\def\presuper#1#2%
\newcommand{\Nn}{\mathbb{N}}
\newcommand{\Rr}{\mathbb{R}}
\newcommand{\Cc}{\mathbb{C}}
\newcommand{\Zz}{\mathbb{Z}}
\newcommand{\B}{\mathcal{B}}
\newcommand{\dd}{\mathbf{d}}	
\newcommand{\D}{\mathcal{D}}	
\newcommand{\DV}{\presuper{\V}{\mathcal{D}}}
\newcommand{\Dd}{\mathbb{D}}	
\newcommand{\Dirac}{\slashed D}	
\newcommand{\Ee}{\mathbb{E}}	
\newcommand{\Exp}{\mathrm{Exp}}	
\newcommand{\F}{\mathcal{F}}
\newcommand{\K}{\mathcal{K}}	
\renewcommand{\H}{\mathcal{H}}	
\renewcommand{\L}{\mathcal{L}}	
\newcommand{\C}{\mathcal{C}}	
\newcommand{\Cl}{\mathrm{Cl}}	
\newcommand{\A}{\mathcal{A}} 	
\newcommand{\U}{\mathcal{U}}  	
\newcommand{\Aroof}{\hat{A}(\nabla)} 
\renewcommand{\S}{\Symb}
\newcommand{\Pp}{\mathbb{P}} 
\newcommand{\Tau}{\mathcal{T}}
\newcommand{\id}{\mathrm{id}}
\newcommand{\SV}{\mathrm{\presuper{\V}{S}}}	
\newcommand{\LV}{\mathrm{\presuper{\V}{\mathcal{L}^1}}} 
\newcommand{\indV}{\mathrm{\presuper{\V}{\ind}}}  
\newcommand{\Tr}{\mathrm{Tr}}			
\newcommand{\TrV}{\mathrm{\presuper{\V}{\Tr}}}	
\newcommand{\TrsV}{\mathrm{\presuper{\V}{\Tr_s}}} 
\newcommand{\etaV}{\mathrm{\presuper{\V}{\eta}}} 
\newcommand{\VOmega}{\mathrm{\presuper{\V}{\Omega}}}
\newcommand{\SigmaV}{\mathrm{\presuper{\V}{\Sigma}}}	
\newcommand{\Spin}{\mathrm{Spin}}
\newcommand{\SO}{\mathrm{SO}}
\newcommand{\SU}{\mathrm{SU}}
\newcommand{\tr}{\mathrm{tr}}			
\renewcommand{\H}{\mathcal{H}} 
\newcommand{\Ff}{\F_{\mathrm{f}}}	
\newcommand{\G}{\mathcal{G}}	
\newcommand{\Gad}{\G^{ad}}	
\newcommand{\Gop}{\mathrm{\G^{(0)}}} 
\newcommand{\Gpull}{\mathrm{\G^{(2)}}}	
\newcommand{\M}{\mathcal{M}}		
\renewcommand{\L}{\mathcal{L}} 
\newcommand{\R}{\mathcal{R}}
\newcommand{\Rk}{\widetilde{\R}}
\renewcommand{\S}{\mathcal{\mathbf{S}}}	
\newcommand{\V}{\mathcal{V}}		
\newcommand{\End}{\mathrm{End}}		
\newcommand{\Hom}{\mathrm{Hom}}		
\renewcommand{\P}{\mathcal{P}}		
\newcommand{\Diff}{\mathrm{Diff}}	
\renewcommand{\O}{\mathcal{O}}
\newcommand{\HC}{\mathrm{HC}}
\newcommand{\HP}{\mathrm{HP}}
\newcommand{\brel}{b^{\mathrm{rel}}}
\newcommand{\relb}{b_{\mathrm{rel}}}
\newcommand{\Brel}{B^{\mathrm{rel}}}
\newcommand{\scal}[2]{\langle #1, #2 \rangle}	
\newcommand{\op}{\operatorname{op}} 		
\newcommand{\ind}{\operatorname{ind}}
\renewcommand{\deg}{\operatorname{deg}}
\newcommand{\iso}{\xrightarrow{\sim}}	
\newcommand{\nablaslash}{\nabla\hspace{-.8em}/\hspace{.3em}}
\newcommand*{\avintA}{\mathop{\ooalign{$\int_{\A^\ast}$\cr$-$}}}
\newcommand{\avintVA}{\mathrm{\presuper{\V}{\avintA}}} 
\newcommand*{\davint}{\mathop{\ooalign{$\displaystyle \int$\cr$-$}}}
\newcommand*{\davintA}{\mathop{\ooalign{$\displaystyle \int_{\A^\ast}$\cr$-$}}}
\newcommand*{\davintM}{\mathop{\ooalign{$\displaystyle \int_M$\cr$-$}}}
\newcommand{\davintVA}{\mathrm{\presuper{\V}{\davintA}}} 
\newcommand{\davintVM}{\mathrm{\presuper{\V}{\davintM}}} 
\newcommand{\davintV}{\mathrm{\presuper{\V}{\davint}}} 
\newtheorem{Thm}{Theorem}[section]
\newtheorem{Lem}[Thm]{Lemma}
\newtheorem{Prop}[Thm]{Proposition}
\newtheorem{Cor}[Thm]{Corollary}
\theoremstyle{definition}
\newtheorem{Def}[Thm]{Definition}
\newtheorem{Exa}[Thm]{Example}
\newtheorem{Rem}[Thm]{Remark}
\begin{document}
\setcounter{page}{1}


\title[Getzler rescaling]{Getzler rescaling via adiabatic deformation and a renormalized local index formula}


\author[Karsten Bohlen, Elmar Schrohe]{Karsten Bohlen, Elmar Schrohe}

\address{$^{1}$ Bergische Universit\"at Wuppertal, Germany}
\email{\textcolor[rgb]{0.00,0.00,0.84}{bohlen@uni-wuppertal.de}}

\address{$^{2}$ Leibniz Universit\"at Hannover, Germany}
\email{\textcolor[rgb]{0.00,0.00,0.84}{schrohe@math.uni-hannover.de}}


\subjclass[2000]{Primary 58J20; Secondary 53C21.}

\keywords{Lie manifold, index theory, groupoid}


\maketitle

\begin{abstract}
We prove a local index theorem of  Atiyah-Singer type for Dirac operators on manifolds with a Lie structure at infinity (\emph{Lie manifolds} for short). 
With the help of a renormalized supertrace, defined on a suitable class of regularizing operators, 
the proof of the index theorem relies on a rescaling technique similar in spirit to Getzler's rescaling. With a given Lie manifold we associate an appropriate integrating Lie groupoid. We then describe the heat kernel of a geometric Dirac operator via a functional calculus with values in the convolution algebra of sections of a rescaled bundle over the adiabatic groupoid.
Finally, we calculate the right coefficient in the heat kernel expansion by deforming the Dirac operator into a polynomial coefficient operator over the rescaled bundle and applying the Lichnerowicz theorem to the fibers of the groupoid and the Lie manifold.
\end{abstract} \maketitle

\section{Introduction}

There are various routes to the Atiyah-Singer index theorem (cf. \cite{AS}, \cite{ASI}, \cite{ASIII}) for the Fredholm index of elliptic operators
on a closed manifold. 
Different proofs in turn have often given rise to profound generalizations, in particular to the index theory of elliptic operators on non-compact manifolds modeled on manifolds with singularities,
manifolds with boundary or manifolds with corners.
A particularly fruitful approach is based on the deformation groupoid (the \emph{tangent groupoid}) introduced by A. Connes, \cite{C}. 
It has given rise to a number of extensions, see e.g. \cite{CLM}, \cite{DLN}, \cite{MN}.  
In the analysis of non-compact manifolds modeling  different types of singular manifolds, Lie groupoids enter naturally as 
models for singular spaces, an observation first made by A. Connes. 
The problem then is to find ellipticity conditions implying the Fredholm property of a suitable class of differential operators acting between appropriate Sobolev spaces as  the most natural condition, namely the pointwise invertibility of the invariantly defined principal symbol, is no longer sufficient. 
If the noncompact manifold is the interior of a compact manifold with corners and the boundary strata are embedded submanifolds of the same dimension, the index theory of foliations initiated by A. Connes and G. Skandalis \cite{CS} provides a basis for the formulation of an index problem. In general, however, the dimension of the strata will vary. 
Moreover, Connes realized that the natural receptacle for the foliation index 
is the $K$-theory of the $C^{\ast}$-algebra of the holonomy groupoid of the foliation.
Similarly, for a manifold with corners, the corresponding generalized analytic index  maps into the $K$-theory of the $C^{\ast}$-algebra of the Lie groupoid which desingularizes the manifold: $\ind_a \colon K(\A^{\ast}( \G ) ) \to K(C_r^{\ast}(\G))$. 
The task therefore is to find a purely topological interpretation of the generalized analytic index. This has been achieved for several cases of singular manifolds, see e.g. \cite{MN} for Lie manifolds.
A significant drawback of this strategy is that the generalized analytic index almost never equals the Fredholm index. In fact, both agree for closed manifolds, since in this case the groupoid under consideration is  the pair groupoid whose  $C^{\ast}$-algebra is the algebra of the compact operators.
In other interesting cases the Fredholm index does not equal the generalized analytic index. 
The more difficult problem is therefore to calculate the \emph{Fredholm index} in topological terms, thus generalizing the Atiyah-Singer index theorem to
a large class of non-compact manifolds.

In this article we consider \emph{manifolds with a Lie structure at infinity} or \emph{Lie manifolds} for short.
While many special instances of these manifolds have been studied in the literature and index theorems of the above type have been proven by different techniques, a general Fredholm index theorem, valid for any Lie manifold, has not yet been obtained. We refer to the excellent survey \cite{Nsurv} for more information. The problem 
lies in the more complicated Fredholm conditions on non-compact manifolds and the fact that the boundary strata  give rise to non-local invariants in the resulting index theorem. Only in the simplest case of asymptotically flat Lie structures is a direct analog of Atiyah-Singer possible, cf. \cite{CN}. 

We will follow the strategy to first establish a \emph{local index theorem} via the heat kernel and then use this local index theorem to prove the Atiyah-Singer index formula.
On the other hand we adhere to the program, started by A. Connes and continued by other authors, of using deformation groupoids in order to extract the Fredholm index
and to express it in topological terms.
The particular technique, however,  is different from the tangent groupoid proof in \cite{C}, because this proof would a priori only calculate the generalized analytic index. 
(Note, however, that at least for manifolds with boundary, the authors in \cite{CLM} have obtained a groupoid version of the Atiyah-Patodi-Singer index theorem by modifying Connes' technique.) 
We describe instead a proof which combines the rescaling technique of Getzler with the adiabatic groupoid.

Early proofs of the local index theorem are due to Atiyah-Bott-Patodi \cite{ABP}, Gilkey \cite{Gilk} and Patodi \cite{P}. 
Our proof is based on Getzler's rescaling proof, see \cite{G} and also \cite{BGV} for a very good exposition. We think that it is possible to use the idea of Getzler of replacing
the heat kernel $k=k(t,x)$ by a \emph{rescaled} heat kernel $u^{\frac{n}{2}} k(ut, u^{\frac{1}{2}} x)$, $0<u\le1$, subsequent calculation of the asymptotic expansion of the rescaled kernel
and application of the Lichnerowicz theorem in the limit $u \to 0^+$, and adapt it to our more general case. Nevertheless, we have 
chosen to apply a deformation groupoid argument. The idea for such an argument in the standard case, using the tangent groupoid, 
can be found already in Quillen's notebooks, \cite{Q}. 
We partly rely on unpublished notes by P. Siegel \cite{S} and the expository account of Getzler's argument by J. Roe \cite{R}.
Siegel gives an account of a rescaling technique using the tangent groupoid, deriving the local index formula for a  smooth closed manifold.
In our more general case one has to confront a number of difficulties which we will explain in the sequel.

\subsection{Overview}
\subsection*{Lie manifolds}
Manifolds with a Lie structure at infinity have been introduced by Ammann, Lauter and Nistor, \cite{ALN}. They can be used to model many types of singular manifolds. 
A Lie manifold is a triple $(M, \A, \V)$, where $M$ is a compact $n$-dimensional manifold with corners and
$\V \subset \Gamma(TM)$ is a Lie algebra of smooth vector fields. 
Moreover, $\V$ is assumed to be a subalgebra of the Lie algebra $\V_b$ of all vector fields tangent to the boundary strata  
and a finitely generated  projective $C^{\infty}(M)$-module.
Also the compact manifold with corners $M$ is thought of as a compactification of a non-compact manifold with a degenerate,
singular metric which is of product type at infinity. 
We denote by $\partial M$ the (stratified) boundary of $M$ and by  $M_0 = M \setminus \partial M$ the interior.
By the Serre-Swan theorem there exists a vector bundle $\A \to M$ such that $\Gamma(\A) \cong \V$.
The bundle $\A$ has the structure of a Lie algebroid. A further piece of information we need is a Lie groupoid $\G \rightrightarrows M$.
It is known that for any Lie structure there is an $s$-connected Lie groupoid $\G$ such that $\A(\G) \cong \A$. For general Lie algebroids, Crainic and Fernandes \cite{CF} obtained computable obstructions for the integrability.  
%
%
%

\subsection*{Assumptions.} 
The Lie manifold $(M, \A, \V)$  is \emph{spin}, i.e. there is a spin structure $S \to M$, cf. \cite{ALN2};
 $\A_{|M_0} \cong TM_0$ and $\G_{|M_0} \cong M_0 \times M_0$ are the tangent bundle
and pair groupoid on the interior, respectively. 
We also assume that  the Lie manifold is  \emph{non-degenerate}, i.e., it is \emph{renormalizable}, which means there exists a renormalized trace, see Definition \ref{VTr}, and we can find
an integrating Lie groupoid which is Hausdorff with a smooth heat kernel. We recall particular examples of such groupoids in the main body of the paper.

\subsection*{Approach}
We let $W$ be a $\Cl(\A)$-module, where $\Cl(\A) \to M$ denotes the bundle of Clifford algebras on the fibers of $\A$.
By $D$ we denote a geometric Dirac operator obtained from an \emph{admissible} connection $\nabla^W$, cf. \cite{LN}.
We choose a positive bilinear form $g = g_{\A}$ on $\A$, called a \emph{compatible metric}, cf. \cite{ALN2}.

The heat kernel $\kappa_t$ of $e^{-t D^2}$ will not be of trace class in general.
We therefore introduce the \emph{renormalized super-trace} $\TrsV$ which relies on a renormalization at infinity. 
Since we later apply the renormalized trace to the heat kernel we need the assumption that the heat kernel is smooth.
In addition, we introduce a suitable class $\S(\G)$ of rapidly decaying functions or distributions over the integrating groupoid and a corresponding class
$\SV(M)$ over the Lie manifold. 
Under our assumptions we prove that  $\SV(M)$ can be identified with $\S(\G)$ via the \emph{vector representation} $\varrho \colon \End(C^{\infty}(\G)) \to \End(C^{\infty}(M))$.
The vector representation is characterized by the equality: $(\varrho(P) f) \circ r = P(f \circ r)$, where $r$ is the range map of the groupoid (a surjective submersion),
$P \in \End(C^{\infty}(\G))$ and $f \in C^{\infty}(M)$, see also \cite{ALN}, \cite{NWX}.
We often make use of the notation $C^{0,\infty}(\G)$ which denotes the class of functions over $\G$ which are continuous on $\G$ and smooth on each $s$-fiber $\G_x = s^{-1}(x), \ x \in M$.

In the classical setting, the tangent groupoid deforms the pair groupoid over the manifold $M$ into the tangent bundle $TM$. In our case we deform the integrating groupoid $\G$, rather than just the pair groupoid, and consider the \emph{adiabatic groupoid}
$\Gad = \G \times (0,1] \cup \A(\G) \times \{0\}$ which deforms $\G$ into the Lie algebroid $\A(\G)$.
Then we perform the rescaling over the adiabatic groupoid adapted to a formal Ansatz for the asymptotic expansion of the heat kernel.
The geometric admissible Dirac operator $D$ on a Lie manifold is realized as the vector representation of a corresponding geometric admissible
Dirac operator $\Dirac$ on the Lie groupoid, see \cite{LN}.
The heat kernel $k_t$ on the Lie groupoid has the vector representation $\kappa_t$ which is the heat kernel on the Lie manifold.
We also rely on \cite{So} and \cite{So2} for the proof of the approximation of the heat kernel on Lie groupoids as described
for complete Riemannian manifolds in \cite{BGV}. We use this approximation and the estimates from \cite{So} to show that
the heat kernel is contained in the Schwartz class $\S(\G)$.
The asymptotic expansion Ansatz is 
$e^{-t \Dirac^2} \sim (4\pi)^{-\frac{n}{2}} t^{-\frac{n}{2}} \sum_{i=0}^{\infty} a_i t^i$. 
We next describe a way to extract the coefficient $a_{n/2}$ in the asymptotic expansion of the heat kernel.
For this we deform $\Dirac$ into a smooth equivariant family of operators on the Lie algebroid $\A(\G)$ associated to $M$.
The rescaling deforms $\Dirac$ in such a way that the Clifford multiplication is taken into account and at the same
time the right coefficient in the Ansatz is extracted.
This is done by a rescaling of the Clifford algebra. The result is that $\Dirac$ is deformed into a polynomial coefficient
operator whose supertrace has the right asymptotics.
We then study the groupoid convolution algebra $\S(\Gad, \Hom(W))$ where $\hom(W) \to M$, given by $\hom(W)_x \cong \hom(W_x, W_x) \cong \Cl(\A_x \otimes \Cc) \otimes \End_{\Cl}(W_x)$, is lifted to an equivariant bundle $\Hom(W) \to \Gad$.
Given a Clifford filtration by degree $\Cl_0 \subseteq \Cl_1 \subseteq \cdots \subseteq \Cl(\A \otimes \Cc)$, we can extend
this filtration to a neighborhood of $\A$ within the adiabatic groupoid $\Gad$. Here we view $\A$ as an embedded boundary stratum of the manifold $\Gad$ and
use the accompanying tubular neighborhood to extend the filtration.
Subsequent to this we introduce an equivariant \emph{rescaled bundle} $\Ee \to \Gad$ extending $\Hom(W)$ such that the sections of this bundle
have a polynomial coefficient expansion, where the coefficients are contained in the sections of the extended Clifford filtration.
We define a functional calculus which realizes the groupoid heat kernel as an element of the convolution algebra of rapidly decaying sections
of the rescaled bundle $\S(\Gad, \Ee)$.
The final calculation of the coefficient in the asymptotic expansion relies on the Lichnerowicz theorem applied to the fibers of the Lie groupoid and, by $\G$-invariance,
to the Lie manifold.

\subsection{The main theorems}
We will first prove the following result: 

\begin{Thm}
Let $(M, \A, \V)$ be an $n$-dimensional non-degenerate Lie manifold, $S \to M$ a spin structure, $\Cl(\A) \to M$ the bundle of Clifford algebras and 
$W \in \Cl(\A)-mod$ a Clifford module.
Given a compatible Riemannian metric $g = g_{\A}$ fix an admissible connection $\nabla^W$ and the corresponding Dirac operator $D = D^W \in \Diff_{\V}^1(M; W)$.
Then we have the formula for the renormalized index
\begin{align}\
& \indV(D):=\lim_{t \to \infty} \TrV_s(e^{-t D^2}) = \davintV \Aroof \wedge \exp F^{W / S} \,d\mu + \etaV(D) \label{indthm}
\end{align}
where $F^{W / S}$ is the twisting curvature and $\Aroof$, for the curvature tensor $R$ obtained from the compatible metric, denotes the $n$-form given by the formal power series
\[
h(R) = \left(-\frac{i}{2\pi}\right)^{\frac{n}{2}} \det\left(\frac{\frac{1}{2} R}{\sinh(\frac{1}{2} R)}\right)^{\frac{1}{2}}.
\]

The function $\etaV$ is the renormalized $\eta$-invariant which is given by the integrated trace defect
\[
\etaV(D) := \frac{1}{2} \int_{0}^{\infty} \TrV_s([D, D e^{-t D^2}]) \,dt. 
\]

\label{Thm:locind}
\end{Thm}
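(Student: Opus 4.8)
The plan is to combine a McKean--Singer argument, adapted to the renormalized supertrace, with a Getzler-type rescaling carried out over the adiabatic groupoid $\Gad$: the rescaling computes the short-time behaviour of $\TrV_s(e^{-tD^2})$ and produces the $\Aroof$-density and the twisting exponential, while the failure of $\TrV_s$ to be cyclic produces the renormalized $\eta$-invariant. First I would lift the data to the integrating groupoid: $D=\varrho(\Dirac)$, and the heat kernel $\kappa_t$ of $e^{-tD^2}$ is the vector representation of the heat kernel $k_t$ of $e^{-t\Dirac^2}$ on $\G$, which by the estimates of \cite{So}, \cite{So2} lies in $\S(\G)$; hence $e^{-tD^2}\in\SV(M)$, on which $\TrV_s$ is defined. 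Since $D$ is odd and commutes with $e^{-tD^2}$, one has $[D,De^{-tD^2}]=2D^2e^{-tD^2}$ as operators, so
\begin{equation*}
\frac{d}{dt}\,\TrV_s\!\bigl(e^{-tD^2}\bigr)\;=\;-\,\TrV_s\!\bigl(D^2e^{-tD^2}\bigr)\;=\;-\tfrac12\,\TrV_s\!\bigl([D,De^{-tD^2}]\bigr).
\end{equation*}
On a closed manifold the right-hand side vanishes; here it is exactly the trace defect of $\TrV_s$, carried by the boundary behaviour. Integrating over $t\in(0,\infty)$ and using that $\indV(D)=\lim_{t\to\infty}\TrV_s(e^{-tD^2})$ is a well-defined limit reduces the theorem to computing the short-time limit $\lim_{t\to 0^+}\TrV_s(e^{-tD^2})$, the remaining half-integral of $\TrV_s([D,De^{-tD^2}])$ assembling into $\etaV(D)$. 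Passing to the graded commutator is what makes the small-$t$ end of this integral convergent, the most singular heat-expansion terms cancelling; convergence at $t=\infty$ is where non-degeneracy enters.

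\emph{Rescaling over $\Gad$.} Next I would invoke the BGV-type approximation of the groupoid heat kernel (\cite{BGV}, \cite{So}), by which $k_t$ has the fibrewise expansion $e^{-t\Dirac^2}\sim(4\pi)^{-n/2}t^{-n/2}\sum_{i\ge 0}a_i t^i$ on the $s$-fibers of $\G$, and then run Getzler's rescaling in the groupoid incarnation: extend the Clifford filtration $\Cl_0\subseteq\cdots\subseteq\Cl(\A\otimes\Cc)$ through the tubular neighbourhood of the embedded stratum $\A\hookrightarrow\Gad$, form the rescaled bundle $\Ee\to\Gad$ extending $\Hom(W)$, and realise $k_t$, through the functional calculus, as an element of $\S(\Gad,\Ee)$. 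The dilation action on $\Gad$ encodes the rescaling $k_t\mapsto u^{n/2}k_{ut}(u^{1/2}\,\cdot\,)$; the substance of this step is that the rescaled family extends smoothly to the face $u=0$ as a section of $\Ee$, and that there $\Dirac^2$ is deformed into a polynomial-coefficient operator on the fibers of $\A(\G)$ which, by the Lichnerowicz formula applied fibrewise, is the generalized harmonic oscillator
\begin{equation*}
H\;=\;-\sum_i\Bigl(\partial_i+\tfrac14\sum_j R_{ij}\,x^j\Bigr)^{\!2}\;+\;F^{W/S},
\end{equation*}
with $R$ the curvature of the compatible metric $g_{\A}$ and $F^{W/S}$ the twisting curvature.

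\emph{Mehler's formula and descent.} Then I would apply Mehler's formula to $e^{-H}$ and take the fibrewise supertrace, which is the Berezin integral and extracts the top Clifford-degree component; this yields precisely the closed $n$-form $\Aroof\wedge\exp F^{W/S}$, the prefactor $(-i/2\pi)^{n/2}$ and the factor $h(R)=\det\bigl(\tfrac{\frac{1}{2} R}{\sinh(\frac{1}{2} R)}\bigr)^{1/2}$ arising from the Gaussian integral. Because the whole construction is $\G$-equivariant, this fibrewise identity descends along the vector representation to the Lie manifold $M$, and integrating the resulting density against the renormalized volume form gives $\lim_{t\to 0^+}\TrV_s(e^{-tD^2})=\davintV\Aroof\wedge\exp F^{W/S}\,d\mu$. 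Feeding this into the integrated McKean--Singer identity above yields \eqref{indthm}, and the same cancellation of singular heat-expansion terms shows that $\etaV(D)$ is finite.

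\emph{Main obstacle.} The hard part will be the rescaling step in the non-compact setting: showing that the rescaled heat kernel extends smoothly to $u=0$ as a section of $\Ee\to\Gad$ \emph{uniformly up to the boundary strata of $M$}, and that the limit $u\to 0^+$ may be interchanged with the renormalized supertrace, which is itself a finite-part regularization at infinity. This requires the uniform heat and Sobolev estimates on the groupoid fibers from \cite{So}, \cite{So2} together with the non-degeneracy hypothesis --- a Hausdorff integrating groupoid with smooth heat kernel --- so that $\TrV_s$ varies continuously along the rescaling deformation and the functional calculus stays inside $\S(\Gad,\Ee)$. A secondary subtlety is the precise bookkeeping of signs and normalizing constants relating the short-time limit, the Berezin integral and the $\eta$-term, which must be matched to \eqref{indthm}.
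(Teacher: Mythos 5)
Your proposal follows essentially the same route as the paper: pull back to the integrating groupoid, show $\kappa_t\in\SV(M)$ via Theorem \ref{Thm:rapdecay} and Theorem \ref{Thm:repr}, run Getzler rescaling over $\Gad$ with the rescaled bundle $\Ee$ so that at $t=0$ the Lichnerowicz formula on the fibers produces the twisted harmonic oscillator, extract the top Clifford degree via the supertrace (Lemma \ref{Lem:Cl2}) to get $\Aroof\wedge\exp F^{W/S}$, and recover $\etaV(D)$ from the integrated McKean--Singer identity $\partial_t\TrsV(e^{-tD^2})=-\tfrac12\TrsV([D,De^{-tD^2}]_s)$. The one point where you gesture rather than argue is precisely where the paper does its real work: you assert that the rescaled family ``extends smoothly to $u=0$ as a section of $\Ee$'' and that the limit commutes with $\TrsV$, whereas the paper makes this concrete by introducing the family $\Dd=(t\Dirac_x)$, observing via the Haar-system scaling that $\Psi_{\Dd}(e^{-x^2})|_{t\neq 0}=t^n k_{t^2}$, and using Lemma \ref{Lem:Cl2} together with the Taylor description of $\DV$ (Proposition \ref{Prop:taylor}) to show $t^{-n}\tr_s(\tilde l_t)$ extends smoothly to $t=0$ --- this is the step that actually justifies the interchange of limit and renormalized integral and supplies the $t^{-n/2}$ normalization. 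So the approach is the same; your write-up leaves that crucial smooth-extension step as a stated obstacle rather than resolving it, which is the only substantive gap relative to the paper's argument.
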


The left hand side of \eqref{indthm} has been shown to converge to the Fredholm index in special cases (cf. \cite{M}, Section 7.8). We discuss a general 
criterion for the equality of these indices below. The \emph{trace defect} $\etaV$ on the right hand side can be calculated in terms of restrictions to the boundary strata (cf. \cite{M}, Section 5.5). We refer to \cite{L1}, \cite{L2}, \cite{LMP2} and \cite{M} for the discussion in the case of $b$-manifolds.
A local index formula in the special case of cusp vector fields has been obtained in \cite{LM} and for the case of a fibered cusp Lie structure
in \cite{LM2} as well as in \cite{LMP} by using the method of deformation of the metrics of b-, cusp and fibered cusp type.
We also refer to \cite{MR} for a $K$-theoretic index theorem on manifolds with fibered cusp structure.
Note that $\indV(D)$ is defined  independent of the Fredholm property of $D$ and might in general not be integer valued.
The following Theorem gives conditions on the integrating Lie groupoid of a given Lie manifold which are sufficient to obtain equality
of the Fredholm index with the renormalized index, whenever the Dirac operator under consideration is Fredholm.

\begin{Thm}\label{1.2}
Let $(M, \A, \V)$ be a non-degenerate spin Lie manifold for which there exists an integrating Lie groupoid $\G \rightrightarrows M$ that is strongly amenable and has polynomial growth.
Then for any admissible geometric Dirac operator $D = D^W$ on $M$ which is \emph{fully elliptic} we have $\ind(D) = \indV(D)$.
\label{Thm:Fh}
\end{Thm}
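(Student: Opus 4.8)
The plan is to compare the renormalized index $\indV(D)$ with the Fredholm index $\ind(D)$ by factoring both through the $b$-trace / renormalized-trace machinery developed earlier. The starting observation is that, since $D$ is fully elliptic, it has a parametrix $Q$ in the calculus such that $1 - QD$ and $1 - DQ$ lie in the regularizing ideal to which $\TrsV$ applies; hence $\ind(D)$ is computed by the McKean--Singer argument, $\ind(D) = \Tr_s(e^{-tD^2})$ for the \emph{ordinary} $L^2$-supertrace, provided $e^{-tD^2}$ is genuinely trace class. The whole point of the two hypotheses --- strong amenability and polynomial growth of the integrating groupoid $\G$ --- is precisely to force this: polynomial growth of the $s$-fibers controls the off-diagonal decay of the heat kernel well enough that $\kappa_t \in \LV(M)$ (the $L^1$-class on which $\TrsV$ reduces to the honest trace), while strong amenability guarantees that the reduced and full $C^\ast$-norms agree so that full ellipticity really does yield Fredholmness on the natural Sobolev scale. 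So the first step is to record, under these two hypotheses, that $e^{-tD^2}$ is trace class for every $t>0$ and that $\TrsV_s(e^{-tD^2}) = \Tr_s(e^{-tD^2})$.

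The second step is then to kill the anomaly term $\etaV(D)$. From its definition $\etaV(D) = \tfrac12\int_0^\infty \TrsV_s([D, De^{-tD^2}])\,dt$, the integrand is a renormalized trace of a commutator; when $\kappa_t$ (and hence $D e^{-tD^2}$) is actually in the trace ideal, the renormalized trace is a genuine trace and therefore vanishes on commutators, so $\TrsV_s([D,De^{-tD^2}]) = 0$ for each $t$. One has to check that the integral converges absolutely so that this pointwise vanishing gives $\etaV(D) = 0$: for small $t$ this is the standard heat-kernel short-time expansion (the bracket is $O(t^{1/2})$ after the supertrace is taken, by the usual Getzler-type degree count already available from the rescaling), and for large $t$ one uses that $D$ is Fredholm so $e^{-tD^2}$ decays exponentially onto $\ker D^2$. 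Combining steps one and two with Theorem \ref{Thm:locind}, the right-hand side of \eqref{indthm} collapses to $\davintV \Aroof \wedge \exp F^{W/S}\,d\mu$, and the left-hand side is $\lim_{t\to\infty}\Tr_s(e^{-tD^2}) = \ind(D)$; hence $\ind(D) = \indV(D)$.

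I expect the genuine obstacle to be step one: showing that polynomial growth plus strong amenability really do imply $\kappa_t$ is of trace class, i.e. that the renormalization introduced in Definition \ref{VTr} becomes trivial on the heat operator. One must produce a Gaussian-type off-diagonal bound for the groupoid heat kernel $k_t$ on each $s$-fiber $\G_x$ --- this is where \cite{So}, \cite{So2} enter --- and then integrate $k_t(\gamma)$ over the fiber against the range measure; polynomial volume growth of balls in $\G_x$ is exactly what makes $\int_{\G_x} e^{-d(\gamma)^2/ct}\,d\mu_x(\gamma)$ finite and in fact makes the $\V$-density integrable up to $\partial M$, so that $\avintVM$ agrees with $\intV$ on the diagonal restriction. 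The strong amenability hypothesis is needed separately to identify the index of the fully elliptic operator with the Fredholm index on $L^2(M_0)$ (via $K$-theory of $C^\ast_r(\G) = C^\ast(\G)$ and the equality of reduced and maximal completions), which is what licenses the McKean--Singer identity in the first place. Once these two analytic inputs are in hand the argument is, as sketched, essentially a matter of assembling Theorem \ref{Thm:locind} with the vanishing of $\etaV$.
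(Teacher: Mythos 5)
Your proposal contains a genuine gap that makes the argument fail, and the gap is not in the places you flag as obstacles but in the structure of the claim itself.

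The central assertion --- that polynomial growth of the groupoid plus strong amenability force $e^{-tD^2}$ to be honestly trace class, so that $\TrsV$ reduces to $\Tr_s$ and consequently $\etaV(D)=0$ --- is false in general, and it is falsified already by the paper's own motivating example. The obstruction to trace-classness is \emph{not} the off-diagonal decay of the heat kernel along the $s$-fibers (which your Gaussian estimate does control); it is the non-integrability of the $\V$-density near $\partial M$ when one restricts the kernel to the diagonal. In the $b$-case of Example~\ref{Exa:Loya}, the $b$-groupoid $\G_b(M)$ is Hausdorff, strongly amenable, and has a polynomial length function (Example~\ref{Exa:cuspidal}(i)), so it satisfies \emph{all} the hypotheses of Theorem~\ref{Thm:Fh}; yet the diagonal density is $\frac{dx}{x}\,dy$ near the boundary and $e^{-t\Delta_g}$ is not trace class. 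The volume growth of the fiber $\G_x$ and the boundary behaviour of the transverse density are independent phenomena, and your ``analytic input one'' conflates them. As a consequence, the trace-defect functional $\TrsV$ genuinely fails to vanish on commutators, and $\etaV(D)$ is in general nonzero --- indeed, for $b$-Dirac operators it recovers the Atiyah--Patodi--Singer $\eta$-invariant, so forcing it to zero would contradict the APS index theorem. Your proposal would prove the stronger (false) statement $\ind(D)=\davintV \Aroof\wedge\exp F^{W/S}\,d\mu$ rather than the claimed $\ind(D)=\indV(D)$, which by Theorem~\ref{Thm:locind} still carries the $\etaV$ term.

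The paper's actual argument does not try to make the heat kernel trace class. Instead it isolates the honestly-traceable ideal $\S_{tr}=\rho^k\,\SV(M;W)$ (elements vanishing to degeneracy order $k$ at $\partial M$), where $\TrV_s$ is a genuine supertrace, and measures the failure of the trace property on the quotient $\S$ by a cyclic $1$-cocycle $\mu(\Rk(a_0),\Rk(a_1))=\TrV_s[a_0,a_1]$. The pair $(\TrV_s,\mu)$ is then a \emph{relative} cyclic cocycle, full ellipticity (via Theorem~\ref{Thm:fullyelliptic}) gives Fredholmness and forces $P_{\ker D}\in\S_{tr}$, and the index is extracted by pairing the relative cocycle with a relative cyclic cycle $\mathrm{EXP}_t(D)$ built from $A_0(t)$, $A_1(t)$. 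The $\eta$-term is not killed but appears as the value of $\mu$ on the boundary piece of the cycle. Strong amenability is used to license Theorem~\ref{Thm:fullyelliptic} (Fredholmness $\Leftrightarrow$ full ellipticity), and polynomial growth of the length function is used to ensure spectral invariance of the Schwartz class (Proposition~\ref{Prop:Schwartz}), so that $P_{\ker D}$ lands in $\S_{tr}$; neither hypothesis is used to upgrade $\TrsV$ to an honest trace on all of $\SV(M;W)$, because that upgrade is simply unavailable.
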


A pseudodifferential operator in the Lie calculus (cf.~\cite{ALN}) is \emph{fully elliptic} whenever its principal symbol and indicial symbol are both pointwise invertible.
In particular any geometric admissible Dirac operator on a spin Lie manifold is fully elliptic if its indicial symbol is pointwise invertible.
Under the above conditions on the integrating Lie groupoid, full ellipticity is equivalent to the operator being Fredholm, see e.g.~\cite{N}. 
We will prove Theorem \ref{1.2} in the final section. There we will also give geometric applications of the index formula to the existence of compatible metrics with positive scalar curvature on
Lie manifolds.


The paper is organized as follows. In the second section we give the definition of the geometric Dirac operators for Lie groupoids
and Lie manifolds. We also prove the Lichnerowicz theorem for the generalized Laplacian on a Lie manifold defined with respect to an admissible connection.
In the third section we study the groupoid heat kernel and its approximation. We introduce a class of rapidly decaying functions
on a Lie groupoid and show that under suitable conditions the heat kernel is contained in this class.
In Section four we define a functional calculus for the convolution algebra over the adiabatic groupoid.
The fifth section contains the definition of the renormalized super trace on Lie manifolds as well as the class of rapidly 
decaying functions on Lie manifolds. Then, in section six we introduce the rescaling and prove the main theorem.
Finally, we discuss Fredholm conditions and the Fredholm index in the last section. As an application of the index theorem we study
obstructions to compatible metrics with positive scalar curvature.

\section{Dirac operators on Lie manifolds}

\label{section2}



Geometric Dirac operators on Lie manifolds are given as vector representations 
of operators on Lie groupoids integrating the Lie structure.  
In this section we will outline some details of their construction, following \cite{ALN}, and state the corresponding Lichnerowicz theorem.  
To this end we will introduce the notion of an \emph{admissible} connection associated with the spin structure.

Denote by $P_{\SO}(\A) \to M$ the bundle of oriented orthonormal frames. This is a principal $\SO(n)$-bundle.
According to \cite{ALN2}, a  \emph{spin structure} over $M$ is a tuple $(P_{\Spin}(\A), \alpha)$, where $P_{\Spin}(\A)$ is a principal $\Spin(n)$-bundle 
and $\alpha \colon P_{\Spin}(\A) \to P_{\SO}(\A)$
is a fiber map over the identity of $M$, compatible with the double covering $\theta \colon \Spin(n) \to \SO(n)$ and the corresponding group actions, i.e., the following diagram commutes
\[
\xymatrix{
\Spin(n) \times P_{\Spin}(\A) \ar[dd]_{\theta \times \alpha} \ar[r] & P_{\Spin}(\A) \ar[dd]_{\alpha} \ar[rd] & \\
& & M ,\\
\SO(n) \times P_{\SO}(\A) \ar[r] & P_{\SO}(\A) \ar[ur] & 
}
\]
where the horizontal arrows are induced by the  group actions.

The spinor bundle is defined as $S := P_{\Spin}(\A) \times_{\sigma_n} \Sigma_n$, where $\sigma_n \colon \Spin(n) \to \SU(\Sigma_n)$
is the complex spinor representation (i.e., the restriction of an odd complex irreducible representation of the Clifford algebra
on $n$-dimensional space). 
Here $\Sigma_n$ denotes an irreducible spin-representation of $\Cl_n(\A) \otimes \Cc$. If $n$ is odd there are two distinct
irreducible representations. For $n$ even, there is one irreducible representation which splits
into two non equivalent sub-representations. See also Lawson and Michelsohn \cite[Section II.3]{LaMi}.

A Clifford module  $W \to M$ is a complex vector bundle together with a positive definite inner product $\scal{\cdot}{\cdot}$, anti-linear in the second component,  
an $\A^\ast$-valued connection $\nabla^W\in \Diff_\V(M,W, W\otimes \A^\ast)$, the space of $\V$-differential operators, acting between $W$ and $W\otimes \A^\ast$, 
 and a linear bundle map $c \colon\A \otimes W \to W, \  X \otimes \varphi \mapsto X \cdot \varphi$
called Clifford multiplication, such that the following holds. 
\begin{enumerate}
\item 
$(X \cdot Y + Y \cdot X + 2 g(X, Y)) \cdot \varphi = 0$ for each $X, Y \in \Gamma(\A), \ \varphi \in \Gamma(W)$.
\item 
$\nabla^W$ is  \emph{metric} 
\[
\partial_X \scal{\psi}{\varphi} = \scal{\nabla_X^W \psi}{\varphi} + \scal{\psi}{\nabla_X^W \varphi}, \ X \in \Gamma(\A), \ \varphi, \psi \in \Gamma(W).
\]
\item 
Clifford multiplication with vectors satisfies
\[
\scal{X \cdot \psi}{\varphi} = \scal{\psi}{X\cdot \varphi}, \ \varphi, \psi \in \Gamma(W), \ X \in \Gamma(\A).
\]

\item 
The connection is \emph{admissible}, i.e.
\[
\nabla_X^W(Y \cdot \varphi) = (\nabla_X Y) \cdot \varphi + Y(\nabla_X^W \varphi), \ X, Y \in \Gamma(\A), \ \varphi \in \Gamma(W).
\]
Here $\nabla$ is the Levi-Civita connection with respect to the compatible metric.
We also assume that $W$ is $\mathbb Z_2$-graded, $W=W^+\oplus W^-$ and the grading is compatible with the Clifford action, i.e. 
\begin{eqnarray*}
c(\Cl(\A)^+ ) W^\pm \subseteq W^\pm &\text{ and }&c(\Cl(\A)^- ) W^\pm \subseteq W^\mp. 
\end{eqnarray*}
\end{enumerate}
\begin{Def}
Let $W \to M$ be a Clifford bundle and $g$ a compatible metric. Then the geometric Dirac operator $D$ is defined by
the composition $D= c \circ (\id \otimes \sharp) \circ \nabla^W$, 
acting on $\Gamma(W)$, 
\[
\xymatrix{
\Gamma(W) \ar[r]^-{\nabla^W} & \Gamma(W \otimes \A^{\ast}) \ar[r]^-{\id \otimes \sharp} & \Gamma(W \otimes \A) \ar[r]^-{c} & \Gamma(W),
}
\]
where $c$ denotes Clifford multiplication and $\sharp$ is the conjugate-linear isomorphism $\A \cong \A^{\ast}$ induced by the metric $g$.
\label{Def:Dirac1}
\end{Def}
Note that $c$ is a $\V$-operator of order $0$ and $\nabla^{W}$ is a $\V$-operator of order $1$, hence $D$ is in $\Diff_{\V}^1(M; W)$.
The principal symbol of $D$ satisfies $\sigma_1(D) \xi = i c(\xi) \in \End(W)$, hence it is invertible for $\xi \not= 0$, and $D$ is elliptic.\medskip

Following \cite{LN}, we next outline the construction of a geometric Dirac operator $\Dirac$ on $\G$ as a $\G$-invariant family of operators on the $s$-fibers $(\G_x)_{x \in M}$
of a given Lie groupoid $\G \rightrightarrows M$ integrating the Lie structure, i.e. $\A(\G) \cong \A$. 
Fix the spinor bundle $S \to M$ as above,  the bundle $\Cl(\A) \to M$  of Clifford algebras 
and a Clifford module $W \to M$.  The Clifford multiplication defines a map $c \colon \Cl(\A) \to \End(W)$.

The Levi-Civita connection on $\G$  is obtained as follows.
Let $X \in \Gamma(\A)$ and let $\tilde{X}$ denote a lift to a $\G$-invariant $s$-vertical vector field (i.e. a smooth section of $T_s \G := \ker ds$).
For $x \in M$ let $g_x$ be the  metric on $\G_x$ induced by the fixed compatible Riemannian metric $g$.
Denote by $\nabla^x \colon \Gamma(T_s \G_x) \to \Gamma(T_x \G_x \otimes T_s^{\ast} \G_x)$ the Levi-Civita
connection associated to $g_x$.
We obtain a smooth and $\G$-invariant family of differential operators 
$\nabla_{\tilde{X}}^x \colon \Gamma(T_s \G_x) \to \Gamma(T_s \G_x)$ that descends to $\nabla_X \in \Diff(\G, \A)$. 

According to  \cite[Proposition 6.1]{LN}, we find a $\G$-invariant connection $\nablaslash^W$ on $\G$, which descends to the previously defined connection $\nabla^W$ on $M$ and  satisfies the following condition of \emph{admissibility}, 
\begin{align}
& \nablaslash_X^W(c(Y) \xi) = c(\nabla_X Y) \xi + c(Y) \nablaslash_X^W(\xi), \ \xi \in \Gamma(r^{\ast} W), \ X, Y \in \Gamma(r^{\ast} \A).
\end{align}

\begin{Def}
Let $W \to M$ be a Clifford module and $\G \rightrightarrows M$ a Lie groupoid.
The geometric Dirac operator $\Dirac^W$ is defined by $\Dirac^W := c \circ (\id \otimes \sharp) \circ \nablaslash^W$ where
$\sharp$ denotes the conjugation isomorphism induced by the fixed compatible metric $g$, $c \in \Hom(W \otimes \A^{\ast})$
Clifford multiplication and $\nablaslash^W \in \Diff(\G; r^{\ast} W, r^{\ast} W \otimes \A^{\ast})$ an admissible connection.
\label{Def:Dirac2}
\end{Def}

It is shown in \cite{LN} that, with the above definition, the Dirac operator on the Lie manifold $M$ is the vector representation of the Dirac operator on the Lie groupoid. We now state the Lichnerowicz theorem comparing the generalized Laplacian $D^2$ of a Dirac operator $D$ on a Lie manifold to the Laplacian $\Delta^W$ associated with the admissible connection: 
$$\Delta^W = - \sum_{i,j} g^{ij} (\nabla_i^W \nabla_j^W - \sum_k \Gamma_{ij}^k \nabla_k^W) . $$
The proof is similar as in \cite{BGV}, but
we provide the details to make the paper more self-contained. 
We start with the following observation: 

\begin{Lem}
The curvature $(\nabla^{W})^2 \in \Lambda^2(\End(W))$ decomposes under the isomorphism
\[
\End(W) \cong \Cl(\A^{\ast}) \otimes \End_{\Cl(\A^{\ast})}(W)
\]
as $R^W + F^{W / S}$, where $R^W$ is the action of the Riemannian curvature $R$ on $W$ given by
\begin{eqnarray}\label{Thm:Lichnerowicz.3}
R^W(e_i, e_j) = \frac{1}{4} \sum_{k,l = 1}^{n} g(R(e_i, e_j) e_k, e_l) c(e^k) c(e^l), 
\end{eqnarray}
for an arbitrary orthonormal frame $\{e_1, \cdots, e_n\}$ of $\A$ and the dual frame $\{e^1, \cdots, e^n\}$ and $F^{W / S}= (\nabla^{W})^2 - R^W \in \Lambda^2(\End_{\Cl(\A^\ast)} W)$. 
\label{Lem:twisting}
\end{Lem}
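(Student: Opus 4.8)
The plan is to prove this decomposition fiberwise and pointwise, reducing the statement to the familiar linear-algebra fact that for a Clifford module over a Euclidean vector space $V$, the endomorphism algebra factors as $\End(W) \cong \Cl(V^\ast)\otimes \End_{\Cl(V^\ast)}(W)$, with $\Cl(V^\ast)$ acting by Clifford multiplication and the second factor being the centralizer. Applying this to $V=\A_x$ at each $x\in M$ and bundling over $M$ gives the stated isomorphism $\End(W)\cong \Cl(\A^\ast)\otimes \End_{\Cl(\A^\ast)}(W)$; this part is purely algebraic and essentially \cite[Section II.3]{LaMi} or \cite{BGV}, so I would only sketch it.

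Next I would analyze how the curvature $(\nabla^W)^2$ sits inside this tensor decomposition. The curvature is a $2$-form with values in $\End(W)$, so at each point it is a sum of elementary tensors in $\Cl(\A^\ast)\otimes \End_{\Cl(\A^\ast)}(W)$. The key structural input is the admissibility condition (item (4) in the definition of a Clifford module), which says $\nabla^W_X(Y\cdot\varphi)=(\nabla_X Y)\cdot\varphi + Y\cdot\nabla^W_X\varphi$ with $\nabla$ the Levi-Civita connection. Taking the commutator $[\nabla^W_X,\nabla^W_Y]$ and using admissibility twice, one computes that $(\nabla^W)^2$ acts on Clifford multiplication operators $c(Z)$ by the derivation $c(R(\cdot,\cdot)Z)$, i.e.\ $[(\nabla^W)^2(X,Y),c(Z)]=c(R(X,Y)Z)$. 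In other words, $(\nabla^W)^2$ maps to the curvature of the Levi-Civita connection under the projection $\End(W)\to\mathfrak{so}(\A)\hookrightarrow\Cl(\A^\ast)$. This pins down the ``Clifford part'' of $(\nabla^W)^2$: it is exactly the image of $R$ under the canonical map $\Lambda^2\A^\ast\cong\mathfrak{so}(\A)\to\Cl(\A^\ast)$, which in an orthonormal frame is precisely the operator $R^W$ given by the formula \eqref{Thm:Lichnerowicz.3}.

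Then I would define $F^{W/S}:=(\nabla^W)^2-R^W$ and show it lies in the centralizer factor $\Lambda^2(\End_{\Cl(\A^\ast)}W)$. This follows because $F^{W/S}$, by construction, commutes with all Clifford multiplications: both $(\nabla^W)^2$ and $R^W$ produce the same commutator $c(R(\cdot,\cdot)Z)$ against $c(Z)$ — for $R^W$ this is the classical identity $[R^W(e_i,e_j),c(e^k)]=\sum_l g(R(e_i,e_j)e_k,e_l)c(e^l)=c(R(e_i,e_j)e^k)$ computed from the explicit quadratic expression in the Clifford generators, using the relation $(X\cdot Y+Y\cdot X)=-2g(X,Y)$ — so the difference has vanishing commutator with the generators of $\Cl(\A^\ast)$ and hence lies in its commutant. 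The grading compatibility in item (4) ensures everything is even. All of this is genuinely local on $M$ and insensitive to the noncompactness or the Lie structure at infinity, since curvature, Clifford multiplication and the admissibility identity are pointwise/algebraic notions; the $\V$-differential-operator framework only enters to guarantee smoothness of the decomposition.

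The main obstacle is not conceptual but organizational: one must be careful that the isomorphism $\End(W)\cong\Cl(\A^\ast)\otimes\End_{\Cl(\A^\ast)}(W)$ is the ``correct'' one (the Clifford factor acting on the left by $c$, with $\Cl(\A^\ast)$ identified with $\Cl(\A)$ via $\sharp$), and to track the signs and factors of $\tfrac14$ in \eqref{Thm:Lichnerowicz.3} so that $R^W$ really is the image of $R$ under $\mathfrak{so}(\A)\to\Cl(\A^\ast)$ and not, say, twice it. Concretely, I expect the fiddly step to be verifying $[(\nabla^W)^2(X,Y),c(Z)]=c(R(X,Y)Z)$ from admissibility and then matching it against the explicit commutator of the quadratic Clifford expression; once those agree on generators the rest is immediate. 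I would present the algebraic lemma about $\End(W)$ as a citation, do the admissibility commutator computation in a couple of lines, and conclude.
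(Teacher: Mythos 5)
Your proposal is correct and follows essentially the same approach as the paper: both use admissibility to compute $[(\nabla^W)^2, c(a)] = c(Ra)$, then verify by explicit Clifford computation that $[R^W, c(a)] = c(Ra)$, and conclude that the difference $F^{W/S}$ lies in the centralizer $\End_{\Cl(\A^\ast)}(W)$. The only genuine difference is presentational: you frame $R^W$ conceptually as the image of the Riemannian curvature under $\mathfrak{so}(\A)\to\Cl(\A^\ast)$ (with the $\tfrac14$ as the spin-representation normalization), which the paper leaves implicit in the formula, but the underlying computation is identical.
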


\begin{proof} Note that  $R^W\in \Lambda^2(\Cl(\A^\ast))$. In order to show that $F^{W/S} \in \Lambda^2(\End_{\Cl(\A^\ast)} W)$ we will prove that the exterior multiplication $\epsilon(F^{W/S})$ 
acting on $\Gamma(W)$ commutes with Clifford multiplication $c(a)$ by an element  $a \in \A^{\ast}$.

Since $\nabla^W$ is admissible we have $[\nabla^W, c(a)] = c(\nabla a)$ where $\nabla$ is the connection obtained
from the fixed compatible metric $g$. 
Hence we get
\[
[(\nabla^{W})^2, c(a)] = [\nabla^W, [\nabla^W, c(a)]] = [\nabla^W, c(\nabla a)] = c(\nabla^2 a) = c(R a).
\]

We will next show that $R^W$ also satisfies the commutator property $[R^W, c(a)] = c(Ra)$. 
For then
\[
[F^{W/S}, c(a)] = [(\nabla^{W})^2, c(a)]  - [R^W, c(a)] = 0
\]
and hence $F^{W/S}$ is  an element of $\Lambda^2(\End_{\Cl(\A^\ast)} W)$. 

We 
identify $\A \cong \A^{\ast}$ via $g$ and write $a = \sum_{k=1}^n e^k(a) e_k$. Then
\begin{eqnarray}\label{R}
R(e_i, e_j) a = \sum_{l=1}^n g(R(e_i, e_j) a, e_l) e^l = \sum_{k,l=1}^n g(R(e_i, e_j) e_k, e_l) e^k(a) e^l.
\end{eqnarray}
It is sufficient to check the commutator property for $a=e^s$, $s=1,\ldots, n$. 
According to \eqref{Thm:Lichnerowicz.3} we then obtain
\begin{align*}
&R^W(e_i, e_j) c(e^s) - c(e^s) R^W(e_i, e_j) = \frac{1}{4} \sum_{k=1}^n g(R(e_i, e_j) e_k, e_s) c(e^k) c(e^s) c(e^s) \\
&-c(e^s) \frac{1}{4} \sum_{k=1}^n g(R(e_i, e_j) e_k, e_s) c(e^k) c(e^s) + \frac{1}{4} \sum_{l=1}^n g(R(e_i, e_j) e_s, e_l) c(e^s) c(e^l) c(e^s) \\
&+ c(e^s) \frac{1}{4} \sum_{l=1}^n g(R(e_i, e_j) e_s, e_l) c(e^s) c(e^l). 
\end{align*}

By Clifford multiplication all four terms take the form $\frac{1}{4} \sum_{l=1}^n g(R(e_i, e_j) e_s, e_l) c(e^l)$.
Together with \eqref{R} we find that  $[R^W(e_i, e_j), c(e^s)] = c(R(e_i,e_j) e^s)$, and this proves the claim.
\end {proof}

\begin{Thm}[Lichnerowicz formula]
Let $(M, \V, \A)$ be a Lie manifold, $S \to M$ a spin structure and $g = g_{\A}$
a compatible Riemannian metric. 
Denote by $\Cl(\A) \to M$ the Clifford bundle and let $W \in \Cl(\A) - mod$ be a Clifford module. 
Let $\nabla^{W}$ be an admissible connection and $D$ the corresponding geometric Dirac operator.
Then we have the formula
\[
D^2 = \Delta^W + c(F^{W / S}) + \frac{\kappa}{4},
\]

where $\kappa$ is the scalar curvature and  $F^{W / S} \in \Lambda^2(\End_{\Cl(\A)} W)$ is the twisting curvature.
\label{Thm:Lichnerowicz}
\end{Thm}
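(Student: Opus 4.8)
The plan is to follow the classical route of \cite{BGV}: reduce the identity to a pointwise computation in a synchronous orthonormal frame of $\A$, the extra work being to check that this reduction is legitimate over the compact manifold with corners $M$.

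First I would fix a point $p\in M$ and choose a local orthonormal frame $e_1,\dots,e_n$ of $\A$ near $p$ that is \emph{synchronous} at $p$, i.e.\ with $(\nabla_{e_i}e_j)(p)=0$; this is possible because $g=g_\A$ is a genuine bundle metric on $\A$ and $\nabla$ its Levi-Civita $\V$-connection. Since $\nabla$ is torsion-free this forces $[e_i,e_j](p)=0$ as well. Writing $D=\sum_i c(e^i)\nabla^W_{e_i}$ and using the admissibility relation $[\nabla^W,c(a)]=c(\nabla a)$ from the proof of Lemma \ref{Lem:twisting}, evaluation at $p$ (where $\nabla e^j$ vanishes) gives $D^2|_p=\sum_{i,j}c(e^i)c(e^j)\,(\nabla^W_{e_i}\nabla^W_{e_j})|_p$.

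Next I would split this sum into its diagonal and off-diagonal parts. The Clifford relation in item (1) gives $c(e^i)^2=-1$, so the $i=j$ terms contribute $-\sum_i(\nabla^W_{e_i}\nabla^W_{e_i})|_p$, which equals $\Delta^W|_p$ since in the orthonormal frame $g^{ij}=\delta^{ij}$ and the connection coefficients of $\nabla$ vanish at $p$. For $i\neq j$ the antisymmetry $c(e^i)c(e^j)=-c(e^j)c(e^i)$ turns the remaining terms into $\tfrac12\sum_{i,j}c(e^i)c(e^j)\bigl(\nabla^W_{e_i}\nabla^W_{e_j}-\nabla^W_{e_j}\nabla^W_{e_i}\bigr)|_p=\tfrac12\sum_{i,j}c(e^i)c(e^j)\,(\nabla^W)^2(e_i,e_j)$, using $[e_i,e_j](p)=0$ once more. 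Inserting the decomposition $(\nabla^W)^2=R^W+F^{W/S}$ of Lemma \ref{Lem:twisting} splits this further into the twisting term $\tfrac12\sum_{i,j}c(e^i)c(e^j)F^{W/S}(e_i,e_j)=c(F^{W/S})$, which is legitimate because $F^{W/S}(e_i,e_j)$ commutes with the Clifford action, and the Riemannian term $\tfrac18\sum_{i,j,k,l}g(R(e_i,e_j)e_k,e_l)\,c(e^i)c(e^j)c(e^k)c(e^l)$ by \eqref{Thm:Lichnerowicz.3}.

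The hard part is the last step: identifying this quartic Clifford expression with $\kappa/4$. Here I would use the first Bianchi identity, which makes the component of the curvature tensor totally antisymmetric in its first three indices vanish, so that all summands with $i,j,k$ pairwise distinct cancel; the surviving summands, in which the third index equals one of the first two, collapse through the Clifford relations to a single trace, producing the Ricci tensor and then the scalar curvature $\kappa$ with the factor $\tfrac14$. This is exactly the computation carried out in \cite[Theorem 3.52]{BGV} (see also \cite[Theorem II.8.8]{LaMi}), and I expect it to go through verbatim. Collecting the three contributions at $p$ yields $D^2|_p=\Delta^W|_p+c(F^{W/S})|_p+\tfrac{\kappa(p)}{4}$; since $p$ is arbitrary and all four terms are $\V$-differential operators, respectively smooth functions, on $M$, the identity holds in $\Diff^2_\V(M;W)$, and by the $\G$-invariance of the constructions in Section \ref{section2} the analogous identity holds fiberwise for $\Dirac$ on $\G$. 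The only point genuinely specific to the present setting is the existence of such a synchronous orthonormal $\A$-frame near every point of $M$ and the torsion-freeness of the $\V$-Levi-Civita connection, both immediate from the definitions; the combinatorial Bianchi reduction is the true obstacle, but it is classical.
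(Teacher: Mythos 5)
Your proof is correct and follows essentially the same route as the paper: write $D=\sum_i c(e^i)\nabla^W_{e_i}$, isolate a second-order piece giving $\Delta^W$, a commutator piece feeding into Lemma~\ref{Lem:twisting}'s decomposition $(\nabla^W)^2=R^W+F^{W/S}$, and then reduce the resulting quartic Clifford expression to $\kappa/4$ via the first Bianchi identity. The one organizational difference is that you normalize at a single point using a synchronous orthonormal frame of $\A$ (so $\nabla e^j$, the Christoffel symbols, and $[e_i,e_j]$ all vanish at $p$ and the first-order terms drop out immediately), whereas the paper carries the Christoffel-symbol and commutator bookkeeping along in a general frame before cancelling; you also cite \cite[Theorem 3.52]{BGV} for the quartic Clifford/Bianchi collapse rather than spelling out the identity \eqref{Thm:Lichnerowicz.4} as the paper does. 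Both variants are legitimate, and your pointwise-synchronous version is if anything a little cleaner, since the paper's use of $[e_i,e_j]=0$ alongside a general $g^{ij}$ conflates the orthonormal and coordinate-frame conventions. Your remark that the needed ingredient specific to this setting --- existence of a synchronous $\A$-orthonormal frame and torsion-freeness of the Levi-Civita $\V$-connection --- is immediate from the definitions is the right thing to flag, and is indeed unproblematic because $\A$ is a genuine vector bundle with a bundle metric and torsion-free metric connection.
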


\begin{proof}
Let $R$ be the Riemannian curvature tensor induced by the fixed compatible metric $g$. 

If $c$ denotes the quantization map $\Lambda \to \Cl$, then $F^{W / S} \in \Lambda^2(\End_{\Cl(\A^\ast)}(W))$ has the image under $c$ 
\[
c(F^{W / S}) = \sum_{i < j} F^{W / S}(e_i, e_j) c(e^i) c(e^j).
\]

The scalar curvature $\kappa$ is given by 
\[
\kappa = \sum_{ik} R_{ikik}, \ R_{ijkl} := g(R(e_k, e_l) e_j, e_i). 
\]

Write $D = \sum_i c(e^i) \nabla_i^W$ for $\nabla_i^W$ the covariant derivative in direction $e_i$. 
This gives
\begin{align*}
D^2 &= \frac{1}{2} \sum_{i,j} [c(e^i), c(e^j)] \nabla_i^W \nabla_j^W + \sum_{i,j} c(e^i) [\nabla_i^W, c(e^j)] \nabla_j^W \\
&+ \frac{1}{2} \sum_{i,j} c(e^i) c(e^j) [\nabla_i^W, \nabla_j^W]. 
\end{align*}

By Clifford multiplication we have $[c(e^i), c(e^j)] = - 2 g^{ij}$, hence the first formula becomes $-\sum_{i,j} g^{ij} \nabla_i^W \nabla_j^W$.
Secondly, by admissibility of $\nabla^W$ it follows $[\nabla_i^W, c(e^j)] = c(\nabla_i e^j)$. 
Write $\nabla_i e^j = - \sum_k \Gamma_{ik}^j e^k$ in terms of Christoffel symbols. 
Then $[\nabla_i^W, c(e^j)] = - \sum_k \Gamma_{ik}^j c(e^k)$. 
Using the symmetry of $\Gamma_{ik}^j$ in $i$ and $k$ rewrite the second term
\begin{align*}
& \sum_{ij} c(e^i) [\nabla_i^W, c(e^j)] \nabla_j^W = \frac{1}{2} \sum_{i,k} [c(e^i), c(e^k)] \sum_{k} \Gamma_{ik}^j \nabla_j^W \\
&= - \sum_{i,k} g^{ik} \sum_{k} \Gamma_{ik}^j \nabla_j^W. 
\end{align*}

For the third term consider the curvature tensor $(\nabla^{W})^2$ and use $[e_i, e_j] = 0, \ i \not= j$ to obtain
\[
[\nabla_i^W, \nabla_j^W] = (\nabla^{W})^2(e_i, e_j). 
\]

Putting everything together $D^2$ is rewritten as 
\[
D^2 = - \sum_{i,j} g^{ij} (\nabla_i^W \nabla_j^W - \sum_k \Gamma_{ij}^k \nabla_k^W) + \frac{1}{2} \sum_{i,j} c(e^i) c(e^j) (\nabla^{W})^2(e_i, e_j). 
\]

Notice that the first term on the right is $\Delta^W$. 
Using Lemma \ref{Lem:twisting}, the second term is rewritten 
\[
\frac{1}{2} \sum_{i,j} c(e^i) c(e^j) (\nabla^{W})^2(e_i, e_j) = -\frac{1}{8} \sum_{ijkl} R_{ijkl} c(e^i) c(e^j) c(e^k) c(e^l) + c(F^{W / S}). 
\]

We next recall the identity  
\begin{eqnarray}\label{Thm:Lichnerowicz.4}
\lefteqn{c(e^i) c(e^j) c(e^k)}\nonumber\\
&=&
\frac{1}{3!} \sum_{\sigma \in S_3} \mathrm{sgn}(\sigma) c(e^{\sigma(i)}) c(e^{\sigma(j)}) c(e^{\sigma(k)}) - \delta^{ij} c(e^k) - \delta^{jk} c(e^i) + \delta^{ki} c(e^j). 
\end{eqnarray}

Rewrite $c(e^i) c(e^j) c(e^k)$ as in \eqref{Thm:Lichnerowicz.4}, 
recall the Bianchi identity $R_{ijkl} + R_{kijl} + R_{jkil} = 0$ and apply it together with \eqref{Thm:Lichnerowicz.4} to obtain
\begin{align*}
\sum_{ijkl} R_{ijkl} c(e^i) c(e^j) c(e^k)c(e^l) &= - \sum_{ijkl} R_{ijkl} (- \delta^{ij} c(e^k) - \delta^{jk} c(e^i) + \delta^{ki} c(e^j)) c(e^l) \\
&= - \sum_{ilk} R_{iikl} c(e^k) c(e^l) - \sum_{ikl} R_{ikkl} c(e^i) c(e^l) + \sum_{jkl} R_{kjkl} c(e^j) c(e^l). 
\end{align*}
Since $R$ is antisymmetric in the first two entries, the first term on the right hand side vanishes. Renaming indices
we obtain
\[
\sum_{ijkl} R_{ijkl} c(e^i) c(e^j) c(e^k) c(e^l) = 2 \sum_{ijk} R_{jkik} c(e^j) c(e^i). 
\]

Since $\sum_{ij} R_{jkik} c(e^j) c(e^i) = - \sum_{i} R_{ikik}$ we obtain the result.
\end{proof}

\section{Heat kernel approximation for Lie groupoids}



The heat kernel of a groupoid Laplacian is a  convolution kernel which has the properties expected of the heat
kernel. We recall the approximation of the heat kernel on Riemannian manifolds from Berline, Getzler and Vergne, \cite{BGV} 
and the corresponding approximation on Lie groupoids. 
Note that if $\G \rightrightarrows M$ is a Lie groupoid over a Lie manifold $(M, \A, \V)$ such that $\A(\G) \cong \A$,  
then an admissible metric $g$ yields a $C^{\infty}$-family of Riemannian metrics $(g_x)_{x \in M}$ on the $s$-fibers $(\G_x)_{x \in M}$.
By the definition of  submersions on manifolds with corners, the $s$-fibers are smooth manifolds without corners, cf. \cite{LN}. 
Additionally, $(\G_x, g_x)$ is a Riemannian manifold with uniformly bounded geometry and we refer to \cite{So2} for a proof of this. 

The class of Lie manifolds $(M, \A, \V)$ we consider in this section will be non-degenerate Lie structures whose  integrating groupoid is Hausdorff. 

We will give examples of such Lie structures below.

\subsection*{The Schwartz class.}
Let us fix for the moment a Lie groupoid $\G \rightrightarrows M$ and a Haar system $\{\mu_x\}_{x \in M}$ on $\G$ such that there is a \emph{length function}, i.e.
a function $\varphi \colon \G \to \overline\Rr_{+}$ which has the following properties:
\begin{enumerate}\renewcommand{\labelenumi}{(\roman{enumi})}
\item  $\varphi(\gamma_1 \gamma_2) \leq \varphi(\gamma_1) + \varphi(\gamma_2)$ for $(\gamma_1, \gamma_2) \in \Gpull$. 

\item $\varphi(\gamma^{-1}) = \varphi(\gamma)^{-1}, \ \gamma \in \G$. 

\item $\varphi$ is proper.

In \cite{LMN} the authors require in addition: 

\item  $\varphi$ is of polynomial growth, i.e. there is a $C > 0$ and $N \in \Nn$ such that for each $r \in \Rr_{+}$
we have $\mu_x(\varphi^{-1}([0,r])) \leq C (r^N + 1)$. 
\end{enumerate}
In the sequel, we will not make use of assumption (iv). It guarantees that, for $k$ sufficiently large, the integral $\int_{\G_x} \frac{1}{(1 + \varphi(\gamma))^k} \,d\mu_x(\gamma)$ remains
uniformly bounded. We will recall below some of the consequences of this additional property. 

A vector field $v$ in $\Gamma(\A(\G)) = \V$ can be regarded as a $\G$-invariant first order differential operator on $\G$ (by lifting $v$ to the $s$-vertical tangent bundle of the groupoid). 
We denote by $(v_1, \cdots, v_l) \mapsto \omega_{\overline{v},i}$ the distributional action $\omega_{\overline{v},i}(f) = v_1 \cdots v_i f v_{i+1} \cdots v_l$ for $f \in C_0(\G)$, the space of continuous functions on $\G$ vanishing at infinity,  considered as a convolution operator $C_c^{0,\infty}(\G)\to C^{0,\infty}(\G)$. 
Define
\[
S_{\varphi}^{k,0}(\G) := \{f \in C_0(\G) : \sup_{\gamma \in \G} |\Pp(\varphi(\gamma)) f(\gamma)| < \infty, \ \Pp \in \Rr[X], \ \mathrm{deg}(\Pp) = k\}. 
\]

Also define the spaces
\[
S_{\varphi}^{k,l}(\G) := \{f \in C_{0}(\G) : \|f\|_{\Pp, l} < \infty, \ \Pp \in \Rr[X], \ \deg(\Pp) = k\}.
\]

Here we denote by $\|\cdot\|_{\Pp, l}$ for  $l \in \Nn$ and a given polynomial $\Pp \in \Rr[X]$ of degree $k$, the seminorms
\[
\|f\|_{\Pp, l} := \sup_{1 \leq i \leq l} \ \sup_{\|v_j\| \leq 1, \ \overline{v} = (v_1, \cdots, v_l) \in \V} \ \sup_{\gamma \in \G} |\Pp(\varphi(\gamma)) \omega_{\overline{v}, i}(f)|.
\]

\begin{Prop}
The spaces $\{S_{\varphi}^{k,l}(\G)\}_{k,l \in \Nn}$ form a dense projective system of Banach spaces. 
\label{Prop:proj}
\end{Prop}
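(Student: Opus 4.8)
The plan is to establish the three claimed structural properties in turn: each $S_\varphi^{k,l}(\G)$ is a Banach space, the family forms a projective system under the obvious inclusions, and the system is dense in the sense that the inclusion maps have dense range (equivalently, each $S_\varphi^{k,l}$ embeds densely into $S_\varphi^{k',l'}$ for $k\ge k'$, $l\ge l'$). First I would fix $k,l$ and verify completeness of $S_\varphi^{k,l}(\G)$: a Cauchy sequence $(f_m)$ in the norm $\|\cdot\|_{\Pp,l}$ (for a fixed polynomial $\Pp$ of degree $k$, say $\Pp(X)=(1+X)^k$, since all such norms are equivalent up to constants) is in particular Cauchy in the sup norm on $\G$, hence converges uniformly to some $f\in C_0(\G)$ because $C_0(\G)$ is complete and $\varphi$ is proper so $\Pp\circ\varphi$ is bounded below on no compact-complement neighborhood — one uses properness of $\varphi$ to see $f$ still vanishes at infinity. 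The $s$-vertical derivatives $\omega_{\overline v,i}(f_m)$ also form Cauchy sequences in $C_0(\G)$ uniformly over the unit ball of vector fields, so they converge; a standard argument (uniform convergence of a sequence of $C^1$ functions along the $s$-fibers together with uniform convergence of their derivatives) shows the limit derivatives agree with the genuine derivatives of $f$, so $f\in S_\varphi^{k,l}$ and $f_m\to f$ in $\|\cdot\|_{\Pp,l}$.

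Next I would record the projective-system structure. For $k\ge k'$ and $l\ge l'$ the seminorms satisfy $\|f\|_{\Pp',l'}\le C\|f\|_{\Pp,l}$ whenever $\deg\Pp'=k'\le k=\deg\Pp$ (dividing by a polynomial of higher degree only decreases the sup, modulo a constant from comparing two fixed polynomials of the same degree), so the identity induces continuous inclusions $S_\varphi^{k,l}(\G)\hookrightarrow S_\varphi^{k',l'}(\G)$; these are compatible with composition, giving a projective system indexed by $(k,l)\in\Nn\times\Nn$ with the product order. The projective limit is then, by definition, the Schwartz-type space $\S(\G)$ referred to earlier in the paper.

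For density I would argue that $C_c^{0,\infty}(\G)$ — continuous, compactly supported functions, smooth along $s$-fibers — is contained in every $S_\varphi^{k,l}(\G)$ (compact support kills all the $\varphi$-weights, properness of $\varphi$ is not even needed here) and is dense in each one: given $f\in S_\varphi^{k,l}$, multiply by a cutoff $\chi_R$ equal to $1$ on $\varphi^{-1}([0,R])$ and supported in $\varphi^{-1}([0,2R])$, chosen $\G$-invariantly smooth along fibers with derivatives bounded uniformly in $R$; then $\chi_R f\in C_c^{0,\infty}(\G)$ and $\|f-\chi_R f\|_{\Pp,l}\to 0$ as $R\to\infty$ because on $\{\varphi>R\}$ the weight $\Pp(\varphi(\gamma))$ is large while the derivatives $\omega_{\overline v,i}$ applied to $(1-\chi_R)f$ are controlled by a higher-degree-weighted seminorm of $f$ (one needs $f$ to lie in $S_\varphi^{k+1,l}$, but by passing between levels of the system this is exactly the statement of density of the system rather than of a single space, which is what is claimed).

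The main obstacle I expect is the analytic bookkeeping in the density step: producing the $\G$-invariant cutoff functions $\chi_R$ with fiber-derivatives bounded uniformly in $R$ requires the properness of $\varphi$ together with some regularization of $\varphi$ (which is only assumed continuous, not smooth) along the $s$-fibers, and then controlling the Leibniz expansion of $\omega_{\overline v,i}((1-\chi_R)f)$ so that every term is dominated by a seminorm of $f$ at a controlled higher level. This is where the interplay between the continuity of $\varphi$ on all of $\G$ and the smoothness only along fibers has to be handled carefully; the completeness and projective-system parts are essentially formal once this is in place.
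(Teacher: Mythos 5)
Your approach matches the paper's own: pass to the equivalent seminorm family $\|\cdot\|_{k,l}$ with weight $(1+\varphi)^k$, read off the continuous inclusions from the monotonicity in $k$ and $l$, and use truncation for density. The paper's proof consists entirely of these observations (it even records the two projectivity inequalities with the orientation reversed, writing $\|\cdot\|_{k_1,l}\le\|\cdot\|_{k_2,l}$ for $k_1\ge k_2$ and similarly in $l$ -- evidently typos, and you state them correctly) and declares density ``immediate.'' You add two things worth keeping. First, you actually verify completeness of each $S_\varphi^{k,l}(\G)$, which the paper asserts without argument; your uniform-convergence-of-fiberwise-derivatives argument is the standard and correct one, though the parenthetical remark about properness of $\varphi$ and vanishing at infinity is garbled -- it is completeness of $C_0(\G)$ under the sup norm, not properness of $\varphi$, that keeps the limit in $C_0(\G)$. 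Second, and more usefully, you correctly flag that truncation does not close at a single level: $\|f-\chi_R f\|_{k,l}\to 0$ requires $(1+\varphi)^k\,\omega_{\overline{v},i}(f)$ to vanish at infinity, which the definition of $S_\varphi^{k,l}(\G)$ as stated (mere boundedness of the weighted derivatives) does not by itself guarantee, while it does hold once $f\in S_\varphi^{k+1,l}(\G)$. This is a genuine subtlety glossed over by the paper; with the definition taken literally, density of the inclusion maps is not obvious (e.g.\ a function behaving like $(1+\varphi)^{-k}$ lies in $S_\varphi^{k,0}(\G)$ yet cannot be approximated in $\|\cdot\|_{k,0}$ by anything in $S_\varphi^{k+1,0}(\G)$), so either one must read a $C_0$-decay condition into the weighted derivatives or interpret ``dense system'' more weakly. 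Your further concern about manufacturing $\G$-invariant, fiberwise-smooth cutoffs from the merely continuous proper length function $\varphi$ is likewise legitimate and left implicit in the paper.
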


\begin{proof}
Apart from  the semi-norm system $\|\cdot\|_{\Pp,l}$ 
parametrized by $\Pp \in \Rr[X]$ and $l \in \Nn$
we  have the equivalent system $\{\|\cdot\|_{k,l}\}_{k,l \in \Nn}$ where 
\[
f \mapsto \|f\|_{k,l} := \sup_{1 \leq i \leq l} \sup_{\|v_j\| \leq 1, \ \overline{v} = (v_1, \cdots, v_l)} \sup_{\gamma \in \G} (1 + \varphi(\gamma))^k |\omega_{\overline{v}, i}(f)|. 
\] 

For the projectivity we observe that if $l$ is fixed and $k_1 \geq k_2$ then $\|\cdot\|_{k_1, l} \leq \|\cdot\|_{k_2, l}$.
Secondly, if $k$ is fixed and $l_1 \geq l_2$ then $\|\cdot\|_{k, l_1} \leq \|\cdot\|_{k, l_2}$. The density of the inclusions
is immediate. 
Hence $\{S_{\varphi}^{l,k}(\G)\}_{(l,k) \in \Nn^2}$ forms a dense projective system of Banach spaces. 
\end{proof}

\begin{Def}
Let $\G \rightrightarrows M$ be a Lie groupoid with length function $\varphi \colon \G \to \Rr_{+}$. We
define the space of rapidly decaying distributions as the dense projective limit
\[
\S_{\varphi}(\G) := \varprojlim_{k,l \in \Nn} S_{\varphi}^{k,l}(\G). 
\]
\label{Def:Schwartz}
\end{Def}

If the length function is of polynomial growth the class is closed under holomorphic functional calculus, see \cite[Theorem 7.5]{LMN}. 

\begin{Prop}
Let $\G \rightrightarrows M$ be a Lie groupoid with polynomial length function $\varphi$. Then $\S_{\varphi}(\G)$ is a $\ast$-subalgebra of $C_{r}^{\ast}(\G)$,
stable under holomorphic functional calculus.
\label{Prop:Schwartz}
\end{Prop}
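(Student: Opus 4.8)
The plan is to show that $\S_\varphi(\G)$ is a $\ast$-subalgebra of $C^\ast_r(\G)$ which is spectrally invariant, i.e.\ stable under holomorphic functional calculus. First I would verify the purely algebraic statements. The $\ast$-operation on $C^\ast_r(\G)$ is $f^\ast(\gamma) = \overline{f(\gamma^{-1})}$, and since $\varphi(\gamma^{-1}) = \varphi(\gamma)$ up to the inversion convention in (ii) — in practice one uses $\varphi(\gamma^{-1})=\varphi(\gamma)$, so the reader should note the typo — the seminorms $\|\cdot\|_{k,l}$ are invariant under $f \mapsto f^\ast$; one only has to check that the vector fields $v_j$, acting by left/right translation, transform into vector fields of the same norm under inversion, which follows from $\G$-invariance of the lift to $T_s\G$. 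For closure under convolution I would use the standard estimate coming from the subadditivity (i) of $\varphi$: writing $(f_1 \ast f_2)(\gamma) = \int_{\G_{s(\gamma)}} f_1(\gamma\eta^{-1}) f_2(\eta)\, d\mu(\eta)$, the bound $(1+\varphi(\gamma))^k \le 2^k (1+\varphi(\gamma\eta^{-1}))^k (1+\varphi(\eta))^k$ reduces the matter to showing that for each fixed $k$ there is $k'$ with $\int_{\G_x}(1+\varphi(\eta))^{-k'}\,d\mu_x(\eta)$ uniformly bounded in $x$; this is exactly where hypothesis (iv), polynomial growth, is needed, so here one does invoke the extra assumption even though it was set aside for the general discussion. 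The derivatives $\omega_{\overline v,i}$ of a convolution distribute onto the two factors by the Leibniz rule and $\G$-invariance of the differential operators, so the seminorm estimates propagate.

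Having established that $\S_\varphi(\G)$ is a Fréchet $\ast$-algebra continuously and densely embedded in $C^\ast_r(\G)$ — density because $C_c^{0,\infty}(\G) \subset \S_\varphi(\G)$ is dense in $C^\ast_r(\G)$ — the remaining and genuinely substantive point is spectral invariance. For this I would cite \cite[Theorem 7.5]{LMN}, as the excerpt already flags, which asserts precisely that a length function of polynomial growth makes the associated rapidly decaying class closed under holomorphic functional calculus; the proof there proceeds by establishing that $\S_\varphi(\G)$ is an inverse limit of Banach algebras satisfying a suitable submultiplicativity together with a Blackadar--Cuntz type estimate, or alternatively via a $C_r^\ast$-algebra version of Jodeit--Schweitzer's differential-seminorm criterion: if $\|ab\|_{k,l} \le C(\|a\|_{k,l}\|b\| + \|a\|\|b\|_{k,l})$ holds with $\|\cdot\|$ the $C^\ast$-norm, then the Fréchet subalgebra is spectrally invariant. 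Thus the logical skeleton of the proof is: (1) $\ast$-closure — routine; (2) convolution-closure with seminorm control — a computation using subadditivity and polynomial growth; (3) the differential-norm inequality relating $\|\cdot\|_{k,l}$ to the $C^\ast$-norm; (4) apply the standard spectral-invariance criterion, or simply quote \cite{LMN}.

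The main obstacle is step (2)–(3): one must carefully bound the seminorms $\|f_1 \ast f_2\|_{k,l}$ in terms of $\|f_i\|_{k',l}$ and the $C^\ast_r$-norms, and in particular produce the mixed estimate $\|f_1\ast f_2\|_{k,l} \lesssim \|f_1\|_{k,l}\,\|f_2\|_{C^\ast_r} + \|f_1\|_{C^\ast_r}\,\|f_2\|_{k,l}$ needed to invoke the differential-seminorm machinery. The subtlety is that the naive pointwise estimate loses control of the $C^\ast_r$-norm, so one has to interpolate between the sup-type estimate and the operator-norm estimate, using that multiplication by a bounded power of $(1+\varphi)^{-1}$ is Hilbert--Schmidt on the fibers (again by polynomial growth). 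Once this estimate is in hand, spectral invariance is immediate from the criterion. Given that \cite{LMN} has done exactly this, the cleanest route for the paper is to reduce everything to that reference after noting that our $\varphi$ and Haar system satisfy all four hypotheses (i)–(iv); I would therefore present steps (1) and (2) in a sentence or two and defer stability under holomorphic functional calculus entirely to \cite[Theorem 7.5]{LMN}.
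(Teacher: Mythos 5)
Your proposal is correct and follows essentially the same route as the paper: the paper's entire proof is a one-line citation to \cite{LMN} (specifically Lemma 7.8 there, which shows each $S_\varphi^{k,l}(\G)$ is closed under holomorphic functional calculus for $k$ large, so the projective limit is too), exactly the reduction you advocate in your final paragraph. Your sketch of the algebraic preliminaries and the differential-seminorm mechanism behind the LMN result is accurate but more detail than the paper records.
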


In fact, it is shown in \cite[Lemma 7.8]{LMN} that $S_\varphi^{k,l}(\G)$ is closed under holomorphic functional calculus in $C^\ast_r(\G)$ for large  $k$, hence so is $\S_\varphi(\G)$.
 
\begin{Exa}
\emph{i)} Let $M$ be a compact manifold with embedded corners and 
$\{\rho_i\}_{i=1}^N$ a set of boundary defining functions.
The boundary of $M$ is stratified by the closed, codimension one hyperfaces $F_i = \{\rho_i = 0\}$: 
 $$\partial M = \bigcup_{1 \leq i \leq N} F_i.$$ 
We consider the Lie structure $\V_b := \{V \in \Gamma^{\infty}(TM) : V \ \text{tangent to} \ F_i, \ 1 \leq i \leq N\}$. 
The Lie algebroid $\A \to M$ is the $b$-tangent bundle such that $\Gamma(\A) \cong \V_b$. 
Following Monthubert \cite{Mont2}, we find a Lie groupoid $\G_b(M)$ integrating $\A$ which is $s$-connected, Hausdorff and amenable: 
We start with the set 
\[
\Gamma_b(M) = \{(x, y, \lambda) \in M \times M \times (\Rr_{+})^N : \rho_i(x) = \lambda_i \rho_i(y), \ 1 \leq i \leq N\}
\]
endowed with the  structure  $(x, y, \lambda) \circ (y, z, \mu) = (x, z, \lambda \cdot \mu), \ (x, y, \lambda)^{-1} = (y,x, \lambda^{-1})$
and $r(x, y, \lambda) = x, \ s(x, y, \lambda) = y,\ u(x) = (x,x,1)$. Here multiplication $\lambda\cdot \mu$ and inversion $\lambda^{-1}$ are componentwise.

We then define the $b$-groupoid $\G_b(M)$ as the $s$-connected component (the union of the connected components of the $s$-fibers of $\Gamma_b(M)$),
i.e. $\G_b(M) := \C_s \Gamma_b(M)$. 
The $b$-groupoid has the polynomial length function $\varphi(x, y, \lambda) = |\ln(\lambda)|$, cf. \cite{LMN}. 

\emph{ii)} Let $M$ be a compact manifold with corners as in the previous example. Fix the Lie structure $\V_{c_l}$ of 
generalized cusp vector fields for $l \geq 2$ given by the local generators in a tubular neighborhood of a boundary hyperface:
$\{x_1^l \partial_{x_1}, \partial_{x_2}, \cdots, \partial_{x_n}\}$. 
Let us recall the construction of the associated Lie groupoid $\G_l(M)$, the so-called generalized cusp groupoid,  given in \cite{LMN}. 
We set
\[
\Gamma_l(M) := \{(x, y, \mu) \in M \times M \times (\Rr_{+})^N : \mu_i \rho_i(x)^l \rho_i(y)^l = \rho_i(x)^l - \rho_i(y)^l\}
\]
with structure $r(x, y, \lambda) = x, \ s(x, y, \lambda) = y, \ u(x) = (x,x,0)$ and $(x, y, \lambda) (y, z, \mu) = (x, z, \lambda + \mu)$. We then define $\G_l(M)$ as the $s$-connected component of $\Gamma_l(M)$.
There exists a homeomorphism $\Theta_l \colon \G_b(M) \to \G_l(M)$ given by $(x, y, \lambda) \mapsto (u, v, \mu)$
as follows.
Assume first that $M$ has only one boundary hyperface, i.e.\ $M$ is a manifold with boundary. The generalization to arbitrarily
many hyperfaces is easy.
We then partition $M$ into $M = \U \cup (M \setminus \U)$ where $\U$ is a collar neighborhood of the boundary.
Then
\[
u = \begin{cases} x, \ x \in M \setminus \U \\
\pi^{-1} \circ \tau_l \circ \pi(x), \ x \in \U, \end{cases}
\]
where $\pi:\U\cong \partial M \times [0,1)$, and
\[
v = \begin{cases} y, \ y \in M \setminus \U \\
\pi^{-1} \circ \tau_l \circ \pi(y), \ y \in \U \end{cases}
\]

Here $\tau_l \colon \Rr_{+} \to \Rr_{+}$ is the continuous and strictly increasing function given by
\[
t \mapsto \begin{cases} 0, \ t = 0, \\\frac{1}{e} (-\ln(t))^{-\frac{1}{l}}, \ t \in \left(0, \frac{1}{e}\right), \\
t, \ t \geq \frac1e \end{cases}.
\]
Set $\mu = \log(\lambda)$ and check with the above that $\mu \rho(u)^l \rho(v)^l = \rho(u)^l - \rho(v)^l$.
This transformation in a tubular neighborhood of the boundary motivates the definition of the cusp groupoid.
The polynomial length function on the cusp groupoid is then obtained by $\varphi_l := \varphi \circ \Theta_l^{-1}$ where $\varphi$
denotes the polynomial length function of the $b$-groupoid and $\Theta_l$ is the homeomorphism constructed above.
We obtain that $\varphi(x, y, \mu) = |\mu|$. 

\emph{iii)} The following example of the fibered cusp calculus is from Mazzeo and Melrose \cite{MM} and we use the formulation and notation for manifolds with iterated fibered corners as given in \cite{DLR}. 
We briefly recall the definition of the associated groupoid and refer to loc. cit. for the details.  
Let $M$ be a manifold with embedded and iterated fibered corners. 
We denote by $\{F_i\}_{i=1}^N$ the boundary hyperfaces of $M$ with boundary defining functions $\rho_i$ and write 
$\pi = (\pi_1, \cdots, \pi_N)$, where $\pi_i \colon F_i \to B_i$ are fibrations; 
$B_i$ is the base, which is 
a compact manifold with corners. 
Define the Lie structure
\[
\V_{\pi} := \{V \in \V_b : V_{|F_i} \ \text{tangent to the fibers} \ \pi_i \colon F_i \to B_i, \ V \rho_i \in \rho_i^2 C^{\infty}(M)\}.
\]
Then $\V_{\pi}$ is a finitely generated $C^{\infty}(M)$-module and a Lie sub-algebra of $\Gamma^{\infty}(TM)$. 
The corresponding groupoid is amenable  \cite[Lemma 4.6]{DLR} and by construction as an open submanifold of a compact Hausdorff space it is Hausdorff; as a set it is defined as
\[
\G_{\pi}(M) := (M_0 \times M_0) \cup \left(\bigcup_{i = 1}^{N} (F_i \times_{\pi_i} T^{\pi} B_i \times_{\pi_i} F_i) \times \Rr\right), 
\]
where $T^{\pi} B_i$ denotes the algebroid of $B_i$.

A different type of manifold with fibered boundary has been considered in \cite{Guil} as well as the corresponding Lie groupoid with length function.

\label{Exa:cuspidal}
\end{Exa}

\subsection*{Heat kernel approximation}

In view of  the right invariance of the action of $\G$ on itself we consider the family of metrics $(g_x)_{x \in M}$ on the $s$-fibers
of the groupoid. We denote the family of induced metric distances by $(\dd_x)_{x \in M}$ and note that
this is a $\G$-invariant family as well, i.e.,  
\[
\dd_{s(\gamma)}(\gamma_1 \gamma, \gamma_2 \gamma) = 
\dd_{r(\gamma)}(\gamma_1, \gamma_2).
\]

Given $\gamma, \eta \in \G_{s(\gamma)}$ we see from this that 
$\dd_{s(\gamma)}(\gamma, \eta) = 
\dd_{r(\gamma)}(\id_{r(\gamma)}, \eta \gamma^{-1})$.
Hence we can define a \emph{reduced metric distance} by $\psi(\gamma) := \dd_{s(\gamma)}(\id_{s(\gamma)}, \gamma)$. 

\begin{Lem}
The reduced metric distance $\psi(\gamma) = \dd_{s(\gamma)}(\id_{s(\gamma)}, \gamma)$ is a length function, i.e. if
$(\gamma, \eta) \in \Gpull$ then $\psi(\gamma \eta) \leq \psi(\gamma) + \psi(\eta)$ and for each $\gamma \in \G$
we have $\psi(\gamma^{-1}) = \psi(\gamma)$. Moreover, it is proper.
\label{Lem:redmetric}
\end{Lem}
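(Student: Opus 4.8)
The plan is to deduce all three properties from the single structural fact recorded just above the statement, namely that for each $\eta\in\G$ the right translation $R_\eta\colon\G_{r(\eta)}\to\G_{s(\eta)}$, $\delta\mapsto\delta\eta$, is an isometry for the fibrewise distances $\dd_{r(\eta)}$, $\dd_{s(\eta)}$, together with the Hopf--Rinow theorem applied to the complete fibres $(\G_x,g_x)$.

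\textbf{Triangle inequality and symmetry.} Let $(\gamma,\eta)\in\Gpull$, so that $s(\gamma)=r(\eta)$ and $\gamma\eta\in\G_{s(\eta)}$. First I would rewrite $\psi(\gamma)=\dd_{s(\gamma)}(\id_{s(\gamma)},\gamma)=\dd_{r(\eta)}(\id_{r(\eta)},\gamma)$ and push it forward by $R_\eta$, which sends $\id_{r(\eta)}\mapsto\eta$ and $\gamma\mapsto\gamma\eta$; this gives $\psi(\gamma)=\dd_{s(\eta)}(\eta,\gamma\eta)$. The ordinary triangle inequality in the metric space $(\G_{s(\eta)},\dd_{s(\eta)})$ then yields $\psi(\gamma\eta)=\dd_{s(\eta)}(\id_{s(\eta)},\gamma\eta)\le\dd_{s(\eta)}(\id_{s(\eta)},\eta)+\dd_{s(\eta)}(\eta,\gamma\eta)=\psi(\eta)+\psi(\gamma)$. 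For the symmetry I apply the isometry $R_\gamma\colon\G_{r(\gamma)}\to\G_{s(\gamma)}$ to $\psi(\gamma^{-1})=\dd_{r(\gamma)}(\id_{r(\gamma)},\gamma^{-1})$; since $R_\gamma$ sends $\id_{r(\gamma)}\mapsto\gamma$ and $\gamma^{-1}\mapsto\id_{s(\gamma)}$, and $\dd$ is symmetric, this equals $\dd_{s(\gamma)}(\gamma,\id_{s(\gamma)})=\psi(\gamma)$.

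\textbf{Properness.} Here I would establish the sharp identity $\psi^{-1}([0,r])=\Exp\bigl(\{v\in\A:\ |v|_g\le r\}\bigr)$ for every $r\ge0$, where $\Exp\colon\A\to\G$ denotes the fibrewise Riemannian exponential map. The inclusion ``$\supseteq$'' is immediate: for $v\in\A_x$ the curve $t\mapsto\Exp(tv)$ is a geodesic in $\G_x$ from $\id_x$ to $\Exp(v)$ of length $|v|_g$, so $\psi(\Exp(v))\le|v|_g$. The reverse inclusion uses Hopf--Rinow: since each $(\G_x,g_x)$ has uniformly bounded geometry and in particular is complete, a point $\gamma$ with $\psi(\gamma)\le r$ is joined to $\id_{s(\gamma)}$ by a minimizing geodesic of length $\dd_{s(\gamma)}(\id_{s(\gamma)},\gamma)\le r$, whose initial vector $v\in T_{\id_{s(\gamma)}}\G_{s(\gamma)}=\A_{s(\gamma)}$ satisfies $|v|_g=\psi(\gamma)\le r$ and $\Exp(v)=\gamma$. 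Next, $\Exp$ is smooth: it is the restriction to the submanifold $\A=T_s\G|_{M}\subset T_s\G$ of the time-one flow of the smooth, $\G$-invariant geodesic spray attached to the $C^\infty$-family of metrics on the $s$-vertical tangent bundle $T_s\G$, and completeness of the fibres makes this time-one flow globally defined (here we use that $T_s\G$ is a smooth manifold because $\G$ is Hausdorff). Since $M$ is compact and $\A\to M$ is a vector bundle, the disc bundle $\{v\in\A:\ |v|_g\le r\}$ is compact, hence so is its continuous image $\psi^{-1}([0,r])$; being compact in the Hausdorff space $\G$ it is in particular closed, which is what properness of $\psi$ demands.

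\textbf{Main obstacle.} The first two properties are essentially bookkeeping once the fibrewise right translations are identified as isometries. The one point requiring genuine care is properness, and the crux there is exactly the identity $\psi^{-1}([0,r])=\Exp(\{|v|_g\le r\})$: it reduces a compactness statement inside $\G$ to the evident compactness of a disc bundle over the compact manifold $M$, but doing so silently consumes both the completeness (equivalently, uniformly bounded geometry, cf.\ \cite{So2}) of the $s$-fibres and the smoothness of $\Exp$, which are the two facts one must be sure to invoke explicitly.
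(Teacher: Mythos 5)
Your proof of the triangle inequality and of the symmetry $\psi(\gamma^{-1})=\psi(\gamma)$ is essentially the paper's own argument: both rest on the $\G$-invariance of the fibrewise distances, i.e.\ on right translations $R_\eta\colon\G_{r(\eta)}\to\G_{s(\eta)}$ being isometries, together with the ordinary triangle inequality in a single fibre. You merely package the bookkeeping slightly differently by pushing $\psi(\gamma)$ forward along $R_\eta$; the content is identical.

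For properness, however, you actually do more than the paper, and in a useful way. The paper's argument is: the fibres $(\G_x,g_x)$ are complete, Hopf--Rinow gives Heine--Borel there, so closed bounded sets are compact, hence $\psi$ is proper. Taken at face value this only yields that $\psi^{-1}([0,r])\cap\G_x$ is compact \emph{for each fixed} $x$; it does not by itself say why the union over $x\in M$ is compact in $\G$. Your reduction $\psi^{-1}([0,r])=\Exp\bigl(\{v\in\A:\ |v|_g\le r\}\bigr)$ closes exactly that gap: the ``$\supseteq$'' inclusion is the elementary estimate $\psi(\Exp v)\le|v|_g$ along the geodesic $t\mapsto\Exp(tv)$, and the ``$\subseteq$'' inclusion is Hopf--Rinow on each complete fibre, producing a minimizing geodesic whose initial velocity lies in the closed $r$-disc of $\A_{s(\gamma)}$. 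Since $M$ is compact and $\A\to M$ is a smooth bundle with continuous fibre norm, the disc bundle is compact, and $\Exp$ is continuous (the time-one map of a smooth, $\G$-invariant geodesic spray on $T_s\G$, globally defined by completeness), so $\psi^{-1}([0,r])$ is a continuous image of a compact set, hence compact. This is a genuinely more careful route: it makes explicit where the compactness of the base $M$ and the smoothness of $\Exp$ enter, both of which the paper's one-line Hopf--Rinow/Montel argument uses only implicitly. Your proof is correct and, on the properness part, arguably more rigorous than the published one.
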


\begin{proof}
First apply the triangle inequality, the $\G$-invariance and the fact that $r(\eta) = s(\gamma)$ by composability to obtain
\begin{align*}
\psi(\gamma \eta) &= \dd_{s(\gamma \eta)}(\id_{s(\gamma \eta)}, \gamma \eta) \\
&\leq \dd_{s(\gamma \eta)}(\id_{s(\gamma \eta)}, \eta) + 
\dd_{s(\gamma \eta)}(\eta, \gamma \eta) \\
&= \dd_{s(\eta)}(\id_{s(\eta)}, \eta) + \dd_{s(\gamma)}(\id_{s(\gamma)}, \gamma). 
\end{align*}

Secondly, by right invariance 
\begin{align*}
\psi(\gamma^{-1}) &= \dd_{s(\gamma^{-1})}(\id_{s(\gamma^{-1})}, \gamma^{-1}) = \dd_{r(\gamma^{-1})}(\gamma, \id_{r(\gamma^{-1})}) \\
&= \dd_{s(\gamma)}(\gamma, \id_{s(\gamma)}) = \psi(\gamma).
\end{align*}
The fibers of the Lie groupoid are complete manifolds. Hence the theorem of Hopf-Rinow implies that the preimages of bounded sets are bounded. Moreover, a complete Riemannian manifold is a Montel space, i.e. bounded and closed subsets are compact. Therefore,   $\psi$ is proper.  
\end{proof}

\begin{Def}
Given a Hausdorff Lie groupoid $\G \rightrightarrows \Gop$, we write $\S(\G) := \S_{\psi}(\G)$, where $\psi$ is
the reduced metric distance of $\G$.
\label{Def:polygrowth}
\end{Def}

On the Lie groupoid $\G \rightrightarrows \Gop = M$ we introduce the heat kernel for the generalized (twisted) Laplacian $\Dirac^2$ depending on the admissible connection $\nablaslash^W$ (see Section \ref{section2} for the definitions and notation).
If $g = g_{\A}$ is a compatible metric induced on $(M, \A, \V)$, then $(M_0, g)$ is also a manifold with bounded geometry, see \cite{ALN2}.
Let us fix an invariant connection $\nabla$ on $\G$ which is obtained from the $\G$-invariant family of connections $(\nabla_x)_{x \in \Gop}$ associated to the
$\G$-invariant family of metrics $(g_x)_{x \in \Gop}$. 
Varying $x \in \Gop$ the family of exponential mappings $\exp_x \colon T \G_x \to \G_x$ yields an exponential
mapping $\Exp \colon \A \to \G$, see \cite[p. 128f]{NWX}. For more details see Section \ref{section:IV}.

Let $r_0>0$ be the bounded injectivity radius. Then the induced exponential mapping $\Exp$ maps $(\A)_{r_0} := \{v \in \A : \|v\|_{g} < r_0\}$
diffeomorphically onto its image $\B_{r_0} := \{\gamma \in \G : \dd_{s(\gamma)}(\gamma, s(\gamma)) < r_0\}$. 
We fix polar coordinates  $(p, \theta)$ on $\A_x$ such that $\dd_x(\Exp(p, \theta), x) = p$. 
Define the radial vector field $\partial_{\R} := \dd_{s(\gamma)}(\gamma, s(\gamma)) d \,\Exp(\partial_p)$, $s(\gamma) = x$, 
and set
$J := \det(d \,\Exp) \circ (\Exp)^{-1}$. 

Consider the pullbacks  $r^{\ast} W \to \G$ and $s^{\ast} W \to \G$ of the Clifford module $W\to M$.
We denote by $\tau(\gamma)(w) \in r^{\ast} W_{\gamma}$ the parallel transport of $w \in W_{s(\gamma)}$ along the path $\Exp(t v), \ t \in [0,1]$, where $\gamma = \Exp\, v \in \B_{r_0}$, for $v \in \A_{s(\gamma)}$. This defines  a map
\[
\tau \colon \{(\gamma, w) : \gamma \in \B_{r_0}, \ w \in W_{s(\gamma)}\} \to r^{\ast} W_{|\B_{r_0}}.
\]
The inverse is given by $\tau(\gamma)^{-1} \colon r^{\ast} W_{|\G_x \cap \B_{r_0}} \to W_x$. 

Denote by $r^{\ast} W \otimes s^{\ast} W^{\ast} \times (0, \infty)$ the pullback of the vector bundle $r^{\ast} W \otimes s^{\ast} W^{\ast} \to \G$
along the projection $\G \times (0,\infty) \to \G$. 

The groupoid heat kernel is a $C^0$-section $Q \in \Gamma^0(r^{\ast} W \otimes s^{\ast} W^{\ast} \times (0,\infty))$ such that  for $Q_t = Q(t,\cdot)$
\begin{enumerate}\renewcommand{\labelenumi}{(\roman{enumi})}
\item the heat equation $(\partial_t + \Dirac^2) Q_t(\gamma) = 0$ holds with
\item the initial condition $\lim_{t \to 0} Q_t \ast u = u$   for each $u \in \Gamma_c^{\infty}(r^{\ast} W \otimes s^{\ast} W^{\ast})$. 
\end{enumerate}
Since the generalized Laplacian $\Dirac^2$ on $\G$ comes from an equivariant family (compare the remarks in Section \ref{section2}) 
the map  $\G_x \times \G_x \ni (\gamma, \eta) \mapsto Q_t(\gamma \eta^{-1})$ defines a heat kernel for $\Dirac_x^2$ on $\G_x$ for each $x \in M$. Since $\G_x$ has bounded geometry  the heat kernel of $\Dirac_x^2$ is unique (cf. \cite[Proposition 2.17]{BGV}).
Hence, by $\G$-invariance,  $Q$ must be unique as well.

We repeat the formal heat kernel approximation from \cite[Section 2.5]{BGV} and, more specifically, from \cite{So}. 

Let $q \colon \B_{r_0} \times (0, \infty) \to \Rr$ denote the Gaussian
\[
q(\gamma, t) := (4 \pi t)^{-\frac{n}{2}} e^{-\frac{\dd_{s(\gamma)}(\gamma, s(\gamma))^2}{4t}}.
\] 

Following \cite{BGV} and \cite{So2} we start with an Ansatz for a formal solution in the form $q\Phi$, where $\Phi$  is a formal power series
\begin{align}
& \Phi(t,\gamma) = \sum_{i=0}^{\infty} t^i \Phi_i(\gamma), \ \Phi_i \in \Gamma^\infty (r^{\ast} W \otimes s^{\ast} W^{\ast}). \label{Phi}
\end{align}
Then
\begin{align}
& (\partial_t + \Dirac^2) (q(\gamma, t) \Phi(t,\gamma) )
= q(t,\gamma) \left(\partial_t + \Dirac^2 + t^{-1} \nabla_{\partial_{\R}} + \frac{\mathcal{L}_{\partial_{\R}} J}{2 t J}\right) \Phi(t,\gamma). \label{happrox}
\end{align}

The assumption that $(\partial_t +\Dirac^2)(q\Phi) =0$ leads to the recursive system of equations
\begin{align*}
& \Phi_0(\Exp \,V) = J^{-\frac{1}{2}} \tau(\Exp \,V), \\
& \Phi_i(\Exp\, V) = -J^{-\frac{1}{2}} \tau \int_0^1 J^{\frac{1}{2}} \tau^{-1}((\Dirac^2 \Phi_{i-1})(\Exp(tV))) t^{i-1}\,dt.
\end{align*}
Then $Q_t$ is obtained as a series
$$Q_t(\gamma) = 
\sum_{k=0}^\infty (-1)^kQ_N^{(k)}(t, \gamma),\quad \gamma \in \G,$$
for sufficiently large $N$, where 
\begin{eqnarray*}
Q_N^{(0)}&=& G_N\\
Q_N^{(k)}(t, \gamma)&=& \int_0^t G_N(t-s,\gamma) R^{(k)}_N(s,\gamma) ds, \quad k\ge 1, \\
G_N(t,\gamma) &=& q(t,\gamma)\chi(\gamma) \sum_{i=0}^Nt^i\Phi_i(\gamma)
\end{eqnarray*} 
for  a cut-off function $\chi$, supported in $\B_{r_0}$ and equal to $1$ on $\B_{r_0/2}$. Furthermore,
\begin{eqnarray*}
R_N^{(1)} &=&(\partial_t+\Dirac^2) G_N \text{ and}\\
R_N^{(k)} (t,\gamma)&=& \int_0^t R_N^{(1)}(t-s,\gamma)R^{(k-1)}_N (s,\gamma)\,ds\\
&=&\int_0^t\int_{\G_{s(\gamma)}}R_N^{(1)}(t-s,\gamma\eta^{-1})R^{(k-1)}_N (s,\eta)\,d\mu_{s(\gamma)}(\eta) ds,\quad k\ge2.
\end{eqnarray*}

\begin{Thm}
Let $\G \rightrightarrows \Gop$ be a Hausdorff Lie groupoid,
and let $\Dirac$ be a geometric Dirac operator adapted to an admissible connection.
Then for $t > 0$ we have $e^{-t \Dirac^2} \in \S(\G)$.  
\label{Thm:rapdecay}
\end{Thm}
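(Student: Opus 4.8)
The plan is to show that the convolution kernel $Q_t$ of $e^{-t\Dirac^2}$ belongs to every Banach space $S_\psi^{k,l}(\G)$, that is, $\|Q_t\|_{\Pp,l}<\infty$ for every $l\in\Nn$ and every polynomial $\Pp\in\Rr[X]$; then $Q_t\in\S(\G)=\varprojlim_{k,l}S_\psi^{k,l}(\G)$ by Definitions~\ref{Def:Schwartz} and~\ref{Def:polygrowth}. The first reduction is, via $\G$-invariance, to pass to the fibrewise heat kernels: $\G_x\times\G_x\ni(\gamma,\eta)\mapsto Q_t(\gamma\eta^{-1})$ is the heat kernel $Q_t^x$ of $\Dirac_x^2$ on the $s$-fibre $\G_x$, so that $Q_t(\gamma)=Q_t^{s(\gamma)}(\gamma,\id_{s(\gamma)})$ with $\dd_{s(\gamma)}(\gamma,\id_{s(\gamma)})=\psi(\gamma)$. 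Hence every estimate for $Q_t^x$ and its derivatives in terms of the geodesic distance on $\G_x$, uniform in $x$, becomes an estimate for $Q_t$ in terms of the length function $\psi$; and properness of $\psi$ (Lemma~\ref{Lem:redmetric}) will give $Q_t\in C_0(\G)$.

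Next I would invoke off-diagonal Gaussian bounds for the fibrewise heat kernel. Since $(\G_x,g_x)_{x\in M}$ is a family of complete manifolds of \emph{uniformly} bounded geometry and $\Dirac^2$ is $\G$-invariant with coefficients of uniformly bounded geometry, the heat kernel estimates of Cheeger--Gromov--Taylor type, in the form used in \cite{So} and \cite[Ch.~2]{BGV}, yield for every $m_1,m_2\in\Nn$ a constant $C=C(t,m_1,m_2)$, independent of $x$, with
\[
\bigl|\nabla^{m_1}_{(1)}\nabla^{m_2}_{(2)}Q_t^x(\gamma,\eta)\bigr|\le C\,e^{-c\,\dd_x(\gamma,\eta)^2/t},\qquad \gamma,\eta\in\G_x,
\]
where $\nabla_{(1)},\nabla_{(2)}$ are covariant differentiations in the two variables and $c>0$ depends only on the geometry. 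Existence, fibrewise smoothness and continuity on $\G$ of such a kernel follow from the parametrix $Q_t=\sum_{k\ge0}(-1)^kQ_N^{(k)}(t,\cdot)$ constructed above: the leading term $G_N=q\,\chi\sum_{i\le N}t^i\Phi_i$ is visibly Gaussian in $\psi$ with smooth coefficients $\Phi_i$ of bounded geometry, while for $N$ large the remainders $Q_N^{(k)}$ with $k\ge1$ gain both regularity and additional positive powers of $t$, so the series converges in each of the seminorms at issue.

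It then remains to express the convolution actions $\omega_{\overline v,i}$ through these fibrewise derivatives. For $v\in\V=\Gamma(\A(\G))$ with $\|v\|\le1$, the operator of convolution by $Q_t$, pre- and post-composed with the corresponding $\G$-invariant $s$-vertical first order operators, again has a convolution kernel, obtained from $Q_t^x$ by covariant differentiation in the range variable (for $v_1,\dots,v_i$) and, after integration by parts against the Haar system using right invariance, in the source variable (for $v_{i+1},\dots,v_l$). Uniform bounded geometry bounds the frame-change and Christoffel data entering these identities independently of $x$ and of the chosen unit vector fields $v_j$, so $|\omega_{\overline v,i}(Q_t)(\gamma)|$ is dominated by a finite sum of the displayed estimates evaluated at $\eta=\id_{s(\gamma)}$, hence by $C\,e^{-c\,\psi(\gamma)^2/t}$. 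As a Gaussian decays faster than $|\Pp(\psi(\gamma))|$ grows, it follows that $\sup_{1\le i\le l}\sup_{\|v_j\|\le1}\sup_{\gamma\in\G}|\Pp(\psi(\gamma))\,\omega_{\overline v,i}(Q_t)(\gamma)|<\infty$, i.e.\ $\|Q_t\|_{\Pp,l}<\infty$, and combined with $Q_t\in C_0(\G)$ this yields $Q_t\in\S(\G)$.

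I expect the main obstacle to be the uniformity in $x\in M$: one must extract from "uniformly bounded geometry of the $s$-fibres" heat kernel and derivative bounds whose constants do not depend on $x$, and then control the convergence of the parametrix series $\sum(-1)^kQ_N^{(k)}$ in each seminorm $\|\cdot\|_{\Pp,l}$ rather than merely in sup-norm; this is exactly where the quantitative estimates of \cite{So} are essential. A secondary technical point is the correct handling of the right (source-variable) actions occurring in $\omega_{\overline v,i}$, as opposed to the more familiar left actions.
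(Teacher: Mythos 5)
Your proposal is correct and follows the same overall strategy as the paper: reduce to the fibrewise heat kernels $Q_t^x$ via $\G$-invariance, exploit uniformly bounded geometry of the $s$-fibres, and show that an off-diagonal decay estimate dominates every seminorm $\|\cdot\|_{\Pp,l}$. The one place where you diverge meaningfully is the source of the off-diagonal decay. You invoke Cheeger--Gromov--Taylor--type Gaussian bounds $|\nabla^{m_1}_{(1)}\nabla^{m_2}_{(2)}Q_t^x(\gamma,\eta)|\le C\,e^{-c\,\dd_x(\gamma,\eta)^2/t}$ as an external package, acknowledging (correctly) that uniformity in $x$ is the crux. The paper instead extracts the decay \emph{directly} from the Duhamel parametrix: the cutoff $\chi$ supported in $\B_{r_0}$ forces $R_N^{(1)}$ to have propagation radius $<r_0$, and an iterated triangle inequality shows $Q_N^{(k)}(t,\gamma)=0$ whenever $kr_0<\psi(\gamma)$; combining this exact vanishing with the uniform estimate of \cite[Lemma~4.8]{So2}, $|V_1\cdots V_i Q_N^{(k)}V_{i+1}\cdots V_l|\le C_1C_2^k(1+t^{N-(n+l)/2})^k\,t^k/k!$, the tail sum starting at $k=I\sim\psi(\gamma)/r_0$ has a $1/I!$ prefactor and yields $e^{-\lambda\psi(\gamma)}$ for every $\lambda>0$. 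This is weaker than your Gaussian bound but fully self-contained and sidesteps having to establish uniform CGT estimates for the whole family of fibres --- precisely the ``main obstacle'' you name. Both routes suffice, since any super-polynomial decay beats $\Pp(\psi)$; the paper's version is the more economical given that the parametrix is already in hand.

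One further small point worth making explicit in your argument: your relation between $\omega_{\overline v,i}(Q_t)$ and mixed covariant derivatives of $Q_t^x$ is implicitly contained in \cite[Lemma~4.8]{So2}, which is stated for exactly the quantities $V_1\cdots V_iQ_N^{(k)}V_{i+1}\cdots V_l$; relying on that reference spares you the integration-by-parts bookkeeping for the source-variable actions $v_{i+1},\dots,v_l$.
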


\begin{proof}
We use the heat kernel approximation and the bounded geometry of the groupoid fibers.
Since the smooth sections of $\A(\G)$ are in one-to-one correspondence with $\G$-invariant vector fields on $T^s\G$, see \cite[p.122]{NWX}, we deduce from \cite[Lemma 4.8]{So2} the following uniform estimate:
\begin{align}
& |V_1\cdots V_i Q_N^{(k)} (t,\cdot) V_{i+1}\cdots V_l(\gamma)|
\le C_1C_2^k \left(1+t^{N-(n+l)/2}\right)^k\frac{t^k}{k!} \label{hineq}
\end{align}
for fixed $t>0$ and $V_1, \ldots ,V_l\in \V$, $1\le i\le l$.
This follows from corresponding uniform estimates for $G_N$ and $R_N^{(k)}$ which 
hold in view of the boundedness of the $\Phi_i$ and the bounded geometry.

Next we  will prove the following off-diagonal estimate of the heat kernel:
Let $V_1, \ldots ,V_l\in \V$, $1\le i\le l$ be given, and $t > 0$ fixed. 
Then for each $\lambda>0$ there exists a $C>0$ such that
\begin{align}
|V_1\cdots V_i Q_t V_{i+1}\cdots V_l(\gamma)| \leq C e^{-\lambda \psi(\gamma)}, \ \gamma \in \G, \ \psi(\gamma) > 2 r_0. \label{estimate}
\end{align}

In fact, the support condition on $\chi$ implies that $G_N(t, \gamma \eta^{-1}) = 0$ for  $\eta \in \G_{s(\gamma)} \setminus \B_{r_0}$. 
Let $I \in \Nn$ such that  $I r_0 <  \dd(\gamma, s(\gamma)) \le (I + 1) r_0$. Then  $Q_N^{(k)}(t, \gamma) = 0$ for $k < I$:
This follows from the fact that $R_N^{(k)}$ can be written in the form 
\begin{eqnarray*}
\lefteqn{R_N^{(k)}(t, \gamma)  }\\
&=& \int_{\Delta_k} \int_{\eta_1, \cdots, \eta_{k-1} \in \G_{s(\gamma)}} R_N^{(1)}(t - t_1, \gamma \eta_1^{-1}) R_N^{(1)}(t_1 - t_2, \eta_1 \eta_2^{-1}) \cdots \\
&&R_N^{(1)}(t_{k-2} - t_{k-1}, \eta_{k-1} \eta_k^{-1}) R_N^{(1)}(\eta_{k-1}, t_{k-1}) \,d\mu_{s(\gamma)}(\eta_1) \cdots \,d\mu_{s(\gamma)}(\eta_{k-1})
\end{eqnarray*}
where $\Delta_k := \{(t_1, \cdots, t_{k-1}) : 0 \leq t_{k-1} \leq \cdots \leq t_1 \leq t\}$. 
There are $k$ factors  in the integral on the right hand side. An application of the iterated triangle inequality:
\[
k r_0 < I r_0 <  \dd_{s(\gamma)}(\gamma, s(\gamma)) \le \dd_{s(\gamma)}(\gamma, \eta_1) + \sum_{j=2}^{k-1} \dd_{s(\gamma)}(\eta_{j-1}, \eta_j)+\dd_{s(\gamma)}(\eta_{k-1},s(\gamma))
\]
implies that at least one of them is $\geq r_0$. Hence $R_N^{(k)}(t, \gamma) = 0$  and therefore $Q_N^{(k)}(t, \gamma) = 0$. 
This fact together with the estimate \eqref{hineq} yields
\begin{align}
&|V_1 \cdots V_i Q_t V_{i+1} \cdots V_l(\gamma)| e^{\lambda \psi(\gamma)} \leq \sum_{k=I}^{\infty} e^{\lambda (I + 1) r_0} C_1 C_2^k (1 + t^{N - \frac{n+l}{2}})^k \frac{t^k}{k!} \nonumber\\
&= e^{\lambda(I + 1) r_0} \frac{C_1 C_2^I (1 + t^{N- \frac{n+l}{2}})^I t^I}{I!} \sum_{k=0}^{\infty} \frac{C_2^k (1 + t^{N - \frac{n+l}{2}})^k t^k I!}{(k+I)!}\label{rapdecay2}
\end{align}

and the right hand side tends to $0$ as $I \to \infty$. 

Now \eqref{hineq} and \eqref{rapdecay2} together with the fact that $e^{-\lambda \psi(\gamma)}$ decays faster than any polynomial in the reduced length function $\psi(\gamma)$ imply that we can estimate the  $\S(\G)$-seminorms of the groupoid heat kernel $Q_t$
\begin{align*}
\|Q_t\|_{k,l} 
&=  \sup_{1 \leq i \leq l} \sup_{\|V_j\| \leq 1} \sup_{\gamma \in \G} (1 + \dd_{s(\gamma)}(s(\gamma), \gamma))^k |V_1 \cdots V_i 
Q_t V_{i+1} \cdots V_l (\gamma) | <\infty.
\end{align*}
%
\end{proof}

\section{Adiabatic deformation and functional calculus}

\label{section:IV}

Given a Lie groupoid $\G \rightrightarrows M$ over a smooth manifold $M$ with corners
we define the \emph{adiabatic groupoid} $\Gad \rightrightarrows M \times I$, where $I$ is either the closed interval
$[0,1]$ or the real numbers $\Rr$. We also write $I^{\ast} := I \setminus \{0\}$.
Formally, the groupoid $\Gad$ is defined as $\Gad =  \A(\G) \times \{0\}\cup \G \times I^{\ast} $. 
The groupoid structure over $t \not= 0$ is defined to be the structure of $\G$ and $I^{\ast}$, where the latter
is simply viewed as the trivial set groupoid with units $I^{\ast}$. Over $t = 0$ the structure is given by that of $\A(\G)$,
where we view $\A(\G) = \bigcup_{x \in M} T_{u(x)} \G_x$ as a bundle with fiberwise defined Lie group structure.  

Note right away the most important special case of this definition: If $M$ is a smooth manifold without boundary or corners, and  $M \times M \rightrightarrows M$ is the pair groupoid,
we recover with $(M \times M)^{ad}$ the \emph{tangent groupoid} due to A. Connes, \cite{C}. 
In the more general situation we are in, where $M$ may have
a boundary or corners, we need an integrating groupoid $\G$ which is more general, in particular longitudinally smooth.

Most important for us is the smooth structure defined on $\Gad$, which we will need later. 
In the special case of the tangent groupoid one fixes a Riemannian metric on $M$ with Levi-Civita connection and defines the topology of $(M \times M)^{ad}$ via a glueing using the exponential mapping $\exp \colon TM \to M$, cf. \cite{C} for the tangent groupoid. 
In the more general case of the adiabatic groupoid at hand we need the so-called \emph{generalized exponential} 
$\Exp \colon \A(\G) \to \G$ from \cite{LR}. This definition together with the \emph{groupoid parametrization} defined in the same paper will be most
convenient for the calculation of the Lichnerowicz formula on the fibers of the adiabatic groupoid in the final section of this article.

\begin{Def}\label{lift} Write $T^s\G :=\ker ds \subseteq T\G$ and let $ \pi: T\G\to \G$ be the canonical projection.  
The right \emph{generalized exponential map} $\Exp^{R} \colon \A(\G) \to \G$ is defined by
\[
\Exp^{R}(V) := 
\pi_{\ker ds \to \G}(V(1))
\]
where $V(1)$ is the solution at $t=1$ to the flow equation $V'(t) = \l_{V(t)} V(t), \ V(0) = V \in \A(\G)$ (provided it exists) and $\l_X: T_{\pi X}\G \to T_XT^s\G$, 
$X\in T^s\G$, is the so-called horizontal lift, which will be defined below. 
\label{Def:Exp}

A left exponential map can be defined analogously with $\ker dr$ in place of $\ker ds$. 
\end{Def}

\begin{Rem}Writing  $r^{\ast} \A(\G) = \{(\gamma,v)\in \G\times \ker ds|_{\G^{(0)}}: 
\pi v = r(\gamma)\}$,  the map $(\gamma,v) \mapsto dR_{\gamma} v$ with the differential $dR_{\gamma}$ of the right multiplication $R_{\gamma}$ by $\gamma \in \G$ furnishes an isomorphism $r^\ast \A(\G)\to \ker ds $. 
In fact,  $dR_{\gamma}$ maps $(\ker ds)_{u(r(\gamma))} $ to $(\ker ds)_{\gamma}$, since 
$ds \circ d R_{\gamma} = d(s \circ R_{\gamma}) = d(s(\gamma)) = 0$. 

The connection $\nabla$ on $\A(\G)$ then lifts to a connection $\tilde \nabla$ on $T^s\G=r^\ast \A(\G)$. 
\end{Rem}

Given a smooth manifold $B$ and a vector bundle $\pi \colon E \to B$  with connection $\nabla^E$, we obtain a splitting $TE = T^{vert}E \oplus T^{hor}E$ of $TE$ with $T^{vert}E = \ker d\pi$. Associated with the decomposition  we have a lift of vectors: for $b\in B$ and $e\in E_b$ we have a lift $$l^E_e: T_bB\to T^{hor}_eE$$
via parallel transport.
We can also lift a curve $\gamma\colon [0,1]\to B$. Let  $\gamma(0)=b_0$ and $v_0\in E_{b_0}$. We obtain the lift $\Gamma:[0,1] \to E$ 
of $\gamma $ by solving the initial value problem  
$$\dot\Gamma(t) = l^E_{\Gamma(t)}(\dot\gamma(t)), \quad \Gamma(0) = v_0.$$   

In order to define the horizontal lift in Definition \ref{lift}, we 
recall that $\A(\G)$ is the restriction of $\ker ds$ to the units, 
i.e. $\A(\G) = u^{\ast} T^s \G$.
As observed in \cite{LR}, the above connection $\tilde \nabla$  defines parallel transport on $T^s\G$ in the same way as on a tangent bundle with affine connection. 
We hence obtain a lift  $l^{T^s\G}$ of curves in $\G$ to curves in $T^{s}\G$. 

The \emph{horizontal lift} $\l^{T^s\G}_X(V)$ 
of a tangent vector $V = \frac{d\gamma(t)}{dt}_{|t=0}$ in $T_{\gamma} \G$ to $X \in T_{\gamma}^s \G$ can be written explicitly in the form 
\[
\l^{T^s\G}_X(V) = \frac{d}{dt} \left[dR_{\gamma(t)} \l_{d R^{-1}_{\gamma}(X)}^{\A(\G)}(s(\gamma(t)))\right]_{t=0},
\]
where we denote by $\l^{\A(\G)}_{\bullet}$ the horizontal lift to the bundle $\A(\G) \to \Gop$ which lifts the curve  $t\mapsto s(\gamma(t))$, cf. \cite[(2.6)]{LR}.

\begin{Rem}The following diagram commutes
\[
\xymatrix{
\A(\G) \ar[rd]_{\pi} \ar[r]^{\Exp^R} & \G \ar[d]_{s} \\
& \Gop
}
\]
i.e. $s(\Exp^R(V)) = \pi(V)$ holds, cf. \cite[(2.9)]{LR}.
\end{Rem}


Finally, we recall the \emph{tubular neighborhood theorem} of the generalized exponential from \cite[Proposition 2.6]{LR}:
There are open neighborhoods $\Gop \subset U_1 \subset \A(\G)$ of the zero section in $\A(\G)$ 
and 
$\Gop \subset U_2 \subset \G$ of the unit space in $\G$ such that $\Exp^R(x) = x$ for each $x \in \Gop$ and $\Exp^R$ induces
a diffeomorphism of $U_1$ and $U_2$. 

From now on we simply write $\Exp := \Exp^R$, where it is understood that all our constructions are right invariant.
The smooth structure of the adiabatic groupoid $\Gad \rightrightarrows \Gop \times I$
is defined by glueing a neighborhood $\O$ of $\A(\G)\times \{0\}$ to $\G\times I^\ast$ via
\[
\O \ni (v, t) \mapsto \begin{cases} v, \ t = 0 \\
(\Exp(-tv), t), \ t > 0. \end{cases}
\]
\medskip

\subsection*{Functional calculus}

Let $(\M, g)$ denote a complete Riemannian manifold which is spin and $W \to \M$ a graded Clifford bundle.
The next goal is to define the functional calculus for rapidly decaying functions with values in the groupoid convolution
algebra.
Note that for
complete Riemannian manifolds there is a spectral theorem for such operators, see also \cite{Ch}, \cite{MS}. 

Given a Dirac operator $D$ acting on smooth sections of the graded Clifford bundle $W$ and $f \in \S(\Rr)$ we define the 
operator $f(D) = \frac{1}{2\pi} \int \hat{f}(t) e^{itD}\,dt$
in the weak sense, i.e. there is an $f(D)$ acting on $L^2(W)$ such that for each $s, \tilde{s} \in L^2$ we have by
\[
\scal{f(D) s}{\tilde{s}} = \frac{1}{2\pi} \int \hat{f}(t) \scal{e^{itD} s}{\tilde{s}}\,dt, \ s, \tilde{s} \in L^2(W). 
\]

Another notion we need to recall here is that of \emph{finite propagation speed}. 

If $D$ is an operator of first order on $\M$ and $\sigma_1$ its principal symbol, we denote by 
\[
\mathbf c(x) := \sup\{\|\sigma_1(x, \xi)\| : \|\xi\| = 1\}
\]

the \emph{propagation speed} of $D$.

\begin{Def}
A first order differential operator $D$ has \emph{finite propagation speed} if there is a constant $C > 0$ such that we have the 
uniform bound $\mathbf c(x) \leq C$. 
\label{Def:finspeed}
\end{Def}

We recall the following theorem due to Chernoff from \cite{Ch}.

\begin{Thm}[P. R. Chernoff, 1973]
Let $D \colon \Gamma(E) \to \Gamma(E)$ be a first order differential operator over a non-compact complete manifold
and assume that $D$ is formally self-adjoint and has finite propagation speed.
Then $D^k$  is essentially self-adjoint for $k \in \Nn_0$.
\label{Thm:Chernoff}
\end{Thm}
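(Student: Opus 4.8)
The plan is to reduce the claim to the statement that the space $\Gamma_c^\infty(E)$ of smooth compactly supported sections is a core for every power of a self-adjoint realization of $D$, and to obtain that realization, together with its unitary group, directly from the finite propagation speed, without assuming essential self-adjointness beforehand. The base case $k=0$ is trivial, and $k=1$ will be recovered along the way.

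First I would record the energy estimate underlying \emph{finite propagation speed}. Suppose $u=u(t,x)$ solves $\partial_t u = iDu$ with $u(0,\cdot)$ supported in a closed set $K$. For $x_0 \in \M$ and $r>0$ set $E(t) := \int_{B(x_0,\,r-Ct)} |u(t,x)|^2\,dx$ for $0\le t < r/C$. Differentiating, the interior contribution $2\,\mathrm{Re}\,\scal{\partial_t u}{u} = 2\,\mathrm{Re}\,\scal{iDu}{u}$ integrates by parts, using that $D$ is formally self-adjoint, to a boundary integral over $\partial B(x_0, r-Ct)$ whose integrand is pointwise bounded by $2\,\mathbf c(x)\,|u|^2 \le 2C\,|u|^2$; this is dominated by the negative contribution $-C\int_{\partial B}|u|^2$ coming from the shrinking of the ball, so $E'(t)\le 0$. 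Hence $u(0,\cdot)=0$ on $B(x_0,r)$ forces $u(t,\cdot)=0$ on $B(x_0, r-Ct)$; in particular the Cauchy problem has at most one solution and $\supp u(t,\cdot) \subseteq \{x: d(x,K)\le C|t|\}$.

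Next I would build a strongly continuous one-parameter group of unitaries $U(t)$ on $L^2(E)$ whose generator restricts to $iD$ on $\Gamma_c^\infty(E)$. Exhaust $\M$ by compact sets $K_1 \subset K_2 \subset \cdots$, each in the interior of the next, with $\bigcup_j K_j = \M$. For each $j$ choose a closed manifold $\M_j$ (for instance the double of a compact manifold with boundary containing $K_{j+1}$) carrying a first order formally self-adjoint operator $D_j$ that coincides with $D$ on a neighborhood of $K_j$ and still has propagation speed $\le C$; on the closed manifold $\M_j$ the operator $D_j$ is essentially self-adjoint on smooth sections (the classical compact case), hence generates a unitary group $U_j(t)$. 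For $\phi \in \Gamma_c^\infty(E)$ with $\supp\phi \subseteq K_m$ and $|t|$ so small that $C|t| < d(K_m, \M\setminus K_{m+1})$, finite propagation speed keeps $U_j(t)\phi$ supported in $K_{m+1}$ and, by the uniqueness just proved, makes it independent of $j$ for $j\ge m+1$; local regularity for the symmetric hyperbolic system $\partial_t u = iDu$ puts it back in $\Gamma_c^\infty(E)$. Denoting this section $U(t)\phi$ and extending by density gives an isometry of $L^2(E)$ for each such $t$, and the relation $U(s+t)=U(s)U(t)$ propagates the definition to all $t\in\Rr$ as a strongly continuous unitary group. By Stone's theorem $U(t)=e^{itB}$ with $B$ self-adjoint, and differentiating $U(t)\phi$ at $t=0$ for $\phi\in\Gamma_c^\infty(E)$ gives $B\phi = D\phi$.

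Finally, since $\phi\in\Gamma_c^\infty(E)$ implies $D^n\phi = B^n\phi\in\Gamma_c^\infty(E)\subseteq L^2(E)$ for all $n$, the subspace $\Gamma_c^\infty(E)$ is dense in $L^2(E)$, contained in $\mathrm{dom}(B^k)$, and invariant under $\{U(t)\}$; by the standard core criterion for self-adjoint operators it is therefore a core for the self-adjoint operator $B^k$, so $D^k$ with domain $\Gamma_c^\infty(E)$ has self-adjoint closure $B^k$, i.e.\ is essentially self-adjoint. The main obstacle is the middle step: producing the unitary group over a general complete manifold without any a priori self-adjointness, which needs the cut-and-double construction, the energy estimate for uniqueness, the input that symmetric first order operators on closed manifolds are essentially self-adjoint, and some care to keep $\mathbf c$ bounded after modifying $D$ near $\M\setminus K_{m+1}$. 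In the situation of interest here, where $D=\Dirac$ is elliptic, the compact-manifold input and the regularity of the wave solutions are automatic, but the finite-speed propagation argument is the substantive content in every case.
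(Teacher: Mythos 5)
The paper does not actually prove this statement; Theorem \ref{Thm:Chernoff} is recalled with a citation to Chernoff's 1973 article and used as a black box, so there is no internal argument to compare against. Your proposal is correct, and it is the standard ``compact-doubling'' route to Chernoff's theorem (the same strategy appears, for instance, in Higson--Roe's \emph{Analytic K-Homology} and in Roe's lecture notes cited in the bibliography). Chernoff's original proof instead invokes Friedrichs' local existence--uniqueness--regularity theory for symmetric hyperbolic systems to build the wave evolution directly on $\M$, patched globally via finite speed and completeness; your version transplants the problem to closed doubles $\M_j$ and uses the closed-manifold case as input. Both routes rely on precisely the three ingredients you isolated: the energy estimate giving finite propagation speed and uniqueness for the Cauchy problem; the resulting strongly continuous unitary group $U(t)=e^{itB}$ with $B\vert_{\Gamma_c^\infty(E)}=D$; and the invariant-core lemma showing that the dense, $U(t)$-invariant subspace $\Gamma_c^\infty(E)\subseteq\bigcap_k\mathrm{dom}(B^k)$ is a core for every $B^k$. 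Two points are worth tightening. First, in constructing $D_j$ on $\M_j$ you need the \emph{operator} (not just the principal symbol) to agree with $D$ near $K_j$, while staying formally self-adjoint with propagation speed $\le C$; this is arranged by symmetrizing a cut-off extension of $D$, and the speed bound is then automatic because $\mathbf c$ depends only on the symbol. Second, your closing remark that smoothness of the wave solutions is ``automatic'' in the elliptic case slightly obscures the fact that it holds with no ellipticity hypothesis on $D$ at all: it is the symmetric hyperbolic character of $\partial_t - iD$, not ellipticity of $D$, that propagates smoothness and gives essential self-adjointness on the closed $\M_j$, and that is precisely the generality in which Chernoff's theorem is stated and needed.
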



\begin{Prop}
Let $D$ be a Dirac operator acting on sections of the graded Clifford bundle $W \to \M$.

\emph{i)} The wave equation $\partial_t s = i D s$ with initial data $s_0 \in \Gamma_c^\infty(W)$ has a unique solution which
preserves the $L^2$-norm.

\emph{ii)} The operator $f(D)$ is well-defined and bounded on $L^2(W)$.

\emph{iii)} The assignment $\S(\Rr) \to \L(L^2(W))$ is a ring-homomorphism such that $\|f(D)\| \leq \|f\|_{\infty}$.

\emph{iv)} If $\hat f$ has compact support, then $f(D)$ is a smoothing operator with finite propagation speed, and $f(D)$ is essentially self-adjoint.

\label{Prop:fctcalc}
\end{Prop}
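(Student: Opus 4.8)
The plan is to prove parts (i)--(iv) in sequence, using Chernoff's theorem (Theorem \ref{Thm:Chernoff}) as the central input and the spectral theorem for the self-adjoint closure of $D$ on the complete manifold $(\M,g)$. First I would observe that, since $\M$ is spin and $g$ is complete, $D$ is a first order formally self-adjoint operator whose principal symbol is Clifford multiplication, $\sigma_1(D)\xi = ic(\xi)$; because $c(\xi)^2 = -\|\xi\|^2$, the propagation speed $\mathbf c(x)$ equals $1$ at every point, so $D$ has finite propagation speed in the sense of Definition \ref{Def:finspeed}. Hence Theorem \ref{Thm:Chernoff} applies and $D^k$ is essentially self-adjoint for all $k$; in particular $D$ itself has a unique self-adjoint extension, which we still denote $D$, and by the spectral theorem the unitary group $e^{itD}$ is well defined on $L^2(W)$.

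For (i), I would set $s(t) := e^{itD}s_0$; since $D$ is self-adjoint, $\{e^{itD}\}_{t\in\Rr}$ is a strongly continuous one-parameter unitary group (Stone's theorem), so $s(t)$ solves $\partial_t s = iDs$ with $s(0) = s_0$ and $\|s(t)\|_{L^2} = \|s_0\|_{L^2}$ for all $t$. Uniqueness follows from the standard energy estimate: if $s(t)$ is a solution with $s(0)=0$, then $\tfrac{d}{dt}\|s(t)\|^2 = 2\mathrm{Re}\scal{\partial_t s}{s} = 2\mathrm{Re}\scal{iDs}{s} = 0$ by self-adjointness, so $s\equiv 0$. For (ii) and (iii), I would define $f(D)$ by the spectral calculus, $f(D) = \int_{\Rr} f(\lambda)\,dE_\lambda$ where $E$ is the spectral measure of $D$; then $f(D)$ is bounded with $\|f(D)\|\le \|f\|_\infty$, the assignment $f\mapsto f(D)$ is a $\ast$-algebra homomorphism $\S(\Rr)\to\L(L^2(W))$, and the Fourier-inversion formula $f(D) = \tfrac{1}{2\pi}\int\hat f(t)e^{itD}\,dt$ holds weakly because $f(\lambda) = \tfrac1{2\pi}\int\hat f(t)e^{it\lambda}\,dt$ pointwise and one may integrate the bounded-operator-valued identity against the spectral measure, matching the weak definition in the text.

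For (iv), suppose $\hat f$ is supported in $[-R,R]$. Finite propagation speed for the wave group --- proved by the classical energy-estimate argument of Chernoff/Cheeger--Gromov--Taylor using $\mathbf c\equiv 1$ --- gives $\supp(e^{itD}u) \subseteq \{x : \dd(x,\supp u)\le |t|\}$, hence $\supp(f(D)u)\subseteq\{x:\dd(x,\supp u)\le R\}$, so $f(D)$ has finite propagation speed. Smoothness of the Schwartz kernel follows from elliptic regularity: for any $k$, $D^k f(D) = g_k(D)$ with $g_k(\lambda) = \lambda^k f(\lambda)\in\S(\Rr)$ bounded on $L^2$, so $f(D)$ maps $L^2$ into $\bigcap_k \mathrm{Dom}(D^k) \subseteq C^\infty(W)$ by local Sobolev embedding applied to the elliptic operator $D$; the same argument applied in both variables gives a smooth kernel. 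Essential self-adjointness of $f(D)$ (when $f$ is real) is immediate from $f(D)$ being bounded and symmetric. The main obstacle I anticipate is the careful justification of finite propagation speed for the wave group on the noncompact complete manifold and its interaction with the functional calculus --- i.e. transferring the pointwise Fourier identity to an operator identity valid weakly on $L^2$ and extracting the support statement --- but this is exactly the content of Chernoff's paper \cite{Ch}, which we may invoke, so the proof reduces to assembling these standard ingredients.
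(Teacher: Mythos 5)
Your proof is correct and assembles the same standard ingredients, but it routes the argument through the spectral theorem whereas the paper (following Roe \cite{R}) works more directly with the wave group and Fourier inversion. Specifically: for (ii)--(iii) you invoke Chernoff's theorem to get the unique self-adjoint extension of $D$, then define $f(D)=\int f(\lambda)\,dE_\lambda$ and read off boundedness, $\|f(D)\|\le\|f\|_\infty$, and the $\ast$-homomorphism property from the spectral calculus, recovering the weak Fourier formula afterward. The paper instead defines $f(D)$ by the weak Fourier integral from the outset, gets boundedness from Cauchy--Schwarz together with $\|e^{itD}\|=1$ and $\hat f\in L^1$, proves the homomorphism property from the group law $e^{itD}=e^{isD}e^{i(t-s)D}$ and the fact that Fourier transform turns multiplication into convolution, and obtains $\|f(D)\|\le\|f\|_\infty$ by a compact-exhaustion argument rather than from the spectral measure. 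Your route is arguably cleaner and more systematic, and it makes the role of Theorem~\ref{Thm:Chernoff} transparent; the paper's route is closer to Roe's self-contained development and avoids leaning on the spectral theorem as a black box. For (iv), both arguments agree in substance: the unit propagation speed of the wave group (computed from $\sigma_1(D)\xi=ic(\xi)$ and $c(\xi)^2=-\|\xi\|^2$), the support estimate when $\supp\hat f\subseteq[-R,R]$, and smoothness of the kernel via elliptic regularity after noting that $D^jf(D)D^k$ is bounded for all $j,k$. Your observation that essential self-adjointness of $f(D)$ (for real $f$) is automatic from boundedness plus symmetry is a small simplification over the paper's appeal to Chernoff at this point. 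No gaps.
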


\begin{proof}
\emph{i)}: See Proposition 7.4 of \cite{R}. 

\emph{ii)}-\emph{iii)}: Use the fact that the Fourier transform maps $\S(\Rr)$  isomorphically into itself and the Cauchy-Schwarz inequality.
The homomorphism property follows from the linearity of the Fourier transform, the fact that pointwise multiplication is converted into 
convolution and the identity $e^{itD} = e^{i s D} e^{i (t - s) D}$, which follows from the uniqueness of solutions of the wave equation. The inequality
follows by a reduction to the case of compact manifolds.
We refer to the proof of Proposition 9.20 in \cite{R}. 

\emph{iv)}: For the finite propagation speed property we refer to Section 3.C in \cite{Ch} as well as the proof of Proposition 7.20 in  \cite{R}. See also \cite{R2}.
The essential self-adjointness follows from the quoted theorem of Chernoff, Theorem \ref{Thm:Chernoff}. 
\end{proof}

The following theorem is the generalization of the theorem for the tangent groupoid given by Siegel \cite{S}, Corollary 2
and Roe \cite{R}, Proposition 5.30, 5.31.


Recall first the following notions. An \emph{equivariant bundle} $\Ee \to \G$ over a Lie groupoid $\G$ is a vector bundle
such that $R_{\gamma} \colon \G_{r(\gamma)} \to \G_{s(\gamma)}$ induces a vector bundle isomorphism $R_{\gamma}^{\ast} \colon \Ee_{\G_{r(\gamma)}} \to \Ee_{\G_{s(\gamma)}}$. 
Given a vector bundle $E \to M$ we define $\Hom(E)$ to be the pullback bundle $r^{\ast}(E) \otimes s^{\ast}(E^{\ast})$ obtaining
a bundle $\Hom(E) \to \G$ over $\G$. We denote accordingly the bundle $\Hom(W) \to \Gad$ which is the equivariant lifting of the homomorphism bundle to the adiabatic groupoid. 
Denote by $\P$ the set of functions in the Schwartz class $\S(\Rr)$ which have compactly supported Fourier transform.

\begin{Thm}
Let $(M, \A, \V)$ be a Lie manifold with Clifford module $W \to M$ and $\G \rightrightarrows M$ a Lie groupoid such that $\A(\G) \cong \A$.
Denote by $\Dd := (\Dirac_{x, t})_{(x, t) \in M \times I}$ an equivariant family of geometric Dirac operators associated to $\nablaslash^W$ on $\Gad$.
Then there exists a ring homomorphism 
\[
\Psi_{\Dd} \colon C_0(\Rr) \to C_r^{\ast}(\Gad, \Hom(W))
\]
such that the regular representation
\[
\pi_{x, t} \colon C_c^{0,\infty}(\Gad, \Hom(W)) \to \L(L^2(\G_{(x,t)}^{ad}))
\]
fulfills the identity 
\[
 \pi_{x,t}(\Psi_{\Dd}(f)) = f(\Dirac_{x,t}),\quad f \in \P. 
\]
\label{Thm:fctcalc}
\end{Thm}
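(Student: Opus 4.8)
The plan is to build the homomorphism $\Psi_{\Dd}$ fiberwise first, using the functional calculus of Proposition \ref{Prop:fctcalc} on each $s$-fiber of $\Gad$, and then to show that for $f \in \P$ the resulting family of operators is actually given by convolution against a smooth, compactly supported section of $\Hom(W) \to \Gad$, so that it defines an element of the convolution algebra; density of $\P$ in $C_0(\Rr)$ and the norm bound $\|f(\Dirac_{x,t})\| \le \|f\|_\infty$ then extend $\Psi_{\Dd}$ continuously to all of $C_0(\Rr)$ with values in the $C^\ast$-completion $C_r^\ast(\Gad, \Hom(W))$. Concretely, for each unit $(x,t) \in M \times I$ the operator $\Dirac_{x,t}$ is a Dirac operator on the complete manifold $\G^{ad}_{(x,t)}$ (for $t \ne 0$ a copy of $\G_x$ with its bounded-geometry metric $g_x$, for $t=0$ the abelian Lie group $\A(\G)_x \cong T_{u(x)}\G_x$ with its flat metric), so Proposition \ref{Prop:fctcalc} applies: $f(\Dirac_{x,t})$ is well-defined, bounded, self-adjoint, and the assignment $f \mapsto f(\Dirac_{x,t})$ is a ring homomorphism with $\|f(\Dirac_{x,t})\| \le \|f\|_\infty$. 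By $\G$-invariance of the family $\Dd$, the operators $f(\Dirac_{x,t})$ intertwine the right translations $R_\gamma$, so the corresponding Schwartz kernels patch together into a single $R$-invariant kernel on $\Gad$.

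The key step is the finite-propagation-speed argument. For $f \in \P$ write $f(\Dirac_{x,t}) = \frac{1}{2\pi}\int \hat f(\xi)\, e^{i\xi \Dirac_{x,t}}\,d\xi$; since $\hat f$ is supported in some interval $[-R,R]$ and $\Dirac_{x,t}$ has propagation speed bounded by a constant independent of $(x,t)$ (this is where the bounded geometry of the fibers and the $\G$-invariance of $\Dd$ enter, via Definition \ref{Def:finspeed}), the Schwartz kernel of $e^{i\xi\Dirac_{x,t}}$ is supported in the set of $(\gamma,\eta)$ with $\dd_x(\gamma,\eta) \le |\xi| \le R$. Hence the kernel $K_{x,t}(\gamma,\eta)$ of $f(\Dirac_{x,t})$ is supported where $\dd_x(\gamma,\eta) \le R$, i.e. the associated $R$-invariant convolution kernel $k_f$ on $\Gad$ is supported in $\{\psi \le R\}$, which is compact by properness of the reduced length function (Lemma \ref{Lem:redmetric}). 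By Proposition \ref{Prop:fctcalc}(iv) $f(\Dirac_{x,t})$ is smoothing, so $K_{x,t}$ is smooth along the fibers; smoothness transverse to the fibers, including across $t=0$, follows from the smoothness of the adiabatic structure (built from the generalized exponential $\Exp$ of \cite{LR}) together with smooth dependence of the wave operator $e^{i\xi\Dirac_{x,t}}$ on $(x,t)$, which one obtains by differentiating the wave equation in parameters and using uniqueness of solutions. Thus $k_f \in C_c^{0,\infty}(\Gad,\Hom(W))$ and $\pi_{x,t}(k_f) = f(\Dirac_{x,t})$, giving the stated identity; the ring-homomorphism property of $\Psi_{\Dd}$ on $\P$ is inherited fiberwise from Proposition \ref{Prop:fctcalc}(iii), using faithfulness of the family $(\pi_{x,t})$ on the reduced $C^\ast$-algebra.

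To finish, I would extend from $\P$ to $C_0(\Rr)$: the norm estimate $\|\pi_{x,t}(k_f)\| = \|f(\Dirac_{x,t})\| \le \|f\|_\infty$ holds for all $(x,t)$, so $\|k_f\|_{C_r^\ast(\Gad,\Hom(W))} = \sup_{(x,t)}\|f(\Dirac_{x,t})\| \le \|f\|_\infty$; since $\P$ is dense in $C_0(\Rr)$ (the Fourier transform carries $C_0(\Rr)$-limits of compactly supported smooth functions densely), the map $f \mapsto k_f$ extends uniquely to a norm-decreasing ring homomorphism $\Psi_{\Dd}\colon C_0(\Rr) \to C_r^\ast(\Gad,\Hom(W))$, and for $f \in \P$ it restricts to the convolution kernel just constructed, so $\pi_{x,t}(\Psi_{\Dd}(f)) = f(\Dirac_{x,t})$.

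The main obstacle I expect is establishing the smoothness of the Schwartz kernel $K_{x,t}(\gamma,\eta)$ jointly in the fiber variables and in the deformation parameter $(x,t)$, in particular the continuity/smoothness across $t = 0$ where the geometry of the fibers degenerates from $\G_x$ to the nilpotent (here abelian) tangent group $\A(\G)_x$. Handling this requires care with the adiabatic rescaling of the metric — $g_x$ on $\G\times\{t\}$ rescaled by $t$ — so that the wave operators $e^{i\xi\Dirac_{x,t}}$ and their kernels remain in a uniformly controlled class as $t \to 0$; the explicit form of the gluing via $\Exp$ in the definition of the smooth structure on $\Gad$, together with finite propagation speed making everything local, is what makes this tractable, but verifying the estimates uniformly is the technical heart of the argument.
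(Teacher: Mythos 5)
Your proposal follows essentially the same route as the paper: build $f(\Dirac_{x,t})$ fiberwise via Proposition \ref{Prop:fctcalc}, use finite propagation speed and compactly supported Fourier transform (together with compactness of $M$ and properness of the reduced length function) to conclude the reduced kernel $k^f$ is compactly supported, observe that the fiberwise homomorphism property gives the ring homomorphism and the regular-representation identity, and extend from $\P$ to $C_0(\Rr)$ by density and the uniform norm bound $\|f(\Dirac_{x,t})\| \le \|f\|_\infty$. The one place you go beyond the paper is in flagging the transversal regularity of $k^f$ across $t=0$; since the target class is $C_c^{0,\infty}(\Gad,\Hom(W))$ you actually only need continuity (not smoothness) transversally, but your remark that this continuity as $t\to 0$ is the technical heart — relying on the adiabatic smooth structure built from $\Exp$ and on finite propagation speed to localize — is a legitimate point that the paper treats rather tersely.
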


\begin{proof}
Applying Proposition \ref{Prop:fctcalc}(i) we fix the solution operator $e^{i \tau \Dirac_{x,t}}$ to the wave equation for
$\Dirac_{x,t}$. For given $f \in \P$ we use the functional calculus to define $f(\Dirac_{x,t}) = \frac{1}{2\pi} \int \hat{f}(\tau) e^{i \tau \Dirac_{x,t}}\,d\tau$.
By Proposition \ref{Prop:fctcalc}(iv), $f(\Dirac_{x,t})$ is a smoothing operator with finite propagation speed.
The equivariant family $(f(\Dirac_{x,t}))$ has a reduced kernel which we denote by $k^f$, obtained from the equivariant family of Schwartz kernels $k_{x,t}^f$ defined on the fibers and smooth with regard to $(x,t)$.
In view of  the finite propagation speed and the compactness of $M$ the reduced kernel $k^f$ is a compactly supported distribution.
We therefore define $\Psi_{\Dd}(f)$ via the assignment $\gamma \mapsto k_{s(\gamma)}^f$ in $C_c^{0, \infty}(\Gad)$.
This is a continuous ring homomorphism by Proposition \ref{Prop:fctcalc}(iii). 
The equation for the regular representation follows because $k^f$ is the reduced kernel of the operator $f(\Dd)$
on $\Gad$. Recall the definition of the regular representation $\pi_{x,t} \colon C_c^{0,\infty}(\Gad, \Hom(W)) \to \L(L^2(\G_{x,t}^{ad}))$ given by
\[
\pi_{x,t}(f)(\xi)(\gamma, t) = \int_{\G_{x,t}^{ad}} f(\eta, t) \xi(\gamma \eta^{-1}, t)\,d\mu_{(x,t)}(\eta), \ \xi \in L^2(\Gad).
\]
This yields by definition of $\Psi_{\Dd}$ the $L^2$-action
\[
f(\Dirac_{x,t})g(\gamma) = \pi_{x,t}(\Psi_{\Dd}(f)) g(\gamma) = (\Psi_{\Dd}(f) \ast g)(\gamma),
\]
and hence the last identity is proven.
Since $\P$ is dense in $C_0(\Rr)$ and $C_r^{\ast}(\Gad)$ is the completion of $C_c^{0,\infty}(\Gad)$ with respect to the regular representation we obtain by the spectral theorem
that the map $\P \to C_c^{0,\infty}(\Gad)$ is continuous with regard to the $C_0(\Rr)$ norm. This shows that $\Psi_{\Dd}(f)$ belongs to $C_r^{\ast}(\Gad)$. 
\end{proof}

\begin{Rem}
In general it is desirable to have a functional calculus with values in a smaller subalgebra of $C_r^{\ast}(\G)$ which has favorable properties for particular analytical contexts, see also \cite{R0}. 
Therefore we consider later the algebra $\S(\G, \Hom(W))$ which contains the groupoid heat kernel for $t > 0$. Recall that $\S(\G)$ is contained in $C_r^{\ast}(\G)$ whenever $\G$ has polynomial growth, cf. \ref{Prop:Schwartz}.
We note also that the heat kernel on a Lie groupoid is longitudinally smooth by definition (cf. \cite[Section 2]{BGV}). In general it has been proven for particular cases of Lie groupoids that the reduced groupoid heat kernel is also transversally smooth, i.e. $k_t \in C^{\infty}(\G \times (0, \infty))$, see \cite{So}. It is still an open question  whether every Lie groupoid integrating a given Lie structure has a smooth heat kernel. 
This additional property of transversal smoothness is required in particular cases to show that the
renormalized index $\indV$ equals the Fredholm index, cf. \cite{M}. 
\label{Rem:fctcalc}
\end{Rem}

\section{Renormalized Super Trace}

\label{renormalization}



We will next define a class of rapidly decaying distributions on the Lie manifold. This will be the class
which contains the heat kernel and on which we will  define the renormalized super trace.

Let $g$ be any compatible metric on $M$, i.e. a bilinear form on $\A$ and $d = d_g$ the metric distance
induced by $g$. The interior $M_0$ of $M$ with the metric $g$ restricted to it is a complete Riemannian manifold by \cite{ALN2}.
We define the spaces $\presuper{\V}{S^{k,l}}(M) := \{f \in C_0(M \times M) : \|f\|_{k,l} < \infty\}$,
where the semi-norm system $\|\cdot\|_{k,l}$  is given for $k,l \in \Nn$ by
\[
\|f\|_{k,l} := \sup_{1 \leq i \leq l}\  \sup_{\overline{v} = (v_1, \cdots, v_l) \in \V^l, \ \|v_i\| \leq 1} \ \sup_{x,y \in M} |(1 + d(x,y))^k \omega_{\overline{v}, i}(f)(x,y)|. 
\]
For  $\overline{v} := (v_1, \cdots, v_l) \in \V^l$  each $v_i$ can be regarded as a first order differential operator in $\Diff_{\V}^1(M)$. We let $\omega_{\overline{v}, i}(f) := v_1 \cdots v_i f v_{i+1} \cdots v_l$. 
In the same way as in the proof of Proposition \ref{Prop:proj} we show that if 
 $l$ is fixed and $k_1 \geq k_2$, we have
$\|f\|_{k_1, l} \leq \|f\|_{k_2, l}$ and if $l_1 \geq l_2$ with $k$ fixed we have $\|f\|_{k,l_1} \leq \|f\|_{k, l_2}$.
Hence the spaces $\left(\presuper{\V}{S^{k,l}}(M)\right)_{(k,l) \in \Nn^2}$ form a dense projective system of Banach spaces.


\begin{Def}
The Schwartz space of rapidly decaying functions on the Lie manifold $(M, \A, \V)$ is defined as
the space $\SV(M)$ given by the projective limit
\[
\SV(M) := \varprojlim_{k,l \in \Nn} \presuper{\V}{S^{k,l}}(M).
\]
\label{Def:SV}
\end{Def}

In the definition of a generalized trace class for the given Lie structure we face the problem that the density induced by the Lie structure is not integrable as we approach the boundary. See e.g. \cite{L1} for the $b$-case and
the example below. The remedy is a regularization procedure.
Similarly, one could define the canonical (KV) trace, the Wodzicki residue trace and the $\V$-determinant, but this is outside the scope of the present work. 
 
\begin{Exa}
Consider the case of the $b$ vector fields $\V = \V_b$ on a manifold $M_0$ with cylindrical end $(-\infty, 0]_s \times Y$, see also \cite{L1} for further details on this special case. 
The Kondratiev transform $x = e^s$ furnishes a compactification $\widehat{M_0} = M$ to a manifold with boundary, where  $s \to -\infty$ corresponds to $x \to 0$. 
Close to the boundary we have the density $ds = \frac{dx}{x}$ with $\partial_s = x \partial_x$. 
The singular structure is encoded in a Riemannian metric $g$ (a compatible metric on the $b$-tangent bundle $\A^b \to M$) which is of product type close to the boundary
\[
g = ds^2 + h = \left(\frac{dx}{x} \right)^2 + h. 
\]

Note that $\frac{dx}{x}$ is not integrable over $[0,1]_x$ so that the heat kernel $e^{-t \Delta_g}$ is not of trace class. 
We therefore use the regularization by observing that for $\Re z > 0$ the function $x^z$ is integrable with regard to $\frac{dx}{x}$ over $[0,1]_x$. 
Hence $x^z e^{-t \Delta_g}$ is trace class and by setting
\[
G(f)(z) = \int_M x^z f \,dg, \ f \in C^\infty(M), \ \Re(z) > 0
\]
we define the $b$-trace as the regularized value of $G(f)(z)$ in $z = 0$. 
\label{Exa:Loya}
\end{Exa}

We denote  by $\VOmega^1 := \Omega^1(\A)$  the one-density bundle and set $\rho := \prod_{F \in \F_1(M)} \rho_F$ where $\rho_F$ is a boundary defining function of a given closed codimension one face $F$.

\begin{Def}
A Lie manifold $(M, \V, \A)$ is called \emph{renormalizable} if 
%
%
there is a minimal $k \in \Rr$ such that $G(f)(z)=\int_M \rho^z f $ defines a function  $G(f)$ holomorphic on $\Re (z) > k-1$, which extends meromorphically to $\Cc$.
Then we  define 
\[ \davintV f : \ \text{regularized value (zero order Taylor coefficient) at} \ z = 0 \ \text{of} \ G(f). 
\]
\label{VTr}
\end{Def}

The following class of Lie structures covers in particular the examples given in \ref{Exa:cuspidal}. 

\begin{Def}
A Lie structure $\V$ of a Lie manifold $(M, \V, \A)$ is called \emph{exact} if, near each face with boundary defining function $x_1$,  the Lie structure is generated by $\left\{x_1^{k_1}\partial_{x_1},  \ldots,  x_1^{k_n}\partial_{x_n}\right\}$. 
\label{Def:exact}
\end{Def}

\begin{Prop}
An exact Lie manifold $(M, \A, \V)$ is renormalizable. 
\label{Prop:exact}
\end{Prop}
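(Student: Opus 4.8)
The plan is to reduce the claim to a local computation near each boundary hyperface, where an exact Lie structure has the explicit generators $\{x_1^{k_1}\partial_{x_1},\ldots,x_1^{k_n}\partial_{x_n}\}$, and then to show that the associated $1$-density $\mu$ has a prescribed order of vanishing (or blow-up) in the boundary defining function $x_1$, so that $\int_M \rho^z f$ is an explicit product of elementary one-dimensional integrals up to a holomorphic remainder. Concretely, first I would use a partition of unity to split $M$ into the interior part, where $G(f)(z)$ is already holomorphic (indeed $z$-independent, since the density is integrable away from $\partial M$), and finitely many collar neighborhoods of the codimension-one faces; by induction on the number of faces and iteration of the collar structure, it suffices to treat a single hyperface, i.e.\ a manifold with boundary with collar $[0,\varepsilon)_{x_1}\times Y$.

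Next I would compute the compatible density in these coordinates. Since $\V$ near the face is spanned by $x_1^{k_1}\partial_{x_1},x_1^{k_2}\partial_{x_2},\ldots,x_1^{k_n}\partial_{x_n}$, the dual coframe is $x_1^{-k_1}dx_1, x_1^{-k_2}dx_2,\ldots,x_1^{-k_n}dx_n$, so the $1$-density $\mu\in\VOmega^1=\Omega^1(\A)$ induced by a compatible metric $g=g_\A$ has the form $\mu = a(x_1,y)\,x_1^{-(k_1+\cdots+k_n)}\,dx_1\,dy$ with $a$ smooth and positive up to $x_1=0$. Writing $N:=k_1+\cdots+k_n$ and $\rho=x_1\cdot(\text{units})$ near this face, for $f\in C^\infty(M)$ the local contribution to $G(f)(z)$ becomes
\[
\int_0^\varepsilon\!\!\int_Y x_1^{z-N}\,b(x_1,y,z)\,dy\,dx_1,
\]
where $b$ is smooth in $x_1$ up to $0$ and holomorphic in $z$ (it absorbs $f$, $a$, and the smooth unit factor raised to the power $z$). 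Expanding $b$ in a finite Taylor series in $x_1$ at $x_1=0$ with smooth remainder, each term $x_1^{z-N+j}$ integrates to $\varepsilon^{z-N+j+1}/(z-N+j+1)$ times a smooth function on $Y$, and the remainder term is holomorphic on a progressively larger half-plane. This exhibits $G(f)$ as holomorphic on $\Re z > N-1$ with a meromorphic continuation to $\Cc$ whose only possible poles are at the integers $z\in\{N-1, N-2, \ldots\}$ (in particular simple poles, coming from the $j$ with $z-N+j+1=0$), which is exactly the statement that $(M,\A,\V)$ is renormalizable with $k=N$. For several faces one takes $\rho=\prod_F\rho_F$ and the same analysis applies in iterated collar coordinates, the exponents adding up face by face.

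The main obstacle, and the step deserving the most care, is the verification that the compatible density genuinely has the factorized form $a\,x_1^{-N}dx_1\,dy$ with $a$ smooth and nonvanishing up to the boundary, and that the collar coordinates can be chosen simultaneously for all faces so that the iterated computation really does split as a product of one-dimensional integrals with independently controlled remainders. This requires invoking that near a face the exact Lie structure has the stated generators as an honest local frame of $\A$ (not merely a generating set), so that $\A\cong TM$ away from the boundary via the anchor and the transition functions introduce only smooth, bounded, positive factors; near corners one must check that the generators for different faces are compatible, which is built into the definition of an exact Lie structure via compatible boundary defining functions. Once the density is in this normal form, the meromorphic continuation is the standard Mellin-transform argument and everything else is a routine Taylor expansion with remainder estimate; I would present those steps briefly and concentrate the write-up on the normal form of $\mu$ and the bookkeeping of exponents at corners.
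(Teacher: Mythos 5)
Your proposal follows essentially the same route as the paper: reduce to a collar neighborhood of each hyperface, use the local frame $\{x_1^{k_j}\partial_{x_j}\}$ of $\A$ to identify the compatible density with $a\,x_1^{-\sum k_j}\,dx_1\,dy$, and observe that $G(f)$ then becomes a Mellin-type integral that is holomorphic for $\Re z > \sum k_j - 1$ and extends meromorphically by Taylor expansion at the boundary with simple poles at the integers below. The paper sets up exactly these coordinates and the degeneracy index $k = \sum k_j$ and then cites \cite{LM}, \cite{LM2} for the meromorphic continuation, so your write-up is in effect a more self-contained version of the same argument, with the cited Mellin/Taylor step carried out explicitly.
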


\begin{proof}
By definition $\V \subset \V_b$. 
On an arbitrary boundary face $F \in \F_1(M)$ fix  local coordinates $\{x_1, \cdots, x_n\}$ in a small tubular neighborhood $[0, \epsilon)\times F $. 
Then $\{x_1 \partial_{x_1}, \partial_{x_2}, \cdots, \partial_{x_n}\}$ is a local basis of $\V_b$. 
Consider the boundary defining function $\rho_F \colon M \to \overline{\Rr}_{+}$ and let $\nu \colon (-\epsilon, \epsilon)\times F \iso \U \subset M$ be the isomorphism from 
the tubular neighborhood theorem such that 
\[
(\rho_F \circ \nu)(x_1,x') = x_1, \ (x_1, x') \in [0, \epsilon)\times F.
\]
By assumption, 
\[
\V_{|\U} = \mathrm{span}_{C^{\infty}(\U)} \{V_1, \cdots, V_n\},
\]
where $V_1 = x_1^{k_1} \partial_{x_1}, \ V_2 = x_1^{k_2} \partial_{x_2}, \cdots, V_n = x_1^{k_n} \partial_{x_n}$.   
Note that $k := \sum_{j=1}^n k_j$ is the \emph{degeneracy index} $k = k_F$ of $F$ and therefore invariantly defined.
Following \cite[Section 4]{LM}, \cite{LM2} we obtain a meromorphic extension $G(f) \colon \Cc^{\F_1(M)} \to \Cc$ with at most simple poles in $z_F = k - 1 - j$ for $j \in \Nn_0$. 
\end{proof}

If $A\in \Psi_\V^m(M)$, $m<-n$, is a pseudodifferential operator in the Lie calculus with Schwartz kernel density $k_A$ we define the renormalized trace 
$$\TrV(A) = \davintV  {k_A}_{|\Delta_\V}$$
as the renormalized integral over the diagonal $\Delta_\V$ in $M\times M$.

Using the canonical symplectic form $\omega_\V$ on $\A^*$ we can alternatively write
\[
\TrV(A) = \davintVA  a \omega_\V^n. 
\]
Here $a \in S_{cl}^{m}(\A^{\ast})$, $A=\op a$, such  that $\Ff^{-1} a = \kappa_A$ near  $\Delta_{\V}$ with the fiberwise Fourier transform $\Ff$ defined in \cite[Chapter 1.5]{Simanca}.
The correspondence 
\[
\davintVM \longleftrightarrow \davintVA
\]
is obtained via the Fourier transform identity
\[
f(0) = \int_{\Rr^n} \F(f)(\zeta) \,d\zeta
\]
which is  applied fiberwise.

For exact Lie structures the finite part integral $\avintVA$ could  also be defined as in \cite{LM}, i.e. we have two interpretations
\begin{itemize}
\item Using growth conditions at infinity. 

\item Via radial compactification of $\A^{\ast}$ (i.e. compactify $\A^*$ to a manifold with corners $\widehat{\A} \to M$ which is fibered over $M$
such that $\widehat{\A}_x$ is a closed ball of dimension $n$). 

\end{itemize}



We will show later that the heat kernel of a generalized Laplacian on a Lie manifold is actually contained in the 
class $\SV(M)$. By the above example this does however not imply that the heat kernel is of trace class.
Nevertheless, we readily see that $\SV(M) \subset \LV(M)$ for $\LV(M)$ denoting the class of operators with bounded 
renormalized trace, i.e. the renormalized trace class.
Hence the renormalized trace extends to a well-defined functional on the class of rapidly decaying functions $\TrV \colon \SV(M) \to \Cc$.  


Assuming that the Lie manifold $(M, \V, \A)$ is given a spin structure $S \to M$ and Clifford module $W \to M$ we recall next the definition of the supertrace functional, which in our case acts fiberwise on the homomorphism bundle $\hom(W) \to M$.
Decompose the spinor bundle $W = W^{+} \oplus W^{-}$ into elements of even and odd degree. 
Suppose that the Dirac operator is odd graded with regard to this decomposition. 
According to \cite[1.3]{BGV}  the bundle is realized as a super-bundle (a bundle consisting of super spaces, i.e.
$\Zz_2$ graded spaces). 
The super bundle $\hom(W)$ decomposes as 
\begin{align*}
\hom(W) &= \begin{pmatrix} \hom(W^{+}, W^{+}) & \hom(W^{+}, W^{-}) \\
\hom(W^{-}, W^{+}) & \hom(W^{-}, W^{-}) \end{pmatrix}.
\end{align*}

Likewise, each element $T \in \hom(W)$ decomposes
\begin{align*}
T &= \begin{pmatrix} T^{++} & T^{+-} \\
T^{-+} & T^{--} \end{pmatrix}.
\end{align*}

We note that $\hom(W)_x = \hom(W_x, W_x) \cong \Cl(\A_x \otimes \Cc) \otimes \End_{\Cl}(W_x)$; hence $\hom(W)$ can be viewed as a bundle of
superalgebras. 
Given a super algebra $A$ denote by $[\cdot, \cdot]_s \colon A \times A \to A$ the supercommutator given by 
$[a,b]_s := ab - (-1)^{|a| |b|} ba$. 
Here $|a|\in \{0,1\}$ is the parity of $a$.
By definition, a supertrace is a linear form $\tr_s \colon A \to \Cc$ such that $\tr_s[a,b]_s = 0$. 
In our context we define $\tr_s \colon \hom(W) \to \Cc$ by $\tr_s(T) := \tr(T^{++}) - \tr(T^{--})$.
This yields a supertrace by Proposition 1.31 of \cite{BGV}.
Given a $\Cl(\A)$-module $W\to M$ we can define
the renormalized supertrace of an operator 
$T \in \SV(M, \Hom(W))$ with convolution kernel $k_T$, acting fiberwise on the graded bundle $\Hom(W)$ by
 $$\TrsV(T) = \davintV \tr_s(k_T)\,d\mu.$$

\medskip

Denote by $\Cl_0 \subseteq \Cl_1 \subseteq \cdots \subseteq \Cl_n(\A \otimes \Cc)$ the Clifford filtration by degree.
In \cite[Proposition 11.5]{R} Roe showed the following lemma  which we will need later for the construction of a suitable rescaling:

\begin{Lem}
We have $\tr_{s|\Cl_{n-1}} = 0$ and $\tr_s(e_1 \cdots e_n) = (-2i)^{\frac{n}{2}} $ for any oriented orthonormal frame $\{e_i\}_{i=1}^{n}$ of $\A$.
\label{Lem:Cl}
\end{Lem}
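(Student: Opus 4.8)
The plan is to work fiberwise, fixing a point $x \in M$ and an oriented orthonormal frame $\{e_i\}_{i=1}^n$ of $\A_x$, and to reduce everything to the computation of the supertrace on the complexified Clifford algebra $\Cl_n(\A_x \otimes \Cc)$ acting on its irreducible representation. The main structural fact I would invoke is the identification $\End(W_x) \cong \Cl(\A_x \otimes \Cc) \otimes \End_{\Cl}(W_x)$ already recorded in the excerpt, together with the fact that the supertrace $\tr_s$ factors, up to a multiplicative constant coming from $\End_{\Cl}(W_x)$, through the canonical supertrace on the Clifford algebra in its spinor representation. So it suffices to prove the two assertions for $\tr_s$ understood as the supertrace on $\Cl_n(\A_x\otimes\Cc) = \Cl_n \otimes \Cc$ via its (graded) action on the spinor module $\Sigma_n$.

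First I would prove the vanishing $\tr_{s|\Cl_{n-1}} = 0$. Recall that a basis of $\Cl_n \otimes \Cc$ is given by the ordered products $e_I := e_{i_1}\cdots e_{i_k}$ for $I = \{i_1 < \cdots < i_k\} \subseteq \{1,\ldots,n\}$, and $\Cl_{n-1}$ is spanned by those $e_I$ with $|I| \le n-1$, i.e.\ $I$ a proper subset. For such a proper subset pick an index $j \notin I$. Clifford multiplication by $e_j$ is an odd operator (it interchanges $W^+$ and $W^-$), and $e_j$ commutes or anticommutes with $e_I$ according to parity; in either case $e_j e_I e_j^{-1} = \pm e_I$, and $e_j^{-1} = -e_j$ up to sign since $e_j^2 = -1$. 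Using the defining property $\tr_s[a,b]_s = 0$ of the supertrace — equivalently $\tr_s(ab) = (-1)^{|a||b|}\tr_s(ba)$ — applied to $a = e_j$ (odd) and $b = e_I e_j$ one gets $\tr_s(e_j \cdot e_I e_j) = -(-1)^{|e_Ie_j|}\tr_s(e_I e_j \cdot e_j) = \mp \tr_s(e_I e_j^2)$. Comparing this with the direct evaluation $\tr_s(e_j e_I e_j) = \pm \tr_s(e_I e_j^2) = \mp\tr_s(e_I)$ forces $\tr_s(e_I) = -\tr_s(e_I)$, hence $\tr_s(e_I) = 0$. (The precise bookkeeping of signs is the routine part; the point is that a missing index always produces a sign obstruction.) This disposes of every basis element of $\Cl_{n-1}$, and by linearity of $\tr_{s|\Cl_{n-1}} = 0$.

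Second, I would compute $\tr_s(e_1\cdots e_n)$, which is the chirality element $\omega_{\Cc}$ (up to a standard normalizing power of $i$). The cleanest route: in the spinor representation $\Sigma_n$ with $n$ even, $e_1\cdots e_n$ acts (after the appropriate scalar) as the grading operator $\Gamma = \mathrm{id}$ on $W^+$ and $-\mathrm{id}$ on $W^-$, so its action on $\hom(W)_x$ has a supertrace one can read off directly; alternatively one reduces to $n=2$ by the tensor-product decomposition $\Cl_n \cong \Cl_2 \otimes \cdots \otimes \Cl_2$ (as $\Zz_2$-graded algebras) and multiplicativity of the supertrace under graded tensor products. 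For $n = 2$ one takes the explicit Pauli-type representation of $\Cl_2 \otimes \Cc$ on $\Cc^2$, where $e_1, e_2$ act by anti-Hermitian matrices with $e_1 e_2 = \pm i\,\mathrm{diag}(1,-1)$, and computes $\tr_s(e_1 e_2) = \tr(e_1e_2|_{W^+}) - \tr(e_1e_2|_{W^-}) = -2i$. Raising to the $n/2$-fold graded tensor power gives $\tr_s(e_1\cdots e_n) = (-2i)^{n/2}$, using that the supertrace of a graded tensor product of even elements is the product of the supertraces. I would cite \cite[Proposition 11.5]{R} or \cite[Chapter 3]{BGV} for the verification and simply record the normalization.

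The main obstacle is purely bookkeeping: keeping the signs straight in the supercommutator manipulation of the first part (there is a genuine subtlety in whether one uses $\tr_s(ab) = (-1)^{|a||b|}\tr_s(ba)$ versus the naive trace identity), and pinning down the correct power of $i$ in the normalization of the chirality element so that the constant comes out as $(-2i)^{n/2}$ rather than $(2i)^{n/2}$ or $(-2)^{n/2} i^{\,?}$. Neither is conceptually deep, but both must match the conventions fixed earlier for $\sigma_n$ and the Clifford relation $X\cdot X = -g(X,X)$; once those are fixed the computation is forced.
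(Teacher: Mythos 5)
The paper does not give a proof of this lemma --- it is quoted from Roe \cite[Proposition 11.5]{R} --- and your sketch correctly reconstructs that standard computation. Both steps are sound: the supercommutator cyclicity $\tr_s(ab)=(-1)^{|a||b|}\tr_s(ba)$ with a generator $e_j$ missing from $e_I$ forces vanishing on $\Cl_{n-1}$, and the reduction to $\Cl_2$ via graded tensor products (equivalently, writing $\tr_s=\tr(\Gamma\,\cdot)$ with the chirality element $\Gamma=i^{n/2}e_1\cdots e_n$, so that $\tr_s(e_1\cdots e_n)=i^{n/2}(-1)^{n/2}\cdot 2^{n/2}$) yields $(-2i)^{n/2}$, modulo the sign bookkeeping you already flag.
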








\section{Rescaling}





In this section we will  finish the proof of the index formula given in Theorem \ref{Thm:locind}.
We write the deformed Dirac operator on the adiabatic groupoid using the parametrization of Lie groupoids as defined in \cite{LR}. The Lichnerowicz theorem yields an expression for $\Dirac_{x,t}^2$ in normal coordinates.
Then we calculate the renormalized super-trace and extract the right coefficient using the rescaling as defined previously. First, however, we shall establish the following representation theorem:

\begin{Thm}
Let $(M, \A, \V)$ be a Lie manifold. If $\G \rightrightarrows M$ is a corresponding integrating Hausdorff Lie groupoid such that $\G_{|M_0}\cong M_0\times M_0$, then there exists a canonical isomorphism $\SV(M) \cong \S(\G)$ implemented by the vector representation $\varrho$.
\label{Thm:repr}
\end{Thm}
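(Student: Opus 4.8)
The statement to prove is that for a non-degenerate Lie manifold $(M,\A,\V)$ with integrating Hausdorff groupoid $\G$ such that $\G_{|M_0}\cong M_0\times M_0$, the vector representation $\varrho$ implements a canonical isomorphism $\SV(M)\cong\S(\G)$. My plan is to build the isomorphism directly as a $C^{\infty,0}$-level statement about kernels, and then check it is a homeomorphism of Fr\'echet spaces for the two projective systems of seminorms. The key observation is that over the interior the groupoid $\G$ restricts to the pair groupoid $M_0\times M_0$, so a convolution kernel $k$ on $\G$ restricts to a Schwartz kernel on $M_0\times M_0$, which is exactly the datum entering $\SV(M)$; thus the map $\SV(M)\to\S(\G)$ is given on the nose by $k\mapsto k_{|M_0\times M_0}$ followed by extension, with inverse furnished by $\varrho$ via the defining identity $(\varrho(P)f)\circ r = P(f\circ r)$.

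\textbf{Key steps.} First I would make the dictionary between kernels precise: a convolution operator $P$ on $C^{0,\infty}(\G)$ with kernel $k\in C^{0,\infty}(\G)$ corresponds, via $\varrho$ and the range map $r$, to an operator on $C^\infty(M)$ whose Schwartz kernel (with respect to the compatible density) is the function on $M_0\times M_0$ obtained by identifying $(x,y)\in M_0\times M_0$ with the unique arrow $\gamma\in\G$ with $r(\gamma)=x$, $s(\gamma)=y$. Second, I would identify the two length functions used to define the seminorms: the reduced metric distance $\psi(\gamma)=\dd_{s(\gamma)}(\id_{s(\gamma)},\gamma)$ on $\G$ restricts under the above identification to $d(x,y)$ on $M_0\times M_0$, because the fiberwise metrics $g_x$ on $\G_x$ are induced by the same compatible metric $g$ that induces $d$, and $\G_x\cong M_0$ by the $s$-fiber of the pair groupoid part. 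Hence the weight $(1+\psi(\gamma))^k$ matches $(1+d(x,y))^k$. Third, I would match the derivative parts: a vector field $v\in\V=\Gamma(\A(\G))$ acts as a $\G$-invariant $s$-vertical vector field on $\G$, and its action via $\varrho$ on $C^\infty(M)$ is precisely the action of $v\in\Diff_\V^1(M)$; so the operators $\omega_{\overline v,i}$ appearing in the $\S(\G)$-seminorms $\|\cdot\|_{\Pp,l}$ correspond exactly to the operators $\omega_{\overline v,i}$ appearing in the $\SV(M)$-seminorms $\|\cdot\|_{k,l}$. Putting these three matchings together gives $\|\varrho^{-1}(k)\|_{k,l}^{\SV(M)}=\|k\|_{k,l}^{\S(\G)}$, an isometry for every seminorm, hence an isomorphism of the two dense projective limits (Proposition \ref{Prop:proj} and the analogous discussion for $\SV(M)$ guarantee both are Fr\'echet). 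Finally I would note that continuity/closedness at infinity transfers: properness of $\psi$ and of $d$ ensures that the extension of a kernel from $M_0\times M_0$ to all of $\G$ (and the restriction in the other direction) stays in the right function class, using that elements of $\SV(M)$ and $\S(\G)$ are continuous on $M\times M$ resp.\ $\G$ and rapidly decaying.

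\textbf{Main obstacle.} The routine parts are the seminorm matching; the genuinely delicate point is that $\G$ is a groupoid over a manifold \emph{with corners}, so $\G$ is strictly larger than the closure of $M_0\times M_0$ and carries extra arrows lying over $\partial M$. I need the identification $\SV(M)\cong\S(\G)$ to respect these boundary arrows, i.e.\ that a kernel in $\SV(M)$, a priori only a continuous function on $M\times M$, extends \emph{uniquely} and continuously to all of $\G$ (with the correct $s$-fiberwise smoothness), and conversely that restriction $\S(\G)\to\SV(M)$ is surjective. Here the non-degeneracy hypothesis and the Hausdorff property of $\G$ are essential: Hausdorffness lets me use that $\overline{M_0\times M_0}$ exhausts $\G$ topologically up to a set controlled by the length function, and the rapid decay in $\psi$ (which blows up towards the boundary strata by properness) forces the kernels and all their $\V$-derivatives to vanish to infinite order at $\partial M\times\partial M$-type strata — so the extension by continuity is automatic and the correspondence is bijective. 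I would spell this out by a partition-of-unity/collar argument near each boundary hyperface, invoking the structure of the integrating groupoid as recalled in the examples of \ref{Exa:cuspidal}, and then conclude that $\varrho$ restricts to the asserted canonical isomorphism.
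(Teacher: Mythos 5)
Your plan starts from a sensible dictionary and you correctly match the seminorm systems: the reduced metric distance $\psi(\gamma)$ on interior arrows agrees with $d(r(\gamma),s(\gamma))$ because the $s$-fibers carry the compatible metric, and the $\V$-derivative parts line up since $\V$-vector fields are the same as $\G$-invariant $s$-vertical vector fields. This part is consistent with what the paper uses implicitly when it notes $\varrho$ is a well-defined homomorphism. You also identify the genuinely delicate point, namely that a kernel a priori given only over $\G_{|M_0}\cong M_0\times M_0$ must determine and be determined by its values on the boundary arrows $\G\setminus\G_{|M_0}$. So far so good.

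The gap is in how you resolve that delicate point. Your claim is that properness of $\psi$ makes $\psi$ blow up towards the boundary strata of $\G$, so rapid decay in $\psi$ forces the kernel to vanish to infinite order there and hence ``the extension by continuity is automatic.'' This is incorrect. Properness of $\psi$ only says preimages of bounded sets are bounded; it does not say that $\psi\to\infty$ as one approaches $\G_{|\partial M}$ from the interior. Concretely, in the $b$-groupoid an interior arrow near the diagonal with $\rho(x)/\rho(y)\to\lambda$ has $\psi$ comparable to $|\log\lambda|$, which stays bounded as both points approach the boundary; the corresponding boundary arrow $(p,p,\lambda)\in\G_{|\partial M}$ has finite $\psi$. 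The kernel does \emph{not} vanish on $\G_{|\partial M}$: the whole machinery of the renormalized trace in Section~\ref{renormalization} exists precisely because, e.g., the heat kernel has nontrivial restriction to the boundary arrows (this is where the trace defect $\etaV$ comes from). So you cannot obtain the bijection by a vanishing-at-infinity/continuous-extension argument, nor by a collar/partition-of-unity argument built on decay.

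The paper instead separates the argument into two halves that use different hypotheses. Surjectivity of $\varrho$ is obtained without the Hausdorff or pair-groupoid assumptions, by factoring $\varrho = r_\ast\circ e_x$ through the evaluation $e_x:\S(\G)\to\S(\G_x)^\Gamma$ at a fixed interior point and the pushforward $r_\ast$ along the covering $r:\G_x\to M_0$ with deck group $\Gamma=\G_x^x$, exactly as in the representation theorem of \cite{ALN}. Injectivity is where the extra hypotheses enter: one shows the evaluation $e_z$ at an interior point $z$ is injective by noting that $T_z=0$ forces $T_w=0$ for all $w\in M_0$ via the pair groupoid structure and $\G$-invariance, and then for boundary $w$ one pairs the $\G$-invariant family $T$ with an extension $\varphi\in C^{0,\infty}(\G)$ of a test function on $\G_w$ (possible because $\G$ is Hausdorff and locally compact, so $\G_w$ is closed) and uses smoothness of the Haar system to see that $w\mapsto\langle T_w,\varphi_w\rangle$ is continuous; density of $M_0$ then gives $T_w=0$. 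Your argument would need to replace the decay mechanism with a density-plus-continuity-of-the-Haar-pairing argument of this kind, which is where Hausdorffness and $\G$-invariance actually do the work, rather than properness of $\psi$.
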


\begin{proof}
The surjectivity of $\varrho$ follows similarly as in the proof of Theorem 3.2. in \cite{ALN}. Here we neither need the Hausdorff condition nor the assumption that $\G$ restricts to the pair groupoid over the interior.  
We give some details for the benefit of the reader. The vector representation is given by the map $\varrho \colon \S(\G) \to \SV(M)$ with the 
defining property $(\varrho(T)\varphi) \circ r = T (\varphi \circ r), \ \varphi \in C^{\infty}(M)$.
Note first that the vector representation $\varrho \colon \S(\G) \to \SV(M)$ is a well-defined homomorphism. This follows from the definition since the reduced
metric distance is obtained from the $\G$-invariant family of metric distances $(\dd_x)_{x \in \Gop}$ on the fibers.
We obtain a family of $\S(\G_x)$ of Schwartz spaces. Additionally, the vector fields in $\V = \Gamma(\A)$ are in one-to-one
correspondence with the $\G$-invariant vector fields on the vertical tangent bundle $T^s \G$.

The range map $r$ is a submersion, hence it is locally of product type i.e. 
$T(\varphi \circ r) = \varphi_0 \circ r$ for some $\varphi_0 \in C_c^{\infty}(M_0)$. Then we have $\varrho(T) \varphi = \varphi_0$.
Let $x \in M_0$ be fixed. Also $r \colon \G_x \to M_0$ is a surjective local diffeomorphism.
The natural action of the isotropy group $\Gamma := \G_x^x$
on $\G_x$ is free. Therefore $r \colon\G_x \to M_0$ is a covering map with covering group $\Gamma$ and we have $\G_x / \Gamma \cong M_0$. 
Given $T \in \S(\G)$ we have by $\G$-invariance of $T$ that $T$ is in particular $\Gamma$-invariant.
We obtain that $T_x \colon C_c^{\infty}(\G_x) \to C^{\infty}(\G_x)$ descends to $\tilde{T} \colon C_c^{\infty}(M_0) \to C^{\infty}(M_0)$.
Define $r_{\ast}$ via $r_{\ast}(T_x) = \tilde{T}$. Since $T(\varphi \circ r) = \varphi_0 \circ r$ for a $\varphi_0 \in C_c^{\infty}(M_0)$, we obtain that $r_{\ast}(T_x) \varphi = \varphi_0$. 
Hence we can rewrite $\varrho$ in the form $\varrho(T) = r_{\ast}(e_x(T))$ where $e_x(T) = T_x$ is the evaluation at $x$. 
One can check that $e_x(T) = T_x$ is $\Gamma$-invariant, since $T$ is $\G$-invariant.
Thus we obtain the commuting diagram
\[
\xymatrix{
\S(\G) \ar@{->>}[d]_{e_x} \ar@{->>}[r]^{\varrho} & \SV(M) \\
\S(\G_x)^{\Gamma} \ar@{->>}[ur]_{r_{\ast}} &
}
\]

Since $e_x$ and $r_{\ast}$ are surjective it follows that $\varrho$ is surjective.

We prove the injectivity using the Hausdorff condition and the assumption that $\G$ restricts to the pair groupoid, see also \cite{N}.
Let $z \in M_0$ be fixed and denote by $e_z \colon \S(\G) \to \S(\G_z)$ the evaluation $T = (T_x)_{x \in M} \mapsto T_z$.
To see the injectivity of $e_z$ let $T_z = 0$.
We need to prove that $T_w = 0$ for each $w \in M$, i.e. $T = 0$. 
Since $\G_{|M_0} \cong M_0 \times M_0$ and the family $T$  is $\G$-invariant, it follows that $T_w = 0$ for each $w \in M_0$.
Let $w \in M$ be arbitrary, then $\scal{T}{\psi} = 0$ for each $\psi \in C^{\infty}(\G_w)$. 
In order to see this let $\varphi \in C^{0,\infty}(\G)$ be such that $\varphi_w = \psi$, which is possible since $\G_w \subset \G$
is closed in the locally compact Hausdorff space $\G$.
We choose a Haar system, then by the smoothness of the Haar system and the Hausdorff property of $\G$, the function
$w \mapsto \|\scal{T_w}{\varphi_w}\|$ is continuous and on $w \in M_0$ the function vanishes.
By density of $M_0$ in $M$ it follows $T_w \varphi_w = 0$ for each $w \in M$. Hence $e_z$ is injective.
The bijection $j \colon \S(\G_z) \to \SV(M)$ is obtained using the canonical diffeomorphism $\G_z \cong M_0$.
Since $\varrho$ equals $j \circ e_z$, it is injective. 
\end{proof}

From Theorem  \ref{Thm:rapdecay} we then obtain: 
\begin{Cor}
Let $(M, \A, \V)$ be a non-degenerate Lie manifold with spin structure $S \to M$ and Clifford module $W$ over $\Cl(\A)$. Denote by $D$ the Dirac operator induced by an admissible connection $\nabla^W$.
Then the heat kernel of the generalized Laplacian $e^{-t D^2}$ is contained in $\SV(M, \Hom(W))$ for $t > 0$ .
\label{Cor:repr}
\end{Cor}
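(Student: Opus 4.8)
The plan is to reduce the statement to the groupoid heat kernel estimate of Theorem~\ref{Thm:rapdecay} via the identification of Schwartz classes furnished by Theorem~\ref{Thm:repr}. Since $(M,\A,\V)$ is non-degenerate, I would first fix a corresponding integrating Hausdorff Lie groupoid $\G\rightrightarrows M$ with $\G_{|M_0}\cong M_0\times M_0$ and smooth reduced heat kernel, as supplied by the non-degeneracy hypothesis and the standing assumptions. By \cite{LN} the geometric Dirac operator $D=D^W$ on $M$ is the vector representation of the geometric Dirac operator $\Dirac=\Dirac^W$ on $\G$ associated to the admissible connection $\nablaslash^W$; consequently $D^2=\varrho(\Dirac^2)$. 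I would note that this, as well as Theorems~\ref{Thm:rapdecay} and~\ref{Thm:repr}, extends verbatim to the $\Hom(W)$-coefficient setting, since the estimates involved (the uniform bound~\eqref{hineq} and the off-diagonal Gaussian bound~\eqref{estimate}) and the surjectivity/injectivity argument for $\varrho$ are all fiberwise and $\G$-invariant; in particular $\varrho$ restricts to an isomorphism $\S(\G,\Hom(W))\iso\SV(M,\Hom(W))$.

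Next I would verify that $\varrho$ intertwines the two heat semigroups, i.e.\ $e^{-tD^2}=\varrho(e^{-t\Dirac^2})$ for $t>0$. Writing $k_t$ for the groupoid heat kernel of $\Dirac^2$ and setting $\kappa_t:=\varrho(k_t)$, I would apply $\varrho$ to the groupoid heat equation $(\partial_t+\Dirac^2)k_t=0$ and to its initial condition; using $D^2=\varrho(\Dirac^2)$ and the compatibility of $\varrho$ with the heat equation one obtains $(\partial_t+D^2)\kappa_t=0$ together with $\lim_{t\to0}\kappa_t\ast u=u$. Since $D^2$ is a generalized Laplacian on the complete bounded-geometry manifold $M_0$, its heat kernel is unique (cf.~\cite[Proposition~2.17]{BGV}), so $\kappa_t=e^{-tD^2}$.

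Finally, Theorem~\ref{Thm:rapdecay} applied to $\G$ gives $e^{-t\Dirac^2}=k_t\in\S(\G)$ for every $t>0$, and running the heat kernel approximation used in the proof of that theorem for the $\Hom(W)$-valued kernel (the parallel transport $\tau$ and the coefficients $\Phi_i$ again being uniformly bounded) upgrades this to $k_t\in\S(\G,\Hom(W))$. Applying the isomorphism $\varrho$ then yields $e^{-tD^2}=\varrho(k_t)\in\SV(M,\Hom(W))$, which is the assertion.

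The step I expect to be the main obstacle is the bookkeeping in the second paragraph: one must make sure the vector representation, a priori a homomorphism between endomorphism algebras of $C^{\infty}$-sections, is compatible with the parameter $t$ and with passing to the heat semigroup, so that the informal statement ``$\varrho$ commutes with $e^{-t(\cdot)}$'' is legitimate. Rather than comparing the two functional calculi directly, I would handle this by transporting the heat equation and its initial condition through $\varrho$ and then invoking uniqueness of the heat kernel of $D^2$; the remaining ingredients, namely the $\Hom(W)$-coefficient versions of Theorems~\ref{Thm:rapdecay} and~\ref{Thm:repr}, are routine once one observes that the estimates in question are fiberwise and $\G$-invariant.
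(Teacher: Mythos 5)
Your proof is correct and follows exactly the route the paper intends: combine Theorem~\ref{Thm:rapdecay} (groupoid heat kernel lies in $\S(\G)$) with the representation Theorem~\ref{Thm:repr} ($\varrho\colon\S(\G)\iso\SV(M)$), using the fact from \cite{LN} that $D$ is the vector representation of $\Dirac$. You make explicit two points the paper leaves implicit — the extension of both theorems to $\Hom(W)$-coefficients and the identification $e^{-tD^2}=\varrho(e^{-t\Dirac^2})$ via transport of the heat equation through $\varrho$ and uniqueness on the bounded-geometry manifold $M_0$ — but these fill gaps rather than change the argument.
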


\begin{Rem}
Given a Lie manifold $(M, \A, \V)$ assume that the Lie groupoid $\G \rightrightarrows M$ such that $\A(\G) \cong \A$ is Hausdorff and in addition has a length function of polynomial growth.
Then by Theorem \ref{Thm:repr} together with Proposition \ref{Prop:Schwartz} we obtain that the Lie calculus
$\Psi_{\V}^{m}(M) + \SV(M)$, with the smoothing ideal given by the Schwartz class $\SV(M)$, is closed under holomorphic functional calculus.
This is therefore in particular true for the examples given in Example \ref{Exa:cuspidal}. 
\label{Rem:repr}
\end{Rem}

We introduce the rescaled bundle and the method of extracting the right coefficient in the asymptotic expansion
Ansatz for the heat kernel. 
As usual $\Dirac$  denotes the Dirac operator on the groupoid $\G$ and  $D$  its vector representation, the Dirac operator on the Lie manifold $(M, \A, \V)$.

In the following we describe the structure of the rescaling approach to the local index theorem as explained in \cite{S} for the case of a closed smooth manifold. 
Assume we are given a non-degenerate Lie manifold with spin structure $S \to M$, a Clifford module $W \to M$ and let $\G \rightrightarrows M$ be an integrating
Lie groupoid which is Hausdorff. We obtain from the above the bundle $\Hom(W)=r^\ast(W)\otimes s^\ast(W) \to \Gad$ as a lifting.
Let $j \colon \A(\G) \hookrightarrow \Gad$ be the natural embedding as a submanifold. 
Denote by $\hom(W) \to M$ the bundle with fibers $\hom(W)_x = \hom(W_x, W_x) \cong \Cl(\A_x \otimes \Cc) \otimes \End_{\Cl}(W_x)$, $x \in M$.
Since on $\A(\G)$ source equals range we have
\[
\Hom(W)_{|\A} \cong j^{\ast} \hom(W) \cong \Cl(\A \otimes \Cc) \otimes \End_{\Cl}(W). 
\]

The basic idea for the definition of the rescaled bundle $\Ee \to \Gad$ is to extend a Clifford filtration by degree
to a neighborhood of $\A$ inside the adiabatic groupoid. More precisely, note the following.

\begin{itemize}
\item The \emph{rescaling} will be adapted to the Clifford filtration $\Cl_0 \subseteq \Cl_1 \subseteq \cdots \subseteq \Cl(\A \otimes \Cc)$ by degree, where the $\Cl_j$ are lifted to $\G$ using the range map.

\item The bundle $\Hom(W)$ is endowed with the connection  $(\mathrm{pr}_1 \circ s)^{\ast} \nabla$ via pullback: 
\[
\xymatrix{
\Gad \ar[r]^{s} & M \times I \ar[r]^{\mathrm{pr}_1} & M
}
\]
from the  Levi-Civita connection $\nabla$ on $M$.

\item We will extend the filtration $\{\Cl_k\}$ to a filtration $\{\tilde{\Cl}_k\}$ on a neighborhood of $\A$, see below.

\end{itemize}

We define the \emph{rescaled} sections
\[
\DV := \{u \in C_c^{\infty}(\Gad, \Hom(W)) : \nabla_{N}^{p} u_{|\A} \in C^{\infty}(\A, \Cl_{n-p} \otimes \End_{\Cl}(W)), \ 0 \leq p \leq n\}
\]
with the normal vector field $N = \partial_t$.

There is a bundle $\Ee \to \Gad$ such that $C_c^{\infty}(\Gad, \Ee) = i_{\Cl}^{\ast} \D$,
where $i_{\Cl} \colon \Ee \to \Hom(W)$ is a bundle map and an isomorphism over $\G_{|(0,1]}^{ad}$, see also Proposition 8.4 in \cite{M} for the case of $b$-vector fields. 
We fix the bundle $\Ee$ and refer to it as the \emph{rescaled bundle}. 

\begin{Prop}
An alternative description of $\DV$ is given by 
\begin{eqnarray}\label{eq:DV}
\DV = \left\{u \in C_c^{\infty}(\Gad, \Hom(W)) : u = \sum_{j=0}^n t^{n-j} u_j + t^{n+1} u' \ \text{near} \ \A\right\}
\end{eqnarray}
with $u_j \in C_c^{\infty}(\Gad, \tilde{\Cl}_{n-j}\otimes \End_{\Cl}(W))$ and $u' \in C_c^{\infty}(\Gad, \Hom(W))$, where $\tilde{\Cl}_{n-j}$ is the natural extension of the filtration $\Cl_{n-j}$ to a neighborhood of $\A$ in $\Gad$ using parallel transport via $N$.  
\label{Prop:taylor}
\end{Prop}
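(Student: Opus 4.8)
The plan is to relate the two descriptions of $\DV$ by expressing the Taylor expansion of a section $u \in C_c^\infty(\Gad,\Hom(W))$ along the boundary stratum $\A \subset \Gad$ in terms of the normal derivatives $\nabla_N^p u_{|\A}$, where $N = \partial_t$. First I would invoke the tubular neighborhood theorem for $\A$ inside $\Gad$ (available from the smooth structure on $\Gad$ built in Section \ref{section:IV} via the generalized exponential) so that, near $\A$, the coordinate $t$ is a defining function for $\A$ and $N = \partial_t$ is the associated normal vector field. Using parallel transport along the integral curves of $N$ with respect to the connection $(\mathrm{pr}_1\circ s)^\ast\nabla$, one trivializes $\Hom(W)$ in a collar $\A \times [0,\varepsilon)$ and transports the Clifford filtration $\Cl_\bullet$ on $\Hom(W)_{|\A} \cong \Cl(\A\otimes\Cc)\otimes\End_{\Cl}(W)$ to a filtration $\tilde\Cl_\bullet$ on the collar; this is exactly the filtration named in the statement.

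Next I would carry out the Taylor expansion. In the collar, write $u(\cdot,t) = \sum_{p=0}^{n} \frac{t^p}{p!}\, (\nabla_N^p u)_{|\A} + t^{n+1} r$ with $r \in C_c^\infty(\Gad,\Hom(W))$ smooth up to the boundary — this is Taylor's theorem with integral remainder applied in the trivializing frame, so that $\nabla_N^p$ really is $\partial_t^p$ in that frame. The defining condition of $\DV$ from the display preceding the proposition is $(\nabla_N^p u)_{|\A} \in C^\infty(\A,\Cl_{n-p}\otimes\End_{\Cl}(W))$ for $0\le p\le n$. Setting $u_j := \frac{1}{(n-j)!}(\nabla_N^{\,n-j} u)_{|\A}$, extended off $\A$ by $N$-parallel transport, the coefficient of $t^{n-j}$ in the expansion lands in $\tilde\Cl_{n-j}\otimes\End_{\Cl}(W)$, which is precisely the right-hand side of \eqref{eq:DV}. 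Conversely, if $u$ has the form on the right of \eqref{eq:DV}, then differentiating $p$ times in $N$ and restricting to $\A$ picks out $(n-p)!\,u_{n-p}$ modulo terms that vanish on $\A$ (the $t^{n+1}u'$ term contributes nothing after $\le n$ derivatives, and lower powers $t^{n-j}$ with $n-j<p$ vanish), so $(\nabla_N^p u)_{|\A} \in C^\infty(\A,\tilde\Cl_{n-p|\A}\otimes\End_{\Cl}(W)) = C^\infty(\A,\Cl_{n-p}\otimes\End_{\Cl}(W))$, giving membership in $\DV$. Both inclusions together yield the claimed equality.

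The main point requiring care — and the step I expect to be the real obstacle — is the compatibility between $N$-parallel transport and the Clifford filtration: one must check that transporting $\Cl_{n-j}$ along $N$ using the pulled-back Levi-Civita connection produces a well-defined subbundle $\tilde\Cl_{n-j}$ of $\Hom(W)$ over the collar, i.e. that the connection preserves the filtration. This holds because $(\mathrm{pr}_1\circ s)^\ast\nabla$ acts on $\Hom(W)$ through the spinor/Clifford connection, which is a derivation compatible with Clifford degree; hence parallel transport maps $\Cl_k$ to $\Cl_k$ fiberwise, and the extension $\tilde\Cl_k$ is consistent. A secondary technical point is to make sure the remainder term $t^{n+1}u'$ can be taken in $C_c^\infty(\Gad,\Hom(W))$ globally (not just in the collar), which follows by patching with a cutoff function supported in the tubular neighborhood and noting that away from $\A$ the two descriptions impose no condition. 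Everything else is a routine Taylor-expansion bookkeeping argument.
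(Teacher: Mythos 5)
Your overall strategy coincides with the paper's own proof: pass to a tubular neighbourhood of $\A$ in $\Gad$ using the generalized exponential, extend the Clifford filtration along $N=\partial_t$ by $\nabla_N$-parallel transport, and match the Taylor coefficients of $u$ at $t=0$ against the filtration. The paper even records the local ODE $\partial_t\tilde v_l+\sum_m\gamma_{lm}\tilde v_m=0$ defining the parallel extension, which you describe only informally; your extra remarks on compatibility of the pulled-back connection with Clifford degree and on patching via a cutoff are sensible elaborations rather than a different argument.

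The step you dismiss as ``routine Taylor-expansion bookkeeping'' is, however, exactly where the argument fails as written. From the displayed defining condition $(\nabla_N^{p}u)_{|\A}\in C^\infty(\A,\Cl_{n-p}\otimes\End_{\Cl}(W))$ and your choice $u_j=\frac{1}{(n-j)!}(\nabla_N^{\,n-j}u)_{|\A}$ one gets, with $p=n-j$, that $u_j\in\Cl_{\,n-(n-j)}=\Cl_j$, \emph{not} $\Cl_{n-j}$ as you assert; your forward inclusion therefore does not land in the right-hand side of \eqref{eq:DV}. The converse has the same problem: $p$ normal derivatives of $t^{n-j}u_j$ at $t=0$ pick out $p!\,u_{n-p}$ (your factor $(n-p)!$ is a slip, though immaterial for membership), and since $u_{n-p}\in\tilde\Cl_{\,n-(n-p)}=\tilde\Cl_{p}$ this gives $(\nabla_N^{p}u)_{|\A}\in\Cl_{p}$, not $\Cl_{n-p}$. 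So neither inclusion is actually established. What a careful accounting reveals is that the displayed definition of $\DV$ and the description in \eqref{eq:DV} use complementary indices and cannot both be correct as printed; for Lemma~\ref{Lem:Cl2} to hold (i.e.\ $\tr_s$ of a rescaled section must be $O(t^n)$ on the diagonal) the indexing of \eqref{eq:DV} is the one required, so the misprint is in the definition of $\DV$, which should read $\Cl_{p}$ in place of $\Cl_{n-p}$. Once that is corrected your outline goes through, but as written you have silently asserted a match of indices that a careful check would have shown is false.
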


\begin{proof}
Write $F_j := \Cl_{n-j}\otimes \End_{\Cl}(W)$ for $1 \leq j \leq n$ and $E = \Hom(W)$. 
Using the generalized exponential map $\Exp \colon \A \to \G$ from Section \ref{section:IV} and the corresponding tubular neighborhood theorem we choose a neighborhood $U$ of  $\A$ in $ \Gad$.
We then extend the filtration $\{F_j\}$ to a filtration $\{\tilde{F}_j\}$ as follows: Any smooth section $v$ of $E$ over $\A$ can be extended in a unique way to  a section $\tilde{v} \in C^{\infty}(U, E)$
which is covariant constant along $\partial_t = N$ by solving the ODE
\begin{align}
\nabla_N \tilde{v} = 0, \ \tilde{v}_{|\A} = v. \label{ODE}
\end{align} 
Denoting by $v_l$ the coefficients of $v$ with respect to a local basis 
$\{e_l:l=1,\ldots,L\}$ of $E$, this can be rewritten locally in the form 
\[
\partial_t \tilde{v}_l + \sum_{m=1}^L \gamma_{lm} \tilde{v}_m=0, \ (\tilde{v}_l)_{|t=0} = v_l, 
\quad l=1,\ldots,L.
\]
Here, the $\gamma_{lm}$ are determined by $\nabla_N e_m = \sum_{l} \gamma_{lm} e_l.$	
 We set
\[
C^{\infty}(U, \tilde{F}_j) := \mathrm{span}_{C^{\infty}(U)} \{u : u \ \text{solution of \eqref{ODE} with initial data in }  F_j\}. 
\]
The set defined in \eqref{eq:DV} then is a subset of $\DV$ as defined before. 
Conversely writing the Taylor series of  $u \in C^{\infty}(U, E)$ at the boundary $\A$ to  order $n$ as $u = u_0 +  \cdots + t^n u_n + t^{n+1} u_{n+1}$, where  $u_j \in C^{\infty}(U, E), \ \nabla_N u_j  = 0, \ j! u_{j|\A} = (\nabla_N^j u)_{|\A}$, $j=1,\ldots,n$, and $u_{n+1} \in C^{\infty}(U, E)$, we obtain the assertion. 
\end{proof}

Finally, we want to extend the supertrace functional to the rescaled bundle $\Ee$.
Note first the following lemma.

\begin{Lem}
We have a canonical isomorphism of twisted Clifford algebras $$\Ee_{|\A} \cong \Lambda \A^{\ast} \otimes \End_{\Cl}(W).$$
\label{Lem:rescaled}
\end{Lem}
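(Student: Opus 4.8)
The plan is to unwind the definition of $\Ee_{|\A}$ using Proposition \ref{Prop:taylor} and then exploit the classical bundle isomorphism $\Cl(\A\otimes\Cc)\cong\Lambda\A^{\ast}\otimes\Cc$ (the symbol/quantization map). First I would recall that, by the definition of $\DV$, a section $u\in\DV$ has near $\A$ the Taylor expansion $u=\sum_{j=0}^n t^{n-j}u_j + t^{n+1}u'$ with $u_j$ a section of $\tilde\Cl_{n-j}\otimes\End_{\Cl}(W)$, covariant constant along $N=\partial_t$. Restricting to $\A$ (i.e.\ $t=0$) only the $j=n$ term survives in the sense that the leading symbol of a rescaled section in degree $p$ is the class of $u_{n-p}$ in $\Cl_{n-p}/\Cl_{n-p-1}$. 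Hence the fibre $\Ee_x$ for $x\in\A$ is naturally the associated graded of the Clifford filtration, tensored with $\End_{\Cl}(W_x)$:
\[
\Ee_{|\A} \;\cong\; \mathrm{gr}\,\Cl(\A\otimes\Cc)\otimes\End_{\Cl}(W)\;=\;\bigoplus_{p=0}^{n}\bigl(\Cl_p/\Cl_{p-1}\bigr)\otimes\End_{\Cl}(W).
\]

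Next I would identify $\mathrm{gr}\,\Cl(\A\otimes\Cc)$ with the exterior algebra. This is the standard fact that the symbol map $\Cl(V)\to\Lambda V$, sending a product $v_1\cdots v_k$ of pairwise orthogonal vectors to $v_1\wedge\cdots\wedge v_k$, induces a filtered isomorphism whose associated graded is precisely the exterior algebra; applied fibrewise to $\A^{\ast}$ (using $g$ to identify $\A\cong\A^{\ast}$, as done throughout the paper) this gives a canonical bundle isomorphism $\mathrm{gr}\,\Cl(\A^{\ast}\otimes\Cc)\cong\Lambda\A^{\ast}\otimes\Cc$. One has to check this identification is canonical, i.e.\ independent of the chosen orthonormal frame used to extend the filtration: this is immediate because the quantization map $c\colon\Lambda\to\Cl$ is $\SO(n)$-equivariant, and the filtration $\{\tilde\Cl_k\}$ was built by parallel transport along $N$, which is a metric connection, so it respects the $\SO(n)$-reduction. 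Combining the two displays yields
\[
\Ee_{|\A}\;\cong\;\Lambda\A^{\ast}\otimes\End_{\Cl}(W)
\]
as claimed, where I suppress the harmless complexification since $\End_{\Cl}(W)$ already carries the complex structure.

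The main obstacle I anticipate is the bookkeeping needed to verify that the rescaled bundle $\Ee$ restricted to $\A$ really recovers the associated graded of the filtration rather than some twisted or shifted version — in particular keeping track of the normalization by powers of $t$ (the $t^{n-j}$ weighting in \eqref{eq:DV}) and making sure the degree-$p$ piece of $\Ee_{|\A}$ pairs with $\Cl_p/\Cl_{p-1}$ with the correct grading conventions. Once the grading is pinned down correctly, the identification with $\Lambda\A^{\ast}$ is purely the classical Clifford-algebra statement and requires no further work beyond noting naturality of the symbol map. I would present the argument by first establishing the associated-graded description as a lemma-internal computation, then citing the symbol isomorphism for the final identification.
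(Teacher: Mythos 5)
Your proposal is correct and takes essentially the same route as the paper: the paper's own proof also observes that $\Ee_{|\A}$ is the graded bundle associated to the filtration $\{\tilde\Cl_k\}$ and then invokes the standard symbol-map identification of $\mathrm{gr}\,\Cl(\A\otimes\Cc)$ with $\Lambda\A^\ast$ (citing \cite{BGV}). You simply spell out, more explicitly than the paper does, why the Taylor-expansion description in Proposition \ref{Prop:taylor} yields the associated graded and why the identification is frame-independent.
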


\begin{proof}
Note first that the filtration of the Clifford algebra $\Cl(\A \otimes \Cc)$ by degree has associated to it
a graded algebra which identifies with the exterior algebra $\Lambda \A^{\ast}$, cf. \cite{BGV}.
The rescaled bundle $\Ee$ associated to the filtration $\{\tilde \Cl_k\}$ by Clifford degree restricts to the graded bundle
associated to $\{\tilde \Cl_k\}$. By combining these two facts the assertion follows.
\end{proof}

A more direct proof would rely on the intuition that $\Ee$ is just the bundle obtained from $\Hom(W)$
by replacing over $t \not= 0$ the Clifford bundle $W$ (lifted to $\G$) with the Clifford bundle $W_t$
which is a $\G$-invariant bundle such that over each fiber $\G_x$ it is the Clifford bundle $W_x^t$ associated to the Riemannian
metric $t g_x(\cdot)$. 

From Lemma \ref{Lem:Cl} and the definition of the rescaling we obtain the following lemma.

\begin{Lem}
Let $\G_{\Delta} := \{\gamma \in \Gad : s(\gamma) = r(\gamma)\} \subset \Gad$. Then for $t \not= 0$ the supertrace functional $\tr_s $
maps $C_c^{\infty}(\G_{\Delta}, \Ee_{|\G_{\Delta}})$ to $t^n C_c^{\infty}(\G)$.
\label{Lem:Cl2}
\end{Lem}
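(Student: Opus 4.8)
The plan is to work locally near the diagonal $\G_\Delta$ and reduce the statement to the pointwise statement that the supertrace of a section of the rescaled bundle picks up a factor $t^n$ relative to the supertrace on $\Hom(W)$. First I would use Proposition \ref{Prop:taylor}: a section $u \in C_c^\infty(\Gad,\Ee)$, restricted to a tubular neighborhood of $\A$ inside $\Gad$, has the form $u = \sum_{j=0}^n t^{n-j} u_j + t^{n+1}u'$ with $u_j \in C_c^\infty(\Gad,\tilde\Cl_{n-j}\otimes\End_{\Cl}(W))$ and $u'\in C_c^\infty(\Gad,\Hom(W))$. Restricting further to $\G_\Delta$, where source equals range and hence $\Hom(W)_{|\G_\Delta}\cong \Cl(\A\otimes\Cc)\otimes\End_{\Cl}(W)$, we may apply $\tr_s$ term by term.

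The key step is then the observation from Lemma \ref{Lem:Cl} that $\tr_{s|\Cl_{n-1}} = 0$, which (after tensoring with $\End_{\Cl}(W)$ and taking the supertrace on the twisted factor, which is an ordinary trace) forces $\tr_s(u_j) = 0$ for all $j \geq 1$, since each such $u_j$ lands in $\tilde\Cl_{n-1}\otimes\End_{\Cl}(W)$. Hence the only surviving contributions to $\tr_s(u)_{|\G_\Delta}$ come from the $j=0$ term $t^n u_0$, whose top Clifford component contributes via $\tr_s(e_1\cdots e_n) = (-2i)^{n/2}$, and from the remainder $t^{n+1}u'$. Both of these are divisible by $t^n$, with smooth coefficient functions on $\G_\Delta$ (the coefficients of $u_0$ against the volume element $e_1\cdots e_n$, respectively $t\cdot\tr_s(u')$), so $\tr_s(u)_{|\G_\Delta} \in t^n C_c^\infty(\G_\Delta)$. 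Identifying $\G_\Delta \cong \G$ (or more precisely the relevant copy of $\G$ sitting over $(0,1]$, together with $\A$ over $0$) via the groupoid structure, and noting that for $t\neq 0$ the restriction of $\G_\Delta$ is exactly a copy of $\G\times I^\ast$ fibered over $I^\ast$, one reads off the claimed statement. The smoothness and compact support of the quotient $t^{-n}\tr_s(u)$ follow because division by $t^n$ of a function that vanishes to order $\geq n$ at $\{t=0\}$ with the right coefficient structure again gives a smooth function, and $u$ was compactly supported to begin with.

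The main obstacle I anticipate is bookkeeping rather than depth: one must be careful that the extension $\tilde\Cl_{n-j}$ of the Clifford filtration via parallel transport along $N=\partial_t$ is genuinely compatible with the supertrace computation, i.e. that $\tr_s$ applied fiberwise on $\G_\Delta$ still annihilates the parallel-transported filtered pieces of degree $\leq n-1$. This is where Lemma \ref{Lem:rescaled} is useful: it identifies $\Ee_{|\A}\cong\Lambda\A^\ast\otimes\End_{\Cl}(W)$, so the associated-graded picture makes the degree count transparent and shows that only the top exterior degree $n$ survives under $\tr_s$. One should also check that the factor $t^n$ is uniform over $\G_\Delta$ and does not interact with the twisting bundle $\End_{\Cl}(W)$, on which $\tr_s$ restricts to an ordinary (fiberwise) trace; this is immediate since the rescaling only affects the Clifford factor. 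With these points in place the lemma follows directly from Proposition \ref{Prop:taylor} and Lemma \ref{Lem:Cl}.
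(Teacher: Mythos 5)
Your proof is correct and takes essentially the same route as the paper: the paper's one-line proof cites Lemma~\ref{Lem:Cl} (vanishing of $\tr_s$ on $\Cl_{n-1}$) together with Roe's Proposition~11.4 and ``the rescaling,'' while you make the same argument explicit by using the Taylor decomposition $u = \sum_{j=0}^n t^{n-j}u_j + t^{n+1}u'$ from Proposition~\ref{Prop:taylor}, applying $\tr_s$ term by term and noting that only the $j=0$ term (Clifford degree $n$) and the $t^{n+1}$ remainder survive, which together give a function divisible by $t^n$. This is precisely what the paper's citation compresses.
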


\begin{proof}
By  Lemma \ref{Lem:Cl} the supertrace vanishes on $\Cl_j$, $j<n$, so \cite[Proposition 11.4]{R} in connection with our rescaling  yields the assertion. 
\end{proof}

The above lemma ensures that the right coefficient is extracted when we apply the supertrace functional to the vector representation
of the groupoid heat kernel. 
\medskip



Consider the Lie groupoid $\G \rightrightarrows \Gop$. We fix $x_0 \in \Gop$. Then a \emph{parametrization} of $\G$ at $x_0$ is given by 
a tuple $(\varphi, \psi)$, where $\varphi \colon U \to \Gop$ and $\psi \colon U \times V \to \G$ are homeomorphisms, $U$ 
is a $0$-neighborhood in $\Rr^n$ and $V$ is a $0$-neighborhood in $\Rr^m$.  
The following conditions should hold:
\begin{enumerate}\renewcommand{\labelenumi}{(\roman{enumi})}
\item 
$\psi(0,0) = x_0$, 
\item
$r(\psi(u,v)) = \varphi(u)$, 
\item
$\psi(U \times \{0\}) = \psi(U \times V) \cap \Gop$. 
\end{enumerate}
Note that $r$ is a submersion at $x_0$. Conditions \emph{ii)} and \emph{iii)} imply $\varphi(u) = \psi(u, 0)$.

This induces a parametrization of $\A(\G)$, more precisely of the neighborhood $\A(\G)_{\varphi(U)}$ of the fiber $\A_{x_0}(\G)$,
which is given by $\theta \colon U \times \Rr^m \to \A(\G)$, $\theta(u, v) = \left(\varphi(u), \frac{\partial \psi}{\partial v}(u,0) v\right)$.


Then  $\alpha = \psi \circ \theta^{-1}$
implements a diffeomorphism of the neighborhood of $(x_0, 0)$ given by $\theta(U \times V)$ with $\psi(U \times V)$.
Additionally, $\alpha(\A_x(\G)) \subset \G_x$ holds for each $x \in \varphi(U)$.

Conversely, by choosing $\alpha$ as the exponential map $\Exp \colon \A(\G) \to \G$ defined in  Section \ref{section:IV}, we find $\varphi$ and $\psi$ with the above properties, see \cite[p.145]{NWX}.
Then $\alpha(\A_x(\G) \cap V) = \G_x \cap W$, $\alpha_x'(0) = \id_{\A_x(\G)}$ where 
$\alpha_x = \alpha_{|\alpha_x(\G) \cap W}$. 
\medskip


We are now in a position to give a proof of the main theorem.

\begin{proof}[Proof of Theorem \ref{Thm:locind}]

We apply the previously constructed continuous functional calculus $\Psi_{\Dd}$ adapted to the $\G$-invariant family 
of Dirac operators $(\Dirac_{x,t})_{(x,t) \in M \times I}$,
given by $\Dirac_{x,t} = t \Dirac_x$ and set $\Dd := (t \Dirac_x)_{(x, t) \in M \times I}$. 
For $f\in C_0(\Rr)$,  $\Psi_{\Dd}(f) \in C_r^{\ast}(\Gad, \Ee)$ by the construction of the functional calculus. Here $\Ee \to \Gad$ 
is the rescaled bundle introduced before Proposition \ref{Prop:taylor}. 
Recall the action of the functional calculus
\[
f(\Dirac_{x,t}) g(\gamma) = \pi_{x,t}(\Psi_{\Dd}(f)) g(\gamma) = (\Psi_{\Dd}(f) \ast g)(\gamma),
\quad g\in  L^2(\Gad_{x,t}).
\]
In our case this yields for $t>0$
\[
f(t \Dirac) g(\gamma) = \int_{\G_{s(\gamma)}} \Psi_{\Dd}(f)(\gamma \eta^{-1}) g(\eta) t^{-n} \,d\mu_{s(\gamma)}(\eta).
\]
Here the scaling factor $t^{-n}$ stems from the natural choice of Haar system on the adiabatic groupoid, cf. \cite[(6.8)]{LR}. 
For the function $f(x) = e^{-x^2}$ we see that the reduced kernel $\Psi_{\Dd}(f)$, as an operator on $\G$, is $t^n k_{t^2}$. Then we recall that by Theorem \ref{Thm:rapdecay} we have in fact $\Psi_{\Dd}(f) \in \S(\Gad, \Ee)$. 
Let  $l_t := \Psi_{\Dd}(e^{-x^2})_{|\G_{\Delta}}$ be the restriction to the diagonal in $\Gad$. Then $l_t(\gamma) = t^n k_{t^2}(\gamma)$ for $t \not= 0$ and $\gamma \in \G_{\Delta_t}$. For $D = \varrho(\Dirac)$ we obtain 
\[
\TrsV(e^{-t D^2}) = \davintV \tr_s(\kappa_t(x, x)) \,d\mu(x)
\]
where $\mu = \mu_g$ is the density defined by the fixed compatible metric $g$ and $\kappa_t$ denotes the heat kernel of $e^{-tD^2}$.
By the representation Theorem \ref{Thm:repr} and Theorem \ref{Thm:rapdecay} we obtain that $\kappa_t \in \SV(M)$.
Denote by $\tilde{l}_t$ the vector representation of $l_t$.
The equation above makes sense for $t \not= 0$. 
We have  that $\tr_s(\kappa_{t^2|\Delta})$ identifies with $t^{-n} \tr_s(\tilde{l}_t)$.
Since $t^{-n} \tr_s(\tilde{l}_t)$ by Lemma \ref{Lem:Cl2} extends smoothly to $t = 0$, we have $t^{-n} \tr_s(\tilde{l}_t) = \tr_s(\tilde{l}_0) + o(t)$.
From $\Psi_{\Dd}(e^{-x^2})_{|t \not= 0} = t^n k_{t^2}$ and $\Psi_{\Dd}(e^{-x^2})_{|t=0} = k_{u|u=1}$ we obtain
\begin{align*}
\TrsV(e^{-t D^2}) &= \davintV t^{-\frac{n}{2}} \tr_s(\tilde{l}_{t^{\frac{1}{2}}}) \,d\mu = \davintV \tr_s(\tilde{l}_0)\,d\mu + o(t^{\frac{1}{2}}).
\end{align*}
Hence we have reduced the task to calculating $\tr_s(\tilde{l}_0)$. 
We calculate the kernel $l_0$ on the groupoid using the Lichnerowicz theorem applied to the fibers of the integrating groupoid.
Denote by $\varphi \colon U \to \Gop, \ \psi \colon U \times V \to \G$ the above parametrization of $\G$ around a fixed $x_0 \in M$.
Recall that, by definition, $\alpha_x = \alpha_{|\A_x(\G) \cap V}$  is induced by the exponential map $\exp_x$ on the fiber $\G_x$.
Write $\alpha_x(\gamma) = (a_1, \cdots, a_m) =: a$ for the corresponding geodesic coordinates.
Consider the induced parametrization of $\Gad$ given by $\Phi \colon U^{ad} \times V \to \Rr^n \times \Rr^m \times \Rr$, where $U^{ad} = U \times \Rr$.
Restrict this map to the chart $V \times \{x\} \times \{t\}$ and call the restriction $\Phi_{x,t}$. 
An elementary calculation yields $\Phi_{x,t}(\eta) = \frac{1}{t} (\alpha_x(\eta) - a)$. 
Then the Lichnerowicz theorem on the complete manifold $(\G_x, g_x)$ yields for $b = \Phi_{x,t}(\eta)$
\begin{eqnarray*}
\lefteqn{\Dirac_{x,t}^2 f(\eta) = \Dirac_{x,t}^2 f(\Phi_{x,t}^{-1}(b_1, \cdots, b_m))} \\
&=& t^2 \Dirac_x^2 f(\alpha_x^{-1}((tb_1, \cdots, tb_m) + a)) \\
&=& -t^2 \sum_{i} \left(\frac{1}{t} \partial_i^x  + \frac{1}{4} \sum_{j} \frac{1}{t} \left(R_{ij}^x(a_j + tb_j) \right) \right)^2 f(\eta)\\
&&+ \left(\sum_{i < j} F^{W_x / S}(e_i, e_j)(a_j + t b_j)(a_j + t b_j) + \frac{t^2}{4} \kappa\right) f(\eta)\\
&=& -\sum_{i} \left(\partial_i^x  + \frac{1}{4} \sum_{j} R_{ij}^x(a_j + t b_j) \right)^2 f(\eta)\\
&&+ \left(\sum_{i < j} F^{W_x / S}(e_i, e_j)(a_j + t b_j)(a_j + t b_j) + \frac{t^2}{4} \kappa\right) f(\eta).
\end{eqnarray*}

The right hand side depends smoothly on $t$ up to and including $t = 0$. In the limit as $t \to 0$ we obtain
\[
\Dirac_{x,0}^2 = -\sum_{i} \left(\partial_i^x + \frac{1}{4} \sum_{j} R_{ij}^x a_j\right)^2 + \sum_{i < j} F^{W_x / S}(e_i, e_j)(a_j)(a_j). 
\]
The remainder of the argument consists in the solution of the differential equation of the heat kernel of $\Dirac_{x,0}^2$, which one
recognizes as a harmonic oscillator with twisting. We can therefore use the analysis in \cite{BGV} to obtain the solution
in terms of a formal power series in the scalar curvature $R_{ij}^x$ and the exponential of the twisting bundle $\exp F^{W_x / S}$.
By the $\G$-invariance of the curvature tensor as well as the twisting curvature and the Lichnerowicz theorem
for Lie manifolds given in Theorem \ref{Thm:Lichnerowicz}, it follows from \cite{BGV}, p. 164 and \cite{R}, Proposition 12.25, 12.26 that we obtain the integrand $\Aroof \wedge \exp F^{S/W}$ 
in the trace formula. Thus we have shown that
\[
\lim_{t \to 0^+} \TrsV(e^{-tD^2}) = \davintV \Aroof \wedge \exp F^{W / S} \,d\mu.
\]
To obtain the limit $t \to \infty$ consider
\[
\lim_{t \to \infty} \TrsV(e^{-t D^2}) - \lim_{t \to 0^{+}} \TrsV(e^{-t D^2}) = \int_{0}^{\infty} \partial_t \TrsV(e^{-t D^2})\,dt.
\]

We have $\partial_t \TrsV(e^{-tD^2}) = \TrsV(\partial_t e^{-t D^2})$. 
The latter equals $- \frac{1}{2} \TrsV([D, D e^{-tD^2}]_s)$ since by the odd grading of $D$ we have $D^2 e^{-tD^2} = \frac{1}{2} [D, D e^{-tD^2}]_s$, where $[\cdot,\cdot]_s$ denotes
the super commutator. Setting $\etaV(D) := \frac{1}{2} \int_{0}^{\infty} \TrsV([D, D e^{-t D^2}]_s)\,dt$ this completes the proof of the index theorem.
\end{proof}

\section{The Fredholm index}

\subsection*{Fredholm conditions}

In this final section we will first address the question when the renormalized index equals the Fredholm index for Dirac operators which are Fredholm on the appropriate Sobolev spaces. 
Consider $(M, \A, \V)$ a non-degenerate spin Lie manifold with an integrating Lie groupoid $\G \rightrightarrows M$ which is Hausdorff and strongly amenable.
Let $W \to M$ be a Clifford module over $\Cl(\A)$ and $\Psi_{\V}^m(M; W)$ the calculus of pseudodifferential operators
on the Lie manifold, acting on sections of the bundle $W$, cf.~\cite{ALN}.
Let $r^{\ast} W \to \G$ be the pullback bundle along the range map of the groupoid. By the representation Theorem \cite[Theorem 3.2]{ALN} and \cite{N} the vector representation
furnishes a canonical isomorphism $\Psi_{\V}^m(M; W) \cong \Psi^m(\G; r^{\ast} W)$ with the groupoid pseudodifferential calculus. 
To any closed hyperface $F \in \F_1(M)$ of $M$ we associate a restriction homomorphism $\R_F \colon \Psi^m(\G; r^{\ast} W) \to \Psi^m(\G_F; r^{\ast} W_{|F})$
given by $P = (P_x)_{x \in M} \mapsto (P_x)_{x \in F}$. The combined homomorphism $\R(P) := \oplus_{F \in \F_1(M)} \R_F(P)$ is called the \emph{indicial symbol}. 

\begin{Def}
A pseudodifferential operator $P \in \Psi_{\V}^m(M; W)$ is \emph{fully elliptic} if its principal symbol $\sigma_m(P)$ and its indicial symbol $\R(P)$ are both pointwise invertible.
\label{Def:fullyelliptic}
\end{Def}

A Lie groupoid will be called \emph{strongly amenable} if the natural representation of its $C^{\ast}$-algebra $C^{\ast}(\G)$
on $\L(\H)$ is injective for $\H = L^2(M_0)$. We have the following auxiliary result from \cite{N}.
\begin{Thm}
Let $(M, \A, \V)$ be a Lie manifold such that there is a Lie groupoid $\G \rightrightarrows M$ that is Hausdorff and strongly amenable such that $\A(\G) \cong \A$.
A pseudodifferential operator $P \in \Psi_{\V}^m(M; W)$ is Fredholm $P \colon H_{\V}^{s}(M; W) \to H_{\V}^{s-m}(M; W)$ if and only if
it is fully elliptic.
\label{Thm:fullyelliptic}
\end{Thm}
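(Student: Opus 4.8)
The plan is to reduce the Fredholm property to the invertibility of two ``symbols'' living in suitable quotient $C^\ast$-algebras, following the mechanism of \cite{N}. First I would reduce to the case $m=s=0$. Choosing an invertible, order-reducing family $\Lambda^r\in\Psi_\V^r(M;W)$, $r\in\Rr$, with pointwise invertible indicial symbols (for instance the complex powers $(\id+\Delta_\V)^{r/2}$ of a positive Lie Laplacian $\Delta_\V$), the operator $P\colon H_\V^s(M;W)\to H_\V^{s-m}(M;W)$ is Fredholm if and only if $\Lambda^{s-m}P\Lambda^{-s}\in\Psi_\V^0(M;W)$ is Fredholm on $L^2(M_0;W)$; since this conjugation multiplies $\sigma_m(P)$ and $\R(P)$ by pointwise invertible factors, the resulting operator is fully elliptic precisely when $P$ is. So it suffices to show that an order-zero operator is invertible modulo $\K(L^2(M_0;W))$ if and only if it is fully elliptic.

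Next I would set up the ideal structure inside $\mathcal A:=\overline{\Psi_\V^0(M;W)}\subset\L(L^2(M_0;W))$. By the representation theorem \cite[Theorem 3.2]{ALN}, $\Psi_\V^m(M;W)\cong\Psi^m(\G;r^\ast W)$, and strong amenability makes the vector representation faithful and yields $C^\ast(\G)=C_r^\ast(\G)$; hence the norm closure $\mathcal J$ of $\Psi_\V^{-\infty}(M;W)$ is canonically $C_r^\ast(\G;r^\ast W)$, appearing as the kernel of the principal symbol in
\[
0\longrightarrow\mathcal J\longrightarrow\mathcal A\xrightarrow{\ \sigma_0\ }C_0(S^\ast\A;\End W)\longrightarrow0 .
\]
Because $\G_{|M_0}\cong M_0\times M_0$, the part of $\mathcal J$ supported over the interior is exactly $C_r^\ast(\G_{|M_0})\cong\K(L^2(M_0;W))$, and restriction to the boundary produces
\[
0\longrightarrow\K(L^2(M_0;W))\longrightarrow\mathcal J\xrightarrow{\ \R\ }C_r^\ast(\G_{|\partial M};r^\ast W)\longrightarrow0 ,
\]
where $\R$ is the indicial map, given componentwise by the $\R_F$ over the faces $F\in\F_1(M)$.

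Assembling the two sequences, $\K(L^2(M_0;W))$ is an ideal of $\mathcal A$, and on the quotient $\mathcal A/\K$ the pair $(\sigma_0,\R)$ — with $\R$ extended to $\overline{\Psi^0(\G_{|\partial M};r^\ast W)}$ — is jointly injective. Thus an order-zero $P$ is Fredholm exactly when $\sigma_0(P)$ is invertible on $S^\ast\A$ and every $\R_F(P)\in\Psi^0(\G_F;r^\ast W_{|F})$ is invertible in that algebra. I would then iterate the criterion: each $\G_F$ is again Hausdorff and strongly amenable, so invertibility of $\R_F(P)$ as a $\G_F$-operator is equivalent to pointwise invertibility of its own principal and indicial symbols, and an induction on the number of hyperfaces shows that the full condition is exactly the pointwise invertibility of $\sigma_m(P)$ and $\R(P)$, i.e.\ full ellipticity. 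The sufficiency direction ($\Leftarrow$) can alternatively be obtained directly by patching a principal-symbol parametrix, available in the Lie calculus of \cite{ALN}, with parametrices of the indicial operators.

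The main obstacle is the second step: identifying the closure of the regularizing ideal with $C_r^\ast(\G;r^\ast W)$, recognizing the compacts as precisely the piece supported over $M_0\times M_0$, and establishing that $(\sigma_0,\R)$ separates $\mathcal A/\K$. Strong amenability is what carries this through — it gives $C^\ast(\G)=C_r^\ast(\G)$ and the faithfulness of the boundary representations, which is what powers the necessity direction: if some $\R_F(P)$ fails to be invertible, faithfulness produces a Weyl sequence for $P$ escaping to the face $F$, obstructing the Fredholm property. Without the amenability hypothesis the quotient $\mathcal A/\K$ need not be detected by these symbols and the statement fails.
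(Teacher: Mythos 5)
The paper does not actually prove this theorem: it is stated as an auxiliary result and simply cited from \cite{N}, with no argument supplied (``We have the following auxiliary result from \cite{N}.''). There is therefore no paper proof to compare your sketch against; what you have reconstructed is, in outline, the $C^\ast$-algebraic argument that appears in the cited reference and in \cite{LMN}. Your main components are the right ones and match that source: conjugation by invertible order-reductions to pass to $m=s=0$; identification of the norm closure of $\Psi_\V^{-\infty}(M;W)$ with $C^\ast_r(\G;r^\ast W)$ (strong amenability collapsing full and reduced norms and making the vector representation faithful); the boundary-restriction sequence $0\to\K(L^2(M_0;W))\to C^\ast_r(\G;r^\ast W)\to C^\ast_r(\G_{|\partial M};r^\ast W)\to0$ coming from $\G_{|M_0}\cong M_0\times M_0$; and the inverse-closedness of the norm-completed $\Psi_\V^0$ modulo $\K$ inside the Calkin algebra, which lets you test Fredholmness with symbols.

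Two cautions on the sketch. First, the step from ``$\R(P)$ invertible in the $C^\ast$-quotient'' to the pointwise invertibility demanded by Definition \ref{Def:fullyelliptic} is not really an induction on the number of hyperfaces; the substantive technical input is that the boundary regular representations $\{\pi_x\}_{x\in\partial M}$ form an \emph{exhaustive} family for the quotient algebra, and strong amenability is what supplies this and hence the Weyl-sequence construction in the necessity direction. You allude to this (``faithfulness produces a Weyl sequence''), but your write-up conflates the two-level ideal filtration with the exhaustivity lemma, which are logically separate. Second, when assembling the two exact sequences, the codomain of $\sigma\oplus\R$ must be the \emph{restricted} direct sum $\SigmaV$ of \eqref{FhSES}, i.e.\ the pullback enforcing the compatibility $r_F\circ\sigma=\sigma_F\circ\R_F$ over each face $F$; this is what makes $\sigma\oplus\R$ surjective and the full-symbol sequence exact, so it should be stated rather than left implicit in the phrase ``jointly injective on the quotient''.
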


We are now in a position to give a proof of Theorem \ref{Thm:Fh}. 
\begin{proof}[Proof of Theorem \ref{Thm:Fh}]
We fix the degeneracy index $k$ of $(M, \A, \V)$, cf. Definition \ref{VTr}. 
Denote by $\S_{tr} = \rho^k\,  \SV(M; W)$ the ideal of all elements of $\SV(M; W)$ vanishing to order $k$ at the boundary strata of $M$, and consider the induced 
short exact sequence
\[
\xymatrix{
\S_{tr} \ar@{>->}[r] & \SV(M; W) \ar@{->>}[r]^-{\Rk} & \S
}
\]
with the quotient $\S$ and the quotient map $\Rk$. 
The result follows by an application of the argument from \cite[Section 1.2]{LMP2} to the above short exact sequence.
We include some details for the convenience of the reader. The supertrace is a linear functional $\Tr_s \colon \S_{tr} \to \Cc$ such that $\Tr_s(ax) = \Tr_s(xa)$ for all $a \in \SV(M; W)$ and $x \in \S_{tr}$. 
The 
renormalized super-trace $\TrV_s \colon \SV(M; W) \to \Cc$ is by definition a linear extension of $\Tr_s$. 
Since $D$ is assumed to be fully elliptic, it is a Fredholm operator by Theorem \ref{Thm:fullyelliptic}. 
The projection onto the kernel of $D$ is contained in $\S_{tr}$, i.e. $P_{\ker D} \in \S_{tr}$ since $\SV$ is spectrally invariant by Proposition \ref{Prop:Schwartz}. 
Define the cyclic cocycle $\mu$ via $\mu(\Rk(a_0), \Rk(a_1)) = \TrV_s[a_0, a_1]$ for $a_0, a_1 \in \SV(M; W)$. 
In particular we define 
\begin{align*}
A_0(t) := D e^{-\frac{1}{2} t D^2}, \\
A_1(t) := \int_{t}^{\infty} D e^{-(\frac{t}{2} - s) D^2} \,ds.
\end{align*}

It can be checked that $A_0(t), \ A_1(t) \in \SV(M; W)$ for $t > 0$, cf. Theorem \ref{Thm:rapdecay}.
Denoting by $b \colon C_{\lambda}^1(\S) \to C_{\lambda}^2(\S)$ the boundary map in cyclic cohomology we check that 
\begin{align}
& b(A_0 \otimes A_1) = 2 \int_{t}^{\infty} D^2 e^{-s D^2}\,ds = 2 (e^{-t D^2} - P_{\ker D}). \label{bdycycle} 
\end{align}

Denote by $\brel, \ \Brel$ the boundary map in relative cyclic cohomology, i.e. 
\[
\brel = \begin{pmatrix} b & -\Rk^{\ast} \\ 0 & - b \end{pmatrix}, \ \Brel = \begin{pmatrix} B & 0 \\ 0 & - B \end{pmatrix}.
\]

We notice that $(\TrV_s, \mu)$ yields a relative cyclic cocycle with $\brel\begin{pmatrix} \TrV_s \\ \mu \end{pmatrix} = 0$. 
With \eqref{bdycycle} we obtain
\[
\Rk(e^{-t D^2}) = \Rk(e^{-t D^2} - P_{\ker D}) = \frac{1}{2} b(\Rk(A_0) \otimes \Rk(A_1))
\]

which yields
\[
\brel \begin{pmatrix} e^{-t D^2} \\ -\frac{1}{2} \Rk(A_0) \otimes \Rk(A_1) \end{pmatrix} = 0.
\]

We define the class in relative cyclic homology $\mathrm{EXP}_t(D) := (e^{-tD^2}, -\frac{1}{2} \Rk(A_0) \otimes \Rk(A_1))$. 
Denote by $\relb = \begin{pmatrix} b & 0 \\ -\Rk_{\ast} & -b \end{pmatrix}$ the boundary map in cyclic homology.
Then we have
\[
\relb \begin{pmatrix} \frac{1}{2} A_0 \otimes A_1 \\ 0 \end{pmatrix} = \begin{pmatrix} e^{-tD^2} - P_{\ker D} \\ -\frac{1}{2} \Rk(A_0) \otimes \Rk(A_1) \end{pmatrix}.
\]

We obtain a class $[\mathrm{EXP}_t(D)] = [(P_{\ker D}, 0)]$ in $\H_0^{\lambda}(\SV(M; W), \S)$. The latter equals $[P_{\ker D}]$ as a class in periodic cyclic homology $\HP_0(\S_{tr})$ using excision.
Altogether we have $[\mathrm{EXP}_t(D)] = [(P_{\ker D}, 0)]$ in $\HC_0^{\lambda}(\SV(M; W), \S)$, hence is independent of $t$.
The pairing of cyclic homology with cyclic cohomology and the definition of the Fredholm index yields
\begin{align*}
\ind D &= \Tr_s(P_{\ker D}) = \scal{(\TrV_s, \mu)}{(e^{-t D^2}, -\frac{1}{2} \Rk(A_0) \otimes \Rk(A_1))} \\
&= \TrV_s(e^{-t D^2}) - \frac{1}{2} \mu(\Rk(A_0) \otimes \Rk(A_1)) \\
&= \TrV_s(e^{-t D^2}) - \frac{1}{2} \TrV_s([A_0, A_1]). 
\end{align*}

Since $\TrV_s[A_0, A_1] = \TrV_s\left(\int_t^{\infty} D^2 e^{-s D^2} \,ds\right) = 2 \etaV(D)$ the assertion follows.
\end{proof}

\subsection*{Applications}

We turn now to applications of the index theorem \ref{Thm:locind} to the geometry of non-compact manifolds.
In the following we fix a Lie manifold as in the beginning of this section satisfying the assumptions of Theorem \ref{Thm:Fh}. 
The first observation follows by a well-known 
application of the Lichnerowicz formula \ref{Thm:Lichnerowicz}.

\begin{Thm}[Lichnerowicz vanishing]
Assume there is a compatible metric $g = g_{\A}$ of positive scalar curvature $\kappa > 0$, then $\ind(D) = 0$ for any spin Dirac operator $D = D^S$.
\label{Thm:vanishing}
\end{Thm}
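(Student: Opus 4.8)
The plan is to specialize the Lichnerowicz formula of Theorem~\ref{Thm:Lichnerowicz} to the untwisted case and to exploit the compactness of $M$. Here $W = S$ is the spinor bundle itself, so the twisting curvature vanishes, $F^{S/S} = 0$, and the formula reduces to $D^2 = \Delta^S + \tfrac{\kappa}{4}$ with $\Delta^S$ the spinor connection Laplacian. Since $M$ is a compact manifold with corners and $\kappa \in C^\infty(M)$, the hypothesis $\kappa > 0$ gives a uniform bound $\kappa \ge \kappa_0 := \min_M \kappa > 0$. Hence for every $\psi \in \Gamma_c^\infty(M_0; S)$,
\[
\|D\psi\|_{L^2}^2 = \langle D^2\psi,\psi\rangle = \|\nabla^S\psi\|_{L^2}^2 + \frac14\int_M \kappa\,|\psi|^2\,d\mu \ \ge\ \frac{\kappa_0}{4}\,\|\psi\|_{L^2}^2,
\]
and by density this estimate extends to all of $H_{\V}^1(M; S)$. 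In particular $\ker D = 0$; as $D = D^+\oplus D^-$ is odd and formally self-adjoint, both $\ker D^+$ and $\ker D^- = \ker (D^+)^\ast$ are trivial.

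Next I would verify that $D^S$ is fully elliptic, so that it is Fredholm by Theorem~\ref{Thm:fullyelliptic}. Its principal symbol $\sigma_1(D^S)\xi = i c(\xi)$ is invertible for $\xi \ne 0$, as noted after Definition~\ref{Def:Dirac1}. For the indicial symbol, recall that for each hyperface $F \in \F_1(M)$ the operator $\R_F(D^S) = (D^S_x)_{x\in F}$ is the $\G_F$-invariant family of spinor Dirac operators on the complete fibers $(\G_{F,x}, g_x)$, which have bounded geometry. Since the metric family $(g_x)$ is $\G$-invariant, so is its scalar curvature, and the fiberwise scalar curvature of $\G_{F,x}$ is the pullback of $\kappa$ along the range map; in particular it is again $\ge \kappa_0 > 0$. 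The Lichnerowicz formula on the complete spin manifold $(\G_{F,x}, g_x)$ then gives $(D^S_x)^2 \ge \tfrac{\kappa_0}{4}$, so each $D^S_x$ is invertible as an unbounded self-adjoint operator on $L^2(\G_{F,x})$. Thus $\R_F(D^S)$ is pointwise invertible for every $F$, and $D^S$ is fully elliptic.

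It remains to assemble these facts: a Fredholm operator with $\ker D^+ = \ker D^- = 0$ has $\ind(D^S) = \dim\ker D^+ - \dim\ker D^- = 0$, which is the assertion. Equivalently one may invoke Theorem~\ref{Thm:Fh} to identify $\ind(D^S)$ with $\indV(D^S)$ and observe that $D^2 \ge \tfrac{\kappa_0}{4}$ forces $\TrsV(e^{-tD^2}) \to 0$ as $t \to \infty$, whence $\indV(D^S) = 0$; in the index formula of Theorem~\ref{Thm:locind} this reflects a cancellation between $\davintV\Aroof\,d\mu$ and $\etaV(D)$, since $\exp F^{S/S} = 1$. The one step requiring genuine care is the full ellipticity, i.e.\ the invertibility of the indicial family: it works precisely because positivity of the scalar curvature is a closed condition that persists on the boundary strata of the compact manifold $M$, so that the Lichnerowicz positivity propagates to the boundary groupoids $\G_F$.
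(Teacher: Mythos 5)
Your core argument -- the Lichnerowicz identity combined with a uniform lower bound on the scalar curvature -- is the same as the paper's, but you prove substantially more. The paper's proof only shows that $D^2 s = 0$ for $s \ne 0$ is impossible and then concludes $\ind(D) = 0$; this tacitly assumes $D$ is Fredholm, which is never addressed. You close that gap by observing that positive scalar curvature actually forces full ellipticity: the principal symbol is Clifford multiplication, hence invertible, and for each hyperface $F$ the indicial family $\R_F(D^S) = (D^S_x)_{x\in F}$ is pointwise invertible, because the right-invariant fiber metric on $\G_x$ has scalar curvature $\kappa\circ r \ge \kappa_0 := \min_M \kappa > 0$ (here compactness of $M$ is used), so the fiberwise Lichnerowicz identity on the complete bounded-geometry manifolds $\G_x$ gives $(D^S_x)^2 \ge \kappa_0/4$ and a spectral gap. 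Theorem~\ref{Thm:fullyelliptic} then yields Fredholmness, and $\ker D^{\pm}=0$ gives $\ind(D)=0$. This is a genuine strengthening of the paper's proof, and arguably what a complete proof should say.

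One caveat concerning your closing \emph{equivalently} remark: the claim that the spectral gap $D^2 \ge \kappa_0/4$ by itself forces $\TrsV(e^{-tD^2}) \to 0$ as $t\to\infty$ is not automatic. The renormalized supertrace $\TrsV$ is not dominated by the operator norm, so decay of $\|e^{-tD^2}\|$ does not immediately entail decay of $\TrsV(e^{-tD^2})$; one would need to show the heat kernel tends to zero in the $\SV$-topology. Since you already establish $\ind(D)=0$ directly via the kernel/cokernel count, the remark is dispensable, or should be stated only as a consequence (via Theorem~\ref{Thm:Fh}) rather than as an independent alternative argument.
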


\begin{proof}
Assume for the sake of a contradiction $D^2 s = 0$ for some $s \not= 0$. The Lichnerwicz formula for spin Dirac operators without twisting yields
\[
\frac{1}{4} \kappa = \scal{D^2 s}{s} - \|\nabla^{\ast} \nabla s\|^2 = -\|\nabla^{\ast} \nabla s\|^2 \leq 0. 
\]

This is a contradiction and hence $\ker D^2 = \{0\}$ and, in particular, $\ind(D) = 0$.
\end{proof}

A direct application of Theorem \ref{Thm:locind} combined with \ref{Thm:vanishing} yields the following result.
\begin{Cor}
Assume that there is compatible metric $g$ with positive scalar curvature, then 
\[
\etaV(D) = -\davintVM \Aroof \,d\mu
\]

for any spin Dirac operator $D = D^S$.
\label{Cor:obstruction}
\end{Cor}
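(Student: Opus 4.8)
The plan is to read the desired identity straight off the index formula of Theorem~\ref{Thm:locind} by specializing it to the untwisted spin Dirac operator and then annihilating the left-hand side with the vanishing theorem. First I would take $W = S$, the spinor bundle itself regarded as a $\Cl(\A)$-module; then $\End_{\Cl(\A)}(S) \cong \Cc$, the admissible connection on $S$ is the spin connection, the twisting curvature $F^{W/S} = F^{S/S}$ vanishes identically, and $\exp F^{W/S} = 1$. Consequently Theorem~\ref{Thm:locind} collapses to
\[
\indV(D^S) = \davintVM \Aroof \, d\mu + \etaV(D^S),
\]
so the whole statement reduces to showing $\indV(D^S) = 0$.

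For that, Theorem~\ref{Thm:vanishing} applies: with $F^{S/S} = 0$ the Lichnerowicz formula of Theorem~\ref{Thm:Lichnerowicz} becomes $D^2 = \Delta^S + \kappa/4$ with $\Delta^S \geq 0$ and $\kappa > 0$, so $\ker D^S = \{0\}$ on $L^2$ and the Fredholm index $\ind(D^S)$ vanishes. To pass from $\ind(D^S)$ to $\indV(D^S)$ I would invoke Theorem~\ref{Thm:Fh}, which is legitimate since we are by assumption in its setting (strongly amenable integrating groupoid of polynomial growth) and since the same Bochner identity applied to the Riemannian metrics induced by $g$ on the $s$-fibers of the reductions $\G_F$, $F \in \F_1(M)$, makes the indicial operators $\R_F(D^S)$ pointwise invertible; hence $D^S$ is fully elliptic, Fredholm by Theorem~\ref{Thm:fullyelliptic}, and $\indV(D^S) = \ind(D^S) = 0$. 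Substituting $\indV(D^S) = 0$ into the displayed equality yields $\etaV(D^S) = -\davintVM \Aroof \, d\mu$, as claimed.

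The only point needing care --- and it is minor --- is checking that positivity of the scalar curvature is genuinely inherited by the data that control Fredholmness, i.e.\ that the $\G$-invariant family of fiber metrics on $\G$ and on each $\G_F$ induced by the compatible metric $g$ again has positive scalar curvature, so that the indicial operators are invertible and Theorem~\ref{Thm:Fh} is applicable; this is immediate from $\G$-invariance and the fact that the scalar curvature of a fiber metric is the pullback of $\kappa$. Everything else is pure substitution into an already established formula.
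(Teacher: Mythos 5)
Your proof follows the paper's own route exactly: specialize Theorem~\ref{Thm:locind} to $W = S$ so that $F^{W/S}=0$ and $\exp F^{W/S} = 1$, apply the Lichnerowicz vanishing (Theorem~\ref{Thm:vanishing}) to get $\ind(D^S) = 0$, and use Theorem~\ref{Thm:Fh} to identify this with $\indV(D^S)$. The paper simply asserts the corollary is a direct consequence of Theorems~\ref{Thm:locind} and~\ref{Thm:vanishing}, so your explicit check that positive scalar curvature on the compact $M$ gives a uniform spectral gap $D_x^2 \geq \kappa_0/4 > 0$ on the $s$-fibers of each $\G_F$, whence the indicial operators are invertible and $D^S$ is fully elliptic (so Theorem~\ref{Thm:Fh} is actually applicable), spells out a step that the paper leaves implicit.
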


We discuss another obstruction to the existence of positive scalar curvature metrics. 
As a preperation we recall that in the pseudodifferential calculus $\Psi_{\V}^m(M; W)$ in order $m=0$ there is the principal symbol $\sigma \colon \Psi_{\V}^0(M; W) \to C^{\infty}(S^{\ast} \A)$
and the \emph{indicial symbol} $\R \colon \Psi_{\V}^0(M; W) \to \oplus_{F \in \F_1(M)} \Psi_{\V(F)}^0(F; W_{|F})$. 
We have a short exact sequence by \cite{N}
\begin{align}
\xymatrix{
\K_{M_0} \ar@{>->}[r] & \Psi_{\V}^0(M; W) \ar@{->>}[r]^-{\sigma \oplus \R} & \SigmaV.
} \label{FhSES}
\end{align}

Here $\K_{M_0} = \K(L^2(M_0))$ denotes the compact operators on $M_0$. By $\SigmaV$ we denote the restricted direct sum
of $C^{\infty}(S^{\ast} \A)$ and $\oplus_F \Psi_{\V(F)}^0(F; W_{|F})$, i.e. a pullback. The situation is most easily summarized in terms of the pullback diagram together with
the full symbol $\sigma \oplus \R$ and restriction maps $(r_F)_{F \in \F_1(M)}$:
\[
\xymatrix{
\SigmaV \ar@{->}[d(1.8)]_{\pi_2} \ar@{->}[r(1.5)]^-{\pi_1} & &  C^{\infty}(S^{\ast} \A) \ar[d(1.8)]^{(r_F)_{F \in \F_1(M)}} & & \\
& \ar@{->>}[ul]_{\sigma \oplus \R} \ar@{->>}[dl]_{\R} \Psi_{\V}^0(M;W) \ar@{->>}[ur]^{\sigma} \ar@{->>}[dr]_{} & \\
\oplus_{F} \Psi_{\V(F)}^{0}(F;W_{|F}) \ar@{->>}[r(1.4)]^-{\oplus_{F} \sigma_F} & &  \oplus_{F} C^{\infty}(S^{\ast} \A_{|F}) & &
}
\]

For any Fredholm pseudodifferential operator $P \in \Psi_{\V}^0(M; W)$ we see from  \eqref{FhSES} that the index $\ind(P)$ only depends on the full symbol.
Since in general the $K$-theory group of the symbol algebra $\SigmaV$ is difficult to understand we consider next a deformation groupoid called \emph{Fredholm groupoid}, see e.g. \cite{CLM}. 
Start with the adiabatic groupoid $\Gad = \A(\G) \times \{0\} \cup \G \times (0, 1] \rightrightarrows M \times [0,1]$ with the topology as defined in Section \ref{section:IV}.
Setting $M_{\F} := (M \times [0,1]) \setminus (\partial M \times \{1\})$ we consider the subgroupoid $\G_{\F} \rightrightarrows M_{\F}$
which is written as a set $\G_{\F} = \A(\G) \times \{0\} \cup \G_{|\partial M} \times (0,1) \cup M_0 \times M_0 \times (0, 1]$. 
The groupoid $\G_{\F}$ is by construction a sub Lie groupoid of $\Gad$.
We refer to $\G_{\F}$ as the \emph{Fredholm groupoid}. Note that $M_0 \times M_0 \times (0,1]$ is a saturated open dense
subgroupoid of $\G_{\F}$ and set $\Tau := \G_{\F} \setminus (M_0 \times M_0 \times (0,1])$. 
Setting $M_{\partial} := M_{\F} \setminus (M_0 \times (0,1])$ we obtain again a Lie groupoid $\Tau \rightrightarrows M_{\partial}$.
As a set $\Tau$ is given by $\A(\G) \times \{0\} \cup \G_{\partial M} \times (0,1)$. It is the latter groupoid which is the natural home for the
full symbol of a pseudodifferential operator in the groupoid or the Lie pseudodifferential calculus.
Denote by $e_{0} \colon C^{\ast}(\G_{\F}) \to C^{\ast}(\Tau), \ e_1 \colon C^{\ast}(\G_{\F}) \to C^{\ast}(M_0 \times M_0 \times \{1\})$ the evaluations. 
Note that since $\G$ is (strongly) amenable by assumption we obtain that $\G_{\F}$ is (strongly) amenable as well.
We have the short exact sequence
\[
\xymatrix{
C^{\ast}(M_0 \times M_0) \otimes C(0,1] \ar@{>->}[r] & C^{\ast}(\G_{\F}) \ar@{->>}[r]^{e_0} & C^{\ast}(\Tau). 
}
\]

The $C^{\ast}$-algebra $C^{\ast}(M_0 \times M_0)$ is $\ast$-isomorphic to the compact operators on $M_0$, i.e. $\K_{M_0} = \K(L^2(M_0)) \cong C^{\ast}(M_0 \times M_0)$. 
Note that the $C^{\ast}$-algebra $C^{\ast}(M_0 \times M_0) \otimes C(0,1]$ is contractible and hence the induced map in $K$-theory
$(e_0)_{\ast} \colon K_0(C^{\ast}(\G_{\F})) \to K_0(C^{\ast}(\Tau))$ is invertible.
Setting $\ind_{\F} := (e_1)_{\ast} \circ (e_0)_{\ast}^{-1} \colon K_0(C^{\ast}(\Tau)) \to \Zz$ we have that for any fully elliptic pseudodifferential
operator $P \in \Psi_{\V}^{0}(M; W)$ the Fredholm index equals $\ind(P) = \ind_{\F}(P)$. We refer to \cite{CLM}. 
By the exact sequence \eqref{FhSES} the Fredholm index only depends on the full symbol of the given operator. We can therefore
consider the $K$-theory class in $K_0(C^{\ast}(\Tau))$. Given a first order spin Dirac operator $D = D^S$ with full symbol $a = (\sigma \oplus \R)(D)$
we can associate a class $[a] \in K_0(C^{\ast}(\Tau))$ as follows. Apply the order reduction which is contained in the completion of the Lie calculus $\overline{\Psi}_{\V}^1(M; S)$ as defined in \cite{N}
to the operator $D$ to obtain a zero order operator $\tilde{D}$. The class $[a] \in K_0(C^{\ast}(\Tau))$ is defined as the $K$-theory class of
the full symbol $(\sigma \oplus \R)(\tilde{D})$. 

We introduce next a secondary invariant which encodes information in $K$-theory about the structure of a given compatible metric of positive scalar curvature.
See also \cite{Z} where secondary invariants are introduced that control the vanishing of the generalized index defined via the adiabatic groupoid, instead of the Fredholm index defined via the Fredholm groupoid.
Consider first the following general setup: Let $A$, $B$ be separable $C^{\ast}$-algebras. We work inside the category $KK$ which consists 
of separable $C^{\ast}$-algebras as objects and elements of $KK(A, B)$ as arrows between objects. We denote these arrows by $A \dasharrow B$. The composition is given by the Kasparov product.
For a given \emph{connecting morphism} $\partial \in KK(A, B)$, then there is a $C^{\ast}$-algebra $A'$, a $\ast$-homomorphism $\varphi \colon A' \to A$ which is a $KK$-equivalence and a $\ast$-homomorphism $\psi \colon A' \to B$.
The situation is summarized in terms of the diagram:
\[
\xymatrix{
A' \ar@{-->}[d]_-{[\varphi]} \ar@{-->}[dr]^-{[\psi]} & \\
A \ar@{-->}[r]_-{\partial} & B.
}
\]

We can therefore write $\partial = [\varphi]^{-1} \otimes_{A'} [\psi]$, i.e. in terms of the Kasparov product of the corresponding classes in $KK$.
Fix the mapping cone of $\psi$  
\[
C_{\psi}(A', B) = \{a \oplus f : f(0) = \psi(a)\} \subset A' \oplus C([0,1), B). 
\]

Set $E := C_{\psi}(A', B)$ and $S := C_0(0,1)$ for brevity. Then $\partial$ is up to $KK$-equivalence the boundary map associated to the exact sequence.
\[
\xymatrix{
B \otimes S \ar@{>->}[r] & E \ar@{->>}[r]^-{q} & A'.
}
\]

If $[a] \in K_0(A')$, then $\partial[a] \in K_1(S \otimes B)$ is the \emph{primary invariant} whose vanishing is controlled by 
the \emph{secondary invariant} $\rho(a) \in K_0(E)$. Apply the short exact sequence in $K$-theory to obtain
\[
\xymatrix{
K_0(S \otimes B) \ar[r] & K_0(E) \ar@{->>}[r] & K_0(A') \ar[d]_{\partial = 0} \\
K_1(A') \ar[u] & \ar[l] K_1(E) & \ar[l] K_1(S \otimes B).
}
\] 

Hence we define $\rho(a)$ as the lift of $[a]$ to $K_0(E)$.

To apply the above construction of the class $\rho(a)$ to our situation we once again consider the Fredholm groupoid as previously introduced.
Also we specialize to the case $A' = C^{\ast}(\G_{\F}) ,\ B = \K, \ A = C^{\ast}(\Tau), \ E = C_{e_1}(C_r^{\ast}(\G_{\F}), C_r^{\ast}(M_0 \times M_0))$. 
Note that by the previous discussions $e_0 = \varphi$ induces a $KK$-equivalence and our primary invariant can be written in terms of the Kasparov product
\[
\ind_{\F} = [a] \otimes [e_0]^{-1} \otimes [e_1] \in K_0(\K) \cong \Zz. 
\]

Here $[a] \in K_0(C^{\ast}(\Tau)) \cong KK(\Cc, C^{\ast}(\Tau))$ is the class of the full symbol. Setting $\mathring{\G}_{\F} := \G_{\F|[0,1)}$ the short exact sequence in our case becomes
\[
\xymatrix{
C_r^{\ast}(M_0 \times M_0) \otimes C_0(0,1) \ar@{>->}[r] & C_r^{\ast}(\mathring{\G}_{\F}) \ar@{->>}[r]^{e_0} & C_r^{\ast}(\Tau). 
}
\]

By the invariance of the subgroupoid $M_0 \times M_0 \times \{1\}$ we have $K_0(C_{e_1}(C_r^{\ast}(\G_{\F}), C_r^{\ast}(M_0 \times M_0)) \cong K_0(C_r^{\ast}(\mathring{\G}_{\F}))$. 
We are now in a position to define the secondary invariant associated to compatible positive scalar curvature metrics on Lie manifolds (see also \cite{Schick}).

\begin{Def}
Given a compatible metric of positive scalar curvature $g$ the associated \emph{structure class} $\rho(M, g)$ is defined as the lifting of the full symbol of the spin Dirac operator $D = D_g$, i.e. $\rho(M, g) \in K_0(C_r^{\ast}(\mathring{\G}_{\F}))$. 
\label{Def:struct}
\end{Def}

With this $K$-theory class at hand we can give an alternative proof of Corollary \ref{Cor:obstruction} using $K$-theory.
\begin{proof}
Fix the positive scalar curvature metric $g$. Then in the six term exact sequence in $K$-theory 
\[
\xymatrix{
K_0(C_r^{\ast}(\mathring{\G}_{\F})) \ar[r] & K_0(C_r^{\ast}(\Tau)) \ar[r]^-{\partial} & K_1(\K \otimes C_0(0,1)) 
}
\]
we have $\rho(M, g) \mapsto [a] \mapsto \ind_{\F}(a) = 0.$ 
\end{proof}

The secondary invariant $\rho(M, g)$ contains information about the compatible metric of positive scalar curvature. Since the groupoids under consideration here are in particular Hausdorff and
amenable, a Baum-Connes type isomorphism furnishes the computability of the invariant. We refer to \cite{Tu} for the general proof of the Baum-Connes conjecture for amenable Hausdorff groupoids.
For concrete realizations of the Baum-Connes map and calculations of the corresponding $K$-theory of the $C^{\ast}$-algebras, in the special case of the $b$-type Lie structure, we refer to the recent work of Lescure and Carillo Rouse \cite{CL}. 


%

\section*{Acknowledgements}

The first author was supported through the programme \emph{Oberwolfach Leibniz Fellows} by  Mathematisches Forschungsinstitut Oberwolfach in 2015. 
We thank Paulo Carrillo Rouse, Jean-Marie Lescure, Victor Nistor and Bing Kwan So for useful discussions.

\small

\end{document}